\documentclass[11pt]{amsart}
\usepackage{amsfonts,latexsym,amsthm,amssymb,amsmath,amscd,euscript,tikz, tikz-cd}
\usepackage[alphabetic, msc-links, bibtex-style, nobysame]{amsrefs}
\usepackage{stackengine}
\usepackage{framed}
\usepackage{xfrac}
\usepackage[makeroom]{cancel}
\usepackage{faktor}
\usepackage{braket}
\usepackage{pgf,tikz,pgfplots}
\pgfplotsset{compat=1.17}
\usepgfplotslibrary{fillbetween} 
\usepackage{mathrsfs}
\usetikzlibrary{arrows}
\definecolor{wrwrwr}{rgb}{0.3803921568627451,0.3803921568627451,0.3803921568627451}
\definecolor{rvwvcq}{rgb}{0.08235294117647059,0.396078431372549,0.7529411764705882}
\definecolor{mblue}{rgb}{0.2, 0.3, 0.8}
\definecolor{morange}{rgb}{1, 0.5, 0}
\definecolor{mgreen}{rgb}{0.1, 0.4, 0.2}
\definecolor{mred}{rgb}{0.5, 0, 0}
\definecolor{ForestGreen}{RGB}{34,139,34}
\usepackage{hyperref}
\hypersetup{colorlinks=true,citecolor=ForestGreen,linkcolor = blue,urlcolor =black,linkbordercolor={1 0 0}}
\usepackage{float}

\usepackage{lmodern}
\usepackage{supertabular}
\usepackage{verbatim}
\usepackage{amssymb}
\usepackage{enumerate}
\usepackage{stmaryrd}
\usepackage{bbm}
\usepackage{mathtools}
\usepackage[nopatch=footnote]{microtype}
\usepackage{setspace}
\usepackage{tikz,tikz-cd}
\usetikzlibrary{matrix,calc,positioning,arrows,decorations.pathreplacing,patterns,knots}

\newcommand{\la}{\langle}
\newcommand{\rg}{\rangle}

\newcommand{\smallmres}{\mathbin{\vrule height 0.8ex depth 0pt width
0.065ex\vrule height 0.065ex depth 0pt width 0.65ex}}
\newcommand{\mres}{\mathbin{\vrule height 1.6ex depth 0pt width
0.13ex\vrule height 0.13ex depth 0pt width 1.3ex}}

\newtheorem{theorem}{{Theorem}}[section]
\newtheorem*{theorem*}{Theorem}
\newtheorem{lemma}[theorem]{Lemma}
\newtheorem{proposition}[theorem]{Proposition}

\newtheorem*{corollary*}{Corollary}

\theoremstyle{definition}
\newtheorem{definition}{Definition}

\newtheorem{thm}{Theorem}
\newtheorem{cor}{Corollary}

\usepackage{lmodern,url,enumerate,mathtools,microtype}
\usepackage[hmargin = 1in,vmargin=1in]{geometry}
\usepackage{graphicx}
\usepackage{subcaption}

\newcommand{\ve}{\varepsilon}

\newcommand{\mr}[1]{{\rm #1}}
\newcommand{\cB}{\mathcal{B}}

\newcommand{\cF}{\mathcal{F}}
\newcommand{\cG}{\mathcal{G}}\newcommand{\cH}{\mathcal{H}}
\newcommand{\cJ}{\mathcal{J}}
\newcommand{\cL}{\mathcal{L}}

\newcommand{\cP}{\mathcal{P}}
\newcommand{\cR}{\mathcal{R}}

\newcommand{\bG}{\mathbb{G}}

\newcommand{\bN}{\mathbb{N}}

\newcommand{\bR}{\mathbb{R}}
\newcommand{\bS}{\mathbb{S}}

\newcommand{\nc}{\newcommand}

\nc{\on}{\operatorname}
\nc{\p}{\partial}
\nc{\ol}{\overline}
\nc{\ul}{\underline}
\nc{\pa}{\partial}

\nc{\pb}{\partial_b}
\nc{\pc}{\partial_c}
\nc{\pd}{\partial_d}
\nc{\pe}{\partial_e}
\nc{\pf}{\partial_f}
\nc{\pg}{\partial_g}
\nc{\ph}{\partial_h}
\nc{\pari}{\partial_i}
\nc{\pj}{\partial_j}
\nc{\pk}{\partial_k}
\nc{\pl}{\partial_l}
\nc{\pell}{\partial_\ell}
\nc{\parm}{\partial_m}
\nc{\pn}{\partial_n}
\nc{\po}{\partial_o}
\nc{\pp}{\partial_p}
\nc{\pq}{\partial_q}
\nc{\pr}{\partial_r}
\nc{\ps}{\partial_s}
\nc{\pt}{\partial_t}
\nc{\pu}{\partial_u}
\nc{\pv}{\partial_v}
\nc{\pw}{\partial_w}
\nc{\px}{\partial_x}
\nc{\py}{\partial_x}
\nc{\pz}{\partial_z}

\nc{\Spec}{\on{Spec}}

\nc{\sn}{\mr{sn}}
\nc{\cn}{\mr{cn}}
\nc{\dn}{\mr{dn}}

\allowdisplaybreaks

\newcommand{\@dotsep}{4.5}

\def\@tocline#1#2#3#4#5#6#7{%
  \relax
  \ifnum #1>\c@tocdepth
  \else
    \par \addpenalty\@secpenalty \addvspace{#2}%
    \begingroup
      \hyphenpenalty\@M
      \@ifempty{#4}{%
        \@tempdima\csname r@tocindent\number#1\endcsname\relax
      }{%
        \@tempdima#4\relax
      }%
      \parindent\z@
      \leftskip#3\relax
      \advance\leftskip\@tempdima\relax
      \rightskip\@pnumwidth plus4em
      \parfillskip-\@pnumwidth
      #5\leavevmode\hskip-\@tempdima #6\nobreak
      \leaders\hbox{$\m@th
        \mkern \@dotsep mu.\mkern \@dotsep mu$}\hfill
      \nobreak\hbox to\@pnumwidth{\@tocpagenum{#7}}\par
    \endgroup
  \fi
}

\renewcommand{\l@section}{\@tocline{1}{2pt}{0pt}{}{\bfseries}}
\renewcommand{\l@subsection}{\@tocline{2}{0pt}{1.5em}{3.2em}{}}
\renewcommand{\l@subsubsection}{\@tocline{3}{0pt}{4.7em}{3.8em}{}}

\makeatother

\title[The sharp isoperimetric inequality for varifolds in codimension two]{The sharp isoperimetric inequality for varifolds in codimension two}
\date{\today}

\author[Raphael Tsiamis]{Raphael Tsiamis}
\address{Department of Mathematics, Columbia University
\newline {\href{mailto:r.tsiamis@columbia.edu}{r.tsiamis@columbia.edu}}}

\begin{document}

\begin{abstract}
We prove a Michael-Simon inequality for a class of varifolds in Euclidean space that includes all integral varifolds with locally bounded first variation of arbitrary dimension and codimension.
In codimension $2$, this inequality is sharp and implies the sharp isoperimetric inequality for varifolds, which attains equality precisely for the round disk.
Our proof uses the optimal transportation approach of Brendle-Eichmair.
\end{abstract}

\vspace*{0.2in}
\maketitle

%\vspace{-0.2in}

\section{Introduction}

The isoperimetric inequality is one of the most fundamental geometric inequalities and has played an important role in the development of geometric measure theory.
In his groundbreaking work, Brendle~\cite{brendle-sharp-isoperimetric} proved a sharp Michael-Simon inequality for submanifolds in Euclidean space in codimension at most two, and in particular obtained the sharp isoperimetric inequality for minimal submanifolds.
Brendle's approach utilized ideas from optimal transportation, developed further in important work with Eichmair~\cite{brendle-eichmair}.
Optimal transportation has also played an important role in the modern theory of isoperimetric inequalities more broadly, including sharp quantitative forms of the anisotropic isoperimetric inequality due to Figalli, Maggi and Pratelli~\cite{figalli-maggi-pratelli}.

A central feature of geometric measure theory is that meaningful forms of the isoperimetric inequality continue to hold for weak notions of surfaces.
In their foundational work on normal and integral currents, Federer and Fleming~\cite{federer-fleming} established an isoperimetric inequality for integral currents, which became one of the basic tools in the compactness theory and the solution of Plateau-type problems. Almgren subsequently proved the optimal Euclidean isoperimetric inequality for integral currents~\cite{almgren-isoperimetric}, with the sharp Euclidean constant and the corresponding rigidity statement: equality is attained by the round sphere and its flat disk filling. In this setting, the isoperimetric inequality is naturally formulated as a filling inequality, relating the mass of a cycle to the least mass of a current which it bounds.

For varifolds, the natural replacement of the boundary is the distributional first variation.
Allard's first variation theory~\cite{allard-first-variation} includes a Sobolev and isoperimetric inequality for rectifiable varifolds with density bounded from below.
The closely related Michael-Simon inequality~\cite{michael-simon} gives a dimensionally natural Sobolev inequality in terms of the mean curvature and, in the smooth setting, the boundary of the submanifold. 
Our first main result is the following.
\begin{thm}\label{thm:varifold-isoperimetric}
    Let $n \geq 2$ and $m \geq 2$ be integers.
    Let $V$ be a compactly supported integral $n$-varifold with locally bounded first variation. 
    Then, it holds that
    \[
    n \Bigl( \frac{(n+m) \, |B^{n+m}|}{m |B^m|} \Bigr)^{\frac{1}{n}} \| V\| (\bR^{n+m})^{\frac{n-1}{n}} \leq \| \delta V\| (\bR^{n+m}).
    \]
\end{thm}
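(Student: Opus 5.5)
Following Brendle's optimal transport proof for submanifolds, as refined by Brendle--Eichmair, I would adapt each step to the varifold setting. Write $\|\delta V\|$ for the total variation measure. By the Radon--Nikodym decomposition, $\delta V = -H\,\|V\| + \nu\,\sigma$, where $H$ is the generalized mean curvature, $\sigma$ is singular with respect to $\|V\|$, and $|\nu| = 1$. Then $\|\delta V\|(\bR^{n+m}) = \int |H|\,d\|V\| + \sigma(\bR^{n+m})$. Since $V$ is integral, $\|V\| = \theta\,\mathcal{H}^n \mres M$ with $M$ countably $n$-rectifiable and $\theta \geq 1$ integer valued. I would first prove a Michael--Simon type inequality
\[
\int \sqrt{|\nabla f|^2 + f^2|H|^2}\,d\|V\| + \int f\,d\sigma \;\geq\; n \Bigl( \frac{(n+m)|B^{n+m}|}{m|B^m|}\Bigr)^{1/n} \Bigl(\int f^{\frac{n}{n-1}}\,d\|V\|\Bigr)^{\frac{n-1}{n}}
\]
for positive smooth $f$. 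Taking $f \equiv 1$ then gives the theorem directly.

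\textbf{Step 1: the PDE.} By scaling, normalize so that $\int |H|\,d\|V\| + \sigma = n\|V\|(\bR^{n+m})$. In the smooth case one solves $\operatorname{div}_M \nabla u = n - |H|$ with a Neumann condition on the boundary. Here I would instead seek $u$ satisfying the weak identity
\[
\int \langle \nabla^M u, \nabla^M \varphi\rangle\, d\|V\| = \int \varphi\,(|H| - n)\,d\|V\| + \int \varphi\, d\sigma
\]
for all test functions $\varphi$. I would get it by minimizing the corresponding Dirichlet-type functional over $W^{1,2}(\|V\|)$, using the compatibility condition guaranteed by the normalization. For the Poincaré inequality and compactness I would rely on the Michael--Simon/Allard Sobolev inequality.

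Regularity is the main obstacle, since $u$ need not be $C^2$. I would sidestep it by working, as Brendle does, only on the contact set
\[
A = \{\bar x \in M : u(x) + \langle \nabla u(\bar x), x - \bar x\rangle \geq u(\bar x) \ \text{for all } x \in \operatorname{spt}\|V\|\}.
\]
Either I regularize $u$ by replacing $V$ and the data with approximations, or I invoke the class of varifolds alluded to in the abstract, which is presumably one where such a weak solution with a usable second-order structure exists almost everywhere. Either way the integrality ($\theta \geq 1$) is what lets the density-weighted computations go through.

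\textbf{Step 2: the transport map.} Define $\Phi(x,y) = \nabla u(x) + y$ for $x \in A$ and $y \perp T_xM$ with $|\nabla u(x)|^2 + |y|^2 < 1$. The key claim is surjectivity: every $\xi \in B^{n+m}$ is hit. Given $\xi$, minimize $u(x) - \langle x, \xi\rangle$ over $\operatorname{spt}\|V\|$. At an interior minimizer one gets $\nabla u = \xi^{T}$ and $x \in A$. The boundary case is excluded because the singular part $\sigma$ acts as the Neumann term, forcing $\langle \nabla u, \nu\rangle \geq 1$ on the support of $\sigma$. This needs a first-variation argument showing that the minimum cannot be achieved in the support of $\sigma$, or at least that the set of such $\xi$ has measure zero.

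\textbf{Step 3: area formula and the codimension-two computation.} At contact points $D^2 u(x) - \langle \mathrm{II}(x), y\rangle \geq 0$. Combining AM--GM with $\Delta u \leq n - |H|$ (and $|\langle H, y\rangle| \leq |H|$) bounds the Jacobian by $\bigl(1 - \tfrac{|H|}{n} - \dots\bigr)^n$-type expressions. Following Brendle's sharp estimate, one restricts to $|y|$ in the annulus $\sigma$-scale and lets the outer radius $r \to \infty$. This gives $|B^{n+m}|(1 - \sigma^{-2})^{\dots} \leq \int_A \int \det D\Phi$, which in the limit yields $(n+m)|B^{n+m}| \leq m|B^m|\,\|V\|(\bR^{n+m})$-type bounds, sharp exactly for $m = 2$. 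Multiplicity $\theta \geq 1$ only helps here: each point of $\Phi$'s image is counted at least once while $\|V\|$ weighs with $\theta$. The measure-theoretic version of the area formula on the rectifiable set $M$, with approximate tangent planes and a second fundamental form defined in Hutchinson's curvature-varifold sense or via the equation, is the step I expect to require the most care.
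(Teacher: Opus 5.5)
Your top-level plan (prove a sharp Michael--Simon inequality, then take $f\equiv 1$ and use $\theta\ge 1$) matches the paper. The gap is the engine. You transplant Brendle's PDE/ABP argument, but the steps that are genuinely hard for varifolds are deferred, and as stated Steps 1--2 do not go through.

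\textbf{Existence.} There is no existence theory for your weak Neumann problem. First, $W^{1,2}(\|V\|)$ need not satisfy a Poincar\'e inequality: the Michael--Simon inequality is a Sobolev inequality whose right-hand side contains $\int|\phi|\,d\|\delta V\|$, so it gives no coercivity modulo constants. Second, nothing makes $\varphi\mapsto\int\varphi\,d\sigma$ bounded on $W^{1,2}(\|V\|)$. Third, the compatibility condition would be needed on each indecomposable piece, not just globally.

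\textbf{Regularity.} More seriously, even granting a weak solution, there is no elliptic regularity on an integral varifold. So $\nabla u(\bar x)$, $D^2u$, your contact set $A$, the map $\Phi$ and its Jacobian are simply undefined. Neither proposed fix works. You cannot appeal to a general theorem approximating integral varifolds by smooth submanifolds with convergence of $\|\delta V\|(\bR^{n+m})$. And the class in the abstract is not one where regular solutions exist. It is the class with second-order rectifiability and locality/perpendicularity of the mean curvature: Menne's $C^2$ sheets $M_j$ with $H_{M_j}=H_V$ a.e., together with Brakke's theorem. That is what supplies $\mathrm{I\!I}$ a.e. Hutchinson's curvature varifolds would be an extra hypothesis, which the paper uses only for rigidity.

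\textbf{Surjectivity.} Step 2 also breaks down. There is no pointwise Neumann condition on $\on{spt}\sigma$, and that support can be all of $\on{spt}\|V\|$ (e.g.\ a disk plus densely placed tiny coplanar disks). Moreover, minimizers of $u(x)-\la x,\xi\rg$ may fall in the $\|V\|$-null set where no tangent plane or second-order expansion exists, and nothing controls the measure of the corresponding $\xi$.

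\textbf{What the paper does instead.} It solves no PDE. It takes the Kantorovich potential $U$ for transporting $\mu_f=f^{n/(n-1)}\|V\|$ to $\rho(|\xi|^2)\,d\xi$ on $\bar B^{n+m}$. This $U$ is automatically an ambient convex $1$-Lipschitz function, hence semiconvex with Alexandrov Hessians a.e.\ on each $C^2$ sheet. Mass balance of the optimal plan ($\mu_f(E)\le\nu(G)$ whenever every contact over $E$ lands in $G$) replaces your covering claim. It only has to be checked near good points, where it yields the a.e.\ bound $n\alpha_\rho^{-1/n}\Theta^n(\|V\|,x)^{1/n}f^{n/(n-1)}\le f\Delta_VU+\la\nabla^Vf,\nabla^VU\rg+\sqrt{|\nabla^Vf|^2+|H_V|^2f^2}$.

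\textbf{The missing ingredient.} Your outline lacks the integration-by-parts inequality $\int\varphi\,\Delta_VU\,d\|V\|\le-\int\la\nabla^V\varphi,\nabla^VU\rg\,d\|V\|+\int\varphi\,d|\delta_sV|$ for semiconvex $1$-Lipschitz $U$ (Proposition~\ref{prop:fancy-integration-by-parts}). It is proved by mollification, weak-$*$ limits, and a blow-up of sublevel sets at Alexandrov points, showing that the distributional Laplacian dominates the pointwise one. Because $|DU|\le 1$, the singular first variation enters with coefficient at most one; this is precisely the substitute for the Neumann condition. The sharp constant then comes from letting $\rho_k$ concentrate at $\partial B^{n+m}$, so that $\limsup_k\alpha_{\rho_k}\le m|B^m|/((n+m)|B^{n+m}|)$.

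Separately, your Step 3 conflates Brendle's annulus parameter $\sigma\in[0,1)$ with the singular measure. In the Euclidean argument one integrates over $\{\sigma^2<|\xi|^2<1\}$, divides by $1-\sigma$, and lets $\sigma\to 1$; no limit $r\to\infty$ is involved.
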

Our argument employs an optimal transportation approach inspired by the technique of Brendle and Eichmair~\cite{brendle-eichmair}.
For $m=2$, the recursion formula for the volume of Euclidean balls gives $|B^{n+2}| = \frac{2 \pi}{n+2} |B^n|$, so $\frac{2 |B^2|}{(n+2) |B^{n+2}|} = \frac{1}{|B^n|}$, so the inequality of Theorem~\ref{thm:varifold-isoperimetric} is sharp for varifolds of codimension $2$ in Euclidean space.
In particular, we obtain the following rigidity result.
\begin{cor}\label{cor:sharp-isoperimetric}
Let $V$ be a compactly supported integral $n$-varifold in $\bR^{n+2}$ satisfying
\[
n |B^n|^{\frac{1}{n}} \|V\| ( \bR^{n+2})^{\frac{n-1}{n}} = \| \delta V\| ( \bR^{n+2})
\]
as well one of the following properties:
\begin{enumerate}[(i)]
    \item the tangent plane map $x \mapsto T_x V$ is weakly differentiable on $V$; or
    \item the Radon measure $\cH^{n-1} \mres \textup{sing} \, V$ is absolutely continuous with respect to $\| \delta V\|$.
\end{enumerate}
Then, $V$ is an $n$-dimensional flat round disk.
\end{cor}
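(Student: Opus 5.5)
The rigidity statement will come from tracing the equality case through the optimal transport proof of Theorem~\ref{thm:varifold-isoperimetric}. In the Brendle--Eichmair approach one solves a PDE on $\operatorname{spt}\|V\|$ whose data are $\|\delta V\|$ and $\|V\|$, suitably normalized. One then considers the transport map $\Phi(x,y) = \nabla u(x) + y$, with $y$ ranging over normal vectors of length less than $1$. One proves that $\Phi$ covers $B^{n+2}$ up to a null set, and estimates its Jacobian by the arithmetic--geometric mean inequality, after replacing the Hessian of $u$ by $D^2u$ plus the second fundamental form paired with $y$. The chain of inequalities giving $|B^{n+2}|\,r^{n+2}$ has three sources of slack:
\begin{enumerate}[(a)]
\item the covering step, where $\Phi$ restricted to the contact set covers $B^{n+2}$, possibly with overlap;
\item the AM--GM step, which is an equality only if $D^2 u - \langle \mathrm{II}, y\rangle$ is a multiple of the identity;
\item the normalization step, where the ratio $\|\delta V\|$ versus $|H|$ enters and a singular part of $\|\delta V\|$ (generalized boundary) could be discarded.
\end{enumerate}
Since the final constant is sharp for $m=2$, equality in the corollary forces equality at each step for a.e.\ relevant $(x,y)$.

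First, I would show that equality forces the first variation to be carried by a boundary measure, with vanishing generalized mean curvature. Inspecting (c), the contribution of the absolutely continuous part $H\,d\|V\|$ produces strict slack unless $H=0$ a.e. Then, from (b) with $H=0$ and $y$ free in a two-dimensional normal disk, the linear dependence on $y$ forces $\mathrm{II}=0$ and $D^2u = \lambda\,\mathrm{Id}$ on the contact set. Here hypothesis (i) or (ii) is needed. Under (i), the tangent map $x\mapsto T_xV$ has a weak derivative that represents the second fundamental form in the sense of Hutchinson, so $\mathrm{II}=0$ a.e.\ gives that $T_xV$ is locally constant. Since the transport argument also uses connectedness of the support (via the equality), $\|V\|$ lies in a single affine $n$-plane. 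Under (ii), $\mathcal H^{n-1}\mres \operatorname{sing}V \ll \|\delta V\|$ lets me rule out codimension-one singular sets that the weak-derivative argument cannot see. Where $\|\delta V\|$ charges such a set, the singular part behaves like a boundary and is handled by the boundary term in the equality analysis. Elsewhere, the varifold is regular enough for the flatness argument to apply.

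Once $V$ is supported in an $n$-plane $P\cong\bR^n$, the constancy theorem writes $V$ as an integer-valued BV density $\theta$ on $P$. Then $\|\delta V\|(\bR^{n+2}) \ge |D\theta|(P)$. I would check this carefully, since tangential first variation equals $|D\theta|$, and $\|V\|(P)=\int\theta$. The layer-cake/coarea decomposition $\theta=\sum_k \mathbf 1_{E_k}$, combined with the Euclidean isoperimetric inequality for each $E_k$ and strict concavity of $t\mapsto t^{(n-1)/n}$, gives equality only if $\theta=\mathbf 1_E$ for a single set $E$ that is itself a ball. This is the equality case of De Giorgi's isoperimetric inequality. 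Hence $V$ is a flat round disk.

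The main obstacle is the passage from pointwise equality in the transport inequalities, which holds only on the contact set and a.e., to the global conclusion $\mathrm{II}=0$. This requires knowing that the contact set has full $\|V\|$-measure, and that the weak second fundamental form makes sense. That is precisely why (i) or (ii) is assumed. I would first try to show that $\Phi$ being an a.e.\ bijection with equality forces $\nabla u$ to be affine with $D^2u=\frac1n\,\mathrm{Id}$ (suitably scaled) on all of the support. This would make $\nabla u$ determine an injective map onto a ball, after which flatness follows.
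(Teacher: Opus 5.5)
There are two genuine gaps, and the second concerns the role of hypotheses (i) and (ii).

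\textbf{No single transport potential attains equality.} Equality in the corollary does not force equality in any single transport argument. Take $m=2$ and a fixed admissible density $\rho$. Setting $z=0$ in the definition of $\alpha_\rho$ gives $\alpha_\rho\ge\pi\int_0^1\rho(t)\,dt>\pi\int_0^1\rho(t)\,t^{n/2}\,dt=\frac{2\pi}{(n+2)|B^{n+2}|}=|B^n|^{-1}$. So every individual optimal potential yields a strictly non-sharp inequality, and ``equality at each step'' is false.

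There is also no PDE you can solve on a general varifold. The ambient convex potential comes from Kantorovich duality, and contact over $\|V\|$-a.e.\ point is then automatic from the marginal condition; it is not something (i) or (ii) supplies.

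What you actually need is a limiting argument:
\begin{itemize}
\item Take optimal pairs $(U_k,h_k)$ for densities $\rho_k$ concentrating on $\partial\bar B^{n+2}$, and pass to a convex $1$-Lipschitz limit $U$.
\item Convert $\int(\Delta_VU_k+|H_V|-n\alpha_{\rho_k}^{-1/n})\,d\|V\|\to0$ into pointwise information via subsequential limits of the Alexandrov Hessians $D^2_VU_k(x)$.
\item The AM--GM rigidity then holds only on the support of the limiting fibre measure, which is the uniform measure on the circle $\{y\in T_x^\perp V:|y|=\sqrt{1-|\nabla^VU(x)|^2}\}$.
\end{itemize}
To extract $H_V=0$ and $\mathrm{I\!I}_V=0$ (using $y\mapsto-y$), this circle must have positive radius, i.e.\ $|\nabla^VU|<1$ $\|V\|$-a.e. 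That needs its own argument. On a compact piece $K$ where $\nabla^VU$ is Lipschitz and $|\nabla^VU|=1$, the image is an $n$-rectifiable subset of $\bS^{n+1}$, hence $\cH^{n+1}$-null since $m\ge2$. The transport comparison then gives $\|V\|(K)\le\nu_k(G_k)\to0$. Without this step, ``linear dependence on $y$'' yields nothing.

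\textbf{The role of (i) and (ii).} The identities $H_V=0$, $\mathrm{I\!I}_V=0$ and $D^2_VU=|B^n|^{1/n}\,\mathrm{id}$ hold $\|V\|$-a.e.\ with no extra hypothesis. Menne's $C^2$-rectifiability already provides the second fundamental form a.e.\ for any integral varifold with locally bounded first variation. Hypotheses (i) and (ii) are used only to pass from a.e.\ flatness of $C^2$ sheets to global flatness, i.e.\ to exclude flat pieces glued along $(n-1)$-dimensional creases or junctions.

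Under (i) your argument matches the paper's, except that connectedness of the support does not suffice for the constancy theorem. Two flat disks in $\bR^4$ crossing transversally at their centres have connected support and a tangent map that is $V$-weakly differentiable with zero derivative, yet $T_xV$ takes two values. You need indecomposability in Menne's sense. It follows from equality via Menne's decomposition theorem and strict concavity of $t\mapsto t^{(n-1)/n}$.

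Under (ii) what you wrote is not yet an argument. The hypothesis does not rule out codimension-one singular sets; it only forces $\|\delta V\|$ to charge them. As a result the regular components are flat Caccioppoli pieces $M_j\subset\Pi_j$. You must still exclude several pieces meeting along common boundary with partially cancelling conormals. The isoperimetric inequality on each piece cannot do this, because $\|\delta V\|\le\sum_j\cH^{n-1}(\partial^*M_j)$ runs the wrong way.

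The paper resolves this with the limit potential:
\begin{itemize}
\item $\partial U(x)=\{\zeta(x)\}$ for $|\delta_sV|$-a.e.\ $x$, where $\delta_sV=\zeta\,|\delta_sV|$, so $\nabla^VU(x_i)\to\zeta(x_0)$ along good points $x_i\to x_0$.
\item Combined with $\nabla^VU=|B^n|^{1/n}(x-p_j)$ on $M_j$, this forces every conormal to equal $\zeta$.
\item The boundary measures are then mutually singular and give a decomposition of $V$, and indecomposability leaves a single piece.
\end{itemize}

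Once $V$ is known to lie in one plane, your layer-cake argument with the equality case of De Giorgi's isoperimetric inequality is a valid and more elementary ending than the paper's. The paper's ending uses $\nabla^VU=|B^n|^{1/n}(x-p)$, $|\nabla^VU|<1$ and a volume count, whereas yours does not even need $\Theta^n(\|V\|,\cdot)=1$. In case (ii), however, reaching a single plane is exactly the missing step.
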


Theorem~\ref{thm:varifold-isoperimetric} is obtained by establishing a Michael-Simon inequality for integral varifolds, which is presented in Theorem~\ref{thm:optimal-MSS} and is sharp in codimension up to $2$.
In particular, these results imply the sharp isoperimetric inequality obtained by Brendle~\cites{brendle-sharp-isoperimetric , brendle-eichmair } when $\Sigma^n \subset \bR^{n+m}$ is a smooth compact submanifold with boundary and $V = |\Sigma|$.
In this case, $|V| = \cH^n \mres \Sigma$ has singular first variation $|\delta_s V| = \cH^{n-1} \mres \partial \Sigma$, so $\| \delta V\| = |H| \, \cH^n \mres \Sigma + \cH^{n-1} \mres \partial \Sigma$ and Theorem~\ref{thm:varifold-isoperimetric} becomes
\[
n \Bigl( \frac{(n+m) |B^{n+m}|}{m |B^m|} \Bigr)^{\frac{1}{n}} |\Sigma|^{\frac{n-1}{n}} \leq |\partial \Sigma| + \int_{\Sigma} |H|.
\]
Corollary~\ref{cor:sharp-isoperimetric} establishes the rigidity of the sharp isoperimetric inequality under natural measure-theoretic conditions.
The property $(i)$ means that $V$ is a curvature varifold with boundary, a class introduced by Hutchinson and Mantegazza~\cites{hutchinson , mantegazza} to study curvature-dependent variational problems, with compactness properties analogous to Allard's theorem.
The condition $(ii)$ allows arbitrary singular behavior detected by the first variation, and is also preserved along convergent sequences of varifolds with uniform $L^p$ bounds on the distributional mean curvature and a uniform positive lower bound on the $(n-1)$-dimensional density of $\delta V$ at non-smooth points.

The Michael-Simon inequality for varifolds is a fundamental tool throughout geometric analysis, providing the natural replacement for obtaining lower density bounds and compactness, regularity, and rectifiability results when a monotonicity formula is not available~\cites{dephilippis-pigati , anisotropic-michael-simon , anisotropic-surfaces , anisotropic-allard }.
The techniques presented here, with small modifications, can be used to obtain the sharp Wulff inequality for varifolds and characterize its equality case~\cite{du-yi-zhao}.
Previously, De Rosa-Kolasi\'nski-Santilli~\cite{derosa-kolasinski-santilli} characterized the equality case for the codimension-one inequality among finite perimeter sets, proving a conjecture of Maggi.
After this manuscript had been completed, the interesting work of Han-Xu~\cite{optimal-transport-codim-4} presented an alternative approach to obtain a sharp isoperimetric inequality and a sharp Wulff inequality in codimension up to four.
Independently, Gaudet~\cite{gaudet} recently obtained related inequalities, using alternative methods, under the additional assumption of weak twice differentiability and $L^2_{\textup{loc}}$ mean curvature.
We also highlight earlier work of Menne and Scharrer~\cites{menne-isoperimetric , menne-scharrer}, who established isoperimetric and Sobolev inequalities in a general varifold framework, including weak surfaces exhibiting diffuse behavior.

\subsection{Strategy of the proof}
We obtain Theorem~\ref{thm:varifold-isoperimetric} through a more general Michael-Simon inequality, established as Theorem~\ref{prop:fancy-integration-by-parts}.
To prove this result, we will formulate an optimal transportation scheme for Lipschitz functions on varifolds, following the approach of Brendle and Eichmair~\cites{brendle-eichmair , notices }.
Many of the techniques in their work transfer to our setting, with the important exception of the integration-by-parts step used to transfer an a.e.~ inequality for the Laplacian, obtained in Proposition~\ref{prop:laplacian-inequality}, to an integral inequality.
Significant additional care is required for this step, and we establish the necessary integration-by-parts inequality for semiconvex functions on $C^2$ rectifiable varifolds in Section~\ref{section:semiconvex}.
The resulting Proposition~\ref{prop:fancy-integration-by-parts} has independent interest and relies on a blowup technique for sub-level sets of convex functions inspired by the beautiful works of Collins-Tong~\cite{collins-tong} and Colding-Minicozzi~\cite{confined-in-space}.

Finally, in Section~\ref{section:rigidity} we analyze the equality case of the codimension-$2$ isoperimetric inequality for varifolds.
An important novelty from other related results, including the original argument of Brendle for submanifolds~\cite{brendle-sharp-isoperimetric}, is that we establish properties for the full subdifferential of the optimal transport potential, which we use to identify the varifold with a round $n$-disk.

\smallskip \noindent \textbf{Acknowledgments.}
I am very grateful to my advisor, Simon Brendle, for suggesting this problem, for many stimulating discussions, and for his invaluable guidance and continued support. 
I am also thankful to Antonio De Rosa, Benjy Firester, and Jonas Hirsch for helpful conversations.
This work was supported in part by the A.G.~Leventis Foundation Scholarship and the Onassis Foundation Scholarship.

\section{Semiconvex functions on \texorpdfstring{$C^2$}{ C\^2} rectifiable varifolds}\label{section:semiconvex}

Let $V$ be an integral $n$-varifold in $\bR^{n+m}$ with locally bounded first variation, so $\|V\|$-a.e.~ point $x$ admits an approximate tangent plane $T_x V$.
For a function $U(x)$ that is differentiable at $x$, we write $\nabla^V U(x) := \pi_{T_x V} DU(x)$ for the tangential gradient along $T_x V$.
Equivalently, if $x$ lies on a $C^1$ sheet $M$ representing $V$ with $T_x M = T_x V$, then $\nabla^V U(x)$ agrees with the tangential derivative of $U|_M$ at $x$ whenever the latter exists.
By Rademacher's theorem and rectifiability, every Lipschitz function $U$ has an approximate tangential derivative at $\|V\|$-a.e.~ point.
Because the first variation $\delta V$ is locally bounded, it is a vector-valued Radon measure with Lebesgue decomposition
\begin{equation}\label{eqn:HV-decomposition}
\delta V = - H_V \| V\| + \delta_s V, \qquad H_V \in L^1_{\textup{loc}}( \|V \| ; \bR^{n+m}), \qquad \delta_s V \perp \|V \|.
\end{equation}
The vector $H_V$ is the Radon-Nikodym derivative of the absolutely continuous part of $- \delta V$ with respect to $\| V \|$.
We call $H_V$ the generalized mean curvature of $V$, and $\delta_s V$ the singular first variation, so smooth submanifolds without boundary satisfy $\delta V(X) = - \int \la H,X \rg \, d \cH^n$ for $X$ any vector field.

We will formulate an optimal transportation scheme on the varifold $V$, in the style of Brendle-Eichmair~\cite{brendle-eichmair}.
An essential ingredient of our arguments will be a structural result of Menne~\cite{menne-second-order}*{Theorem 1}, combined with Brakke's orthogonality theorem~\cite{brakke}*{Chapter 5}, which shows that integral varifolds can be covered by countably many $C^2$ sheets whose mean curvature represents the distributional mean curvature $\|V\|$-a.e.
The subsequent discussion applies more generally to all varifolds satisfying these perpendicularity and locality properties for the mean curvature, including codimension-one varifolds with locally bounded anisotropic mean curvature and finite support measure, by Kolasi\'nski-Santilli~\cites{kolasinski-santilli , perpendicularity-and-locality }.

\begin{proposition}\label{prop:menne}
Let $V$ be an integral $n$-varifold with locally bounded first variation in an open set $\Omega \subset \bR^{n+m}$ and let $U : \bR^{n+m} \to \bR$ be a locally semiconvex, locally Lipschitz map.
Then, there exist countably many $n$-dimensional submanifolds $M_j \subset \Omega$ of class $C^2$, pairwise disjoint Borel sets $\cB_j \subset M_j \cap \textup{spt} \, \|V\|$, and a Borel function $\theta: \bigcup_j \cB_j \to \bN^*$, which satisfy 
\[
{ \textstyle \| V\| \mres \bigcup_j \cB_j = \|V\|, \qquad \|V\| \bigl( \Omega \setminus \bigcup_j \cB_j \bigr) = 0,} \qquad \|V\| \mres B_j = \theta \, \cH^n \mres \cB_j.
\]
Moreover, there exists a Borel set $\cG \subset \bigcup_j \cB_j$ with $\|V\| (\Omega \setminus \cG) = 0$ such that the following properties hold at every point $x \in \cG \cap \cB_j$ and every $C^2$ sheet $M_j$ through $x$.
\begin{enumerate}[(i)]
    \item The approximate tangent space $T_x V$ at $x$ agrees with $T_x M_j$, the classical mean curvature vector of $M_j$ at $x$ agrees with $H_V(x)$, and $x$ is a $\|V\|$-Lebesgue point of $H_V$.
    \item For any other sheet $M_i$ through $x$, the classical second fundamental forms $\mathrm{I\!I}_{M_i}(x)$ and $\mathrm{I\!I}_{M_j}(x)$ agree at $x$ and their common trace satisfies $\textup{tr} \, \mathrm{I\!I}_{M_i}(x) = \textup{tr} \, \mathrm{I\!I}_{M_j}(x) = H_V(x)$.
    Moreover, $U|_{M_i}$ and $U|_{M_j}$ both have an Alexandrov Hessian at $x$, and $D^2_{M_i} U(x) = D^2_{M_j} U(x)$.
    \item $x$ is a density point of $\|V\|$, namely $\|V\| \mres M_j$ has $\cH^n$-density equal to $\Theta^n( \|V\|,x)$ at $x$.
\end{enumerate}
\end{proposition}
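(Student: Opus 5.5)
The plan is to assemble the statement from three ingredients: Menne's $C^2$ rectifiability theorem, Brakke's orthogonality theorem, and Alexandrov's theorem on semiconvex functions, applied sheet by sheet.

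\emph{Step 1: the cover.} By Menne~\cite{menne-second-order}*{Theorem 1}, an integral $n$-varifold with locally bounded first variation is $C^2$ rectifiable. So there are countably many $C^2$ submanifolds $M_j \subset \Omega$ with $\|V\|(\Omega \setminus \bigcup_j M_j) = 0$. We set $\cB_1 = M_1 \cap \textup{spt}\,\|V\|$ and $\cB_j = (M_j \cap \textup{spt}\,\|V\|) \setminus \bigcup_{i<j} M_i$. After discarding null sets we may take these to be Borel. Integrality then gives a Borel multiplicity $\theta = \Theta^n(\|V\|,\cdot) \in \bN^*$ with $\|V\| \mres \cB_j = \theta\,\cH^n \mres \cB_j$. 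Property (iii) follows from the Lebesgue differentiation theorem for $\cH^n \mres M_j$: at $\cH^n$-a.e.\ point of $\cB_j$, the density of $\cB_j$ relative to $M_j$ is $1$, and $x$ is a Lebesgue point of $\theta$. Likewise, for every pair $i,j$, at $\cH^n$-a.e.\ $x \in M_i \cap M_j$ the point $x$ is a density point of $M_i \cap M_j$ in both sheets. The tangent identity $T_xV = T_xM_j$ in (i) holds a.e.\ by the locality of approximate tangent planes. Lebesgue points of $H_V \in L^1_{\textup{loc}}(\|V\|)$ have full measure.

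\emph{Step 2: second fundamental forms.} Suppose $x$ is a density point of $M_i \cap M_j$ in both sheets. Then $M_i$ and $M_j$ agree on a set of density one at $x$. Because both are $C^2$, they share the tangent plane at $x$. Writing them as graphs $f_i, f_j$ over $T_xM_j$, the difference $f_i - f_j$ is $C^2$, vanishes on a set of density one at $0$, and has vanishing gradient there. A Taylor expansion evaluated along points of that set in all directions forces $D^2(f_i - f_j)(0) = 0$, so $\mathrm{I\!I}_{M_i}(x) = \mathrm{I\!I}_{M_j}(x)$. The mean curvature statement is the substantive part. Menne's theorem, combined with Brakke's orthogonality theorem~\cite{brakke}*{Chapter 5}, gives the following: $H_V(x)$ is perpendicular to $T_xV$ a.e., and it equals the trace of the approximate second fundamental form, which coincides with $\textup{tr}\,\mathrm{I\!I}_{M_j}(x)$ a.e. on $\cB_j$. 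This is the locality of mean curvature. I will invoke this as a black box, since it is exactly what those references provide. Intersecting the countably many full-measure sets gives (i) and the first half of (ii).

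\emph{Step 3: Alexandrov Hessians.} This is the step I expect to be the main obstacle. $U$ is locally semiconvex and each $M_j$ is $C^2$, so in local $C^2$ graph coordinates $U|_{M_j}$ is locally semiconvex: the composition of a semiconvex function with a $C^2$ map is semiconvex. By Alexandrov's theorem it is therefore twice differentiable $\cH^n$-a.e.\ on $M_j$. For agreement of $D^2_{M_i}U(x)$ and $D^2_{M_j}U(x)$, I will compare $U \circ F_i$ and $U \circ F_j$, where $F_i, F_j$ are parametrizations over the common tangent plane. The two compositions agree on a density-one set. Each has a second-order Taylor expansion at $0$ with error $o(|y|^2)$, so their difference is a quadratic polynomial plus $o(|y|^2)$ vanishing on a set of density one. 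This forces the two quadratic parts to agree. The delicate point is that the \emph{intrinsic} Hessians also involve $DU(x)$ paired with the second fundamental forms. Those terms agree because the $\mathrm{I\!I}$'s already coincide by Step 2. For the claim to make sense, $U$ must be differentiable at $x$; we can arrange this by restricting to points where the Alexandrov expansions exist, since the ambient differential is then determined on the tangent plane, which is all that enters.

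The set $\cG$ is the intersection over all pairs $(i,j)$ of the full-measure sets obtained above, minus a $\|V\|$-null set, which finishes the proof.
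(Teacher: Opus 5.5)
Your proposal is correct and follows essentially the same route as the paper: Menne's $C^2$ rectifiability theorem (with Brakke's perpendicularity) for the cover and for $H_{M_j}=H_V$, disjointification of the sheets, the differentiation theorems for (i) and (iii), and a Taylor expansion at density points of $M_i\cap M_j$ to match both $\mathrm{I\!I}_{M_i}(x)$, $\mathrm{I\!I}_{M_j}(x)$ and the Alexandrov expansions of $U|_{M_i}$, $U|_{M_j}$. You should drop the aside in Step 3: its justification is false, since Alexandrov points of $U|_{M_j}$ need not be points of ambient differentiability of $U$, and the normal part of $DU$ is not determined by the tangent plane. It is also unnecessary, because in graph coordinates over the common tangent plane the Christoffel symbols vanish at the base point, so the equality of quadratic Taylor parts you already proved is exactly the equality $D^2_{M_i}U(x)=D^2_{M_j}U(x)$.
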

\begin{proof}
By Menne's second order rectifiability theorem~\cite{menne-second-order}*{Theorem 1}, there is a countable family of $n$-dimensional $C^2$ submanifolds $M_j\subset\Omega$ which cover $V$ up to a $\|V\|$-null set and satisfy $H_{M_j}=H_V$ for $\|V\|$-a.e. point on each sheet.
Since $V$ is integral, we may write $\|V\|=\theta\,\cH^n\mres\cB$, where $\cB$ is countably $n$-rectifiable and $\theta=\Theta^n(\|V\|,-)\in\bN^*$ a.e., and then disjointify the cover by setting $\cB_j:=(\cB\cap M_j)\setminus\bigcup_{i<j}M_i$.
The Besicovitch differentiation theorem~\cite{maggi}*{Theorem 5.8} gives properties $(i)$ and $(iii)$ after discarding a $\|V\|$-null set.

It remains only to check the compatibility in $(ii)$ on overlaps.
For every pair $i,j$ with $\cH^n(M_i\cap M_j)>0$, almost every point of $M_i\cap M_j$ is a density point of the intersection relative to both sheets.
At such a point $x$, the tangent spaces agree.
Writing both sheets as $C^2$ graphs over this common tangent space, their difference vanishes on a density-one set; its second-order Taylor expansion therefore has vanishing linear and quadratic parts.
Hence $\mathrm{I\!I}_{M_i}(x)=\mathrm{I\!I}_{M_j}(x)$.
Similarly, by Alexandrov's theorem~\cite{villani}*{Theorems 14.1 and 14.25}, the restrictions of $U$ to both sheets admit second-order expansions at almost every such point, and the two expansions agree because the functions coincide on the same density-one set.
Thus $D^2_{M_i}U(x)=D^2_{M_j}U(x)$.
Discarding the resulting countable union of null sets proves the proposition.
\end{proof}

Let $x_0 \in \cB_j$ be a point of $\textup{spt} \, \|V\|$ satisfying the properties of Proposition~\ref{prop:menne}, with $\cB_j \subset M_j$ on a $C^2$ sheet representing $V$ near $x_0$.
We introduce a local coordinate chart near $x_0$.
We can find a positive real number $\rho_0$ and a $C^2$-diffeomorphism $\Xi^{M_j}_{x_0}$ from $\{ v \in T_{x_0} M_j : |v| \leq 2n \rho_0 \}$ onto an open subset of $M_j$, such that $\Xi^{M_j}_{x_0}(0) = x_0$ and $D \Xi^{M_j}_{x_0}(0) = \textup{id}_{T_{x_0} M_j}$.
After shrinking the coordinate chart if necessary, we may assume that $|\Xi^{M_j}_{x_0}(v) - x_0| \leq 2 \, |v|$ for all $v \in T_{x_0} M_j$ with $|v| < \rho_0$.
For $r \in (0, \rho_0)$, we take $\{ e_1, \dots, e_n \}$ to be an orthonormal basis of $T_{x_0} V = T_{x_0} M_j$, and define the cube
\begin{equation}\label{eqn:tangent-cube}
W_r := \{ v \in T_{x_0} M_j : |\la v, e_i \rg| \leq \tfrac{1}{2} r \; \text{ for all } \; i=1,\dots, n \} \subset T_{x_0} M_j
\end{equation}
with image $Q_r := \Xi^{M_j}_{x_0} ( W_r) \subset \{ x \in M_j : d_j(x, x_0) \leq \sqrt{n} \, r \}$ under the chosen $C^2$ diffeomorphism. 
\begin{lemma}\label{lemma:v-ae-q-gj}
    For $\|V\|$-a.e. point $x_0 \in \textup{spt} \, \|V\|$ satisfying properties $(i)$-$(iii)$ of Proposition~\ref{prop:menne}, there is a $C^2$ sheet $M_j$ through $x_0$, a Borel set $\cB_j \subset M_j$, and a $\rho_0>0$ such that for $r \in (0,\rho_0)$,
    \begin{equation}\label{eqn:full-density-point}
    \| V\| ( Q_r \setminus \cB_j) = o(r^n), \qquad \|V\|(Q_r \cap \cB_j) = \theta(x_0 )r^n + o(r^n)
\end{equation}
where $\theta(x_0) := \Theta^n( \|V\|, x_0)$.
In particular, $x_0$ is a density point of $\cB_j$ inside $M_j$.
Moreover,
\begin{equation}\label{eqn:density-approximation}
\lim_{r \downarrow 0} \frac{1}{r^n} \int_{Q_r \cap \cB_j} | \Theta^n( \|V\|, x) - \Theta^n (\|V\|, x_0) | \, d \cH^n (x) = 0.
\end{equation}
In particular, $x_0$ is an $\cH^n$-density point of $\cB_j$ relative to $M_j$ and an $L^1$-Lebesgue point on $M_j$ of the Radon-Nikodym derivative $\frac{d ( \|V\| \, \smallmres \, M_j)}{d ( \cH^n \, \smallmres \, M_j)}$, which equals $\Theta^n ( \|V\|,x_0)$.
Finally, the same properties~\eqref{eqn:full-density-point} and~\eqref{eqn:density-approximation} hold with the set $Q_r$ replaced by an ambient Euclidean ball $B_r(x_0)$.
\end{lemma}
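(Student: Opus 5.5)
The plan is to combine Lebesgue differentiation relative to $\cH^n \mres M_j$ with the rectifiable structure of $\|V\|$ from Proposition~\ref{prop:menne}, and then to compare $Q_r$ with ambient balls through the chart $\Xi^{M_j}_{x_0}$.

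Write $\|V\| = \sum_i \theta\, \cH^n \mres \cB_i$. For each $j$, the measure $\mu_j := \|V\| \mres (\bR^{n+m}\setminus \cB_j)$ is mutually singular with $\cH^n \mres M_j$ restricted to $\cB_j$. Indeed, the sets $\cB_i$ are pairwise disjoint, so $\mu_j(\cB_j)=0$. By the Besicovitch differentiation theorem on the rectifiable measure $\|V\|$ (see \cite{maggi}*{Theorem 5.8}), for $\|V\|$-a.e.\ $x_0\in\cB_j$ we have
\[
\lim_{r\downarrow 0}\frac{\|V\|(B_r(x_0)\setminus\cB_j)}{\|V\|(B_r(x_0))}=0 .
\]
Since $x_0$ satisfies property (iii), $\|V\|(B_r(x_0)) = \theta(x_0)|B^n| r^n + o(r^n)$, and therefore $\|V\|(B_r(x_0)\setminus \cB_j) = o(r^n)$.

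Next, apply the Lebesgue differentiation theorem on the $C^2$ manifold $M_j$ to the $L^1_{\rm loc}(\cH^n\mres M_j)$ function $\chi_{\cB_j}\,\theta$. This holds in the intrinsic balls of $M_j$, or equivalently in $M_j\cap B_r(x_0)$, because $M_j$ is locally bi-Lipschitz to $\bR^n$ via $\Xi^{M_j}_{x_0}$ and Lebesgue differentiation holds for any doubling family. It gives, for $\cH^n$-a.e.\ $x_0\in\cB_j$,
\[
\cH^n(B_r(x_0)\cap M_j\setminus\cB_j)=o(r^n), \qquad \int_{B_r(x_0)\cap\cB_j}|\theta(x)-\theta(x_0)|\,d\cH^n=o(r^n).
\]
Since $\|V\|\mres\cB_j=\theta\,\cH^n\mres\cB_j$ with $\theta\geq 1$, an $\cH^n\mres\cB_j$-null set is $\|V\|$-null. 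Intersecting these full-measure sets over the countably many $j$ yields the set of good points. On $\cB_j$ we have $\theta=\Theta^n(\|V\|,\cdot)$ a.e., which gives \eqref{eqn:density-approximation} for balls. Combined with $\cH^n(M_j\cap B_r(x_0))=|B^n|r^n+o(r^n)$, since $M_j$ is $C^2$, this also gives $\|V\|(B_r\cap\cB_j)=\theta(x_0)|B^n|r^n+o(r^n)$. For the cubes the analogous constant is $\theta(x_0) r^n$, which is the normalization in \eqref{eqn:full-density-point}; for balls the constant $|B^n|$ is understood.

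To pass to $Q_r$, I use two facts. First, $Q_r\subset B_{\sqrt n r}(x_0)$, because $|\Xi(v)-x_0|\le 2|v|$ and $|v|\le \tfrac{\sqrt n}{2}r$ on $W_r$. Second, $\cH^n(Q_r)=r^n+o(r^n)$, since $D\Xi(0)=\mathrm{id}$ and the Jacobian of $\Xi$ is $1+O(|v|)$. The errors $o(r^n)$ on the larger ball $B_{\sqrt n r}(x_0)$ are still $o(r^n)$, so the bounds on $\|V\|(Q_r\setminus\cB_j)$ and on the $\theta$-oscillation integral over $Q_r\cap\cB_j$ follow from the ball versions. Then
\[
\|V\|(Q_r\cap\cB_j)=\int_{Q_r\cap\cB_j}\theta=\theta(x_0)\,\cH^n(Q_r\cap\cB_j)+o(r^n),
\]
and $\cH^n(Q_r\setminus\cB_j)\le\cH^n(M_j\cap B_{\sqrt n r}\setminus\cB_j)=o(r^n)$, which gives $\theta(x_0)r^n+o(r^n)$. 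The density-point statement for $\cB_j$ in $M_j$ and the identification of the Radon–Nikodym derivative of $\|V\|\mres M_j$ at $x_0$ are immediate from these estimates.

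The main obstacle is the first step, $\|V\|(B_r\setminus\cB_j)=o(r^n)$. The concern is that other sheets $M_i$ may pass through $x_0$ and carry mass not counted in $\cB_j$. The differentiation argument above handles this, since $\|V\|\mres(\bR^{n+m}\setminus\cB_j)$ has zero $\|V\|$-density at $\|V\|$-a.e.\ point of $\cB_j$. One must still make sure that the Besicovitch theorem is applied to the Radon measure $\|V\|$ itself, with the Borel set $\cB_j$.
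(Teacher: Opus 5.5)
Your proposal is correct and follows essentially the paper's argument. Like the paper, you use Besicovitch/Lebesgue differentiation to take $x_0$ to be a $\|V\|$-density point of $\cB_j$, an $\cH^n \mres M_j$-density point of $\cB_j$, and a Lebesgue point of $\theta$ on $M_j$, and then transfer the ball estimates to $Q_r$ via $Q_r \subset B_{\sqrt{n}\,r}(x_0) \cap M_j$ and $\cH^n(Q_r) = r^n + o(r^n)$. You simply spell out the details the paper leaves implicit, and you correctly note that the inner inclusion $B_{cr}\cap M_j \subset Q_r$ cited by the paper is not needed.
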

\begin{proof}
Take $x_0$ to be simultaneously a density point of $\cB_j$ relative to $M_j$, a $\|V\|$-density point of $\cB_j$, and an $L^1$-Lebesgue point on $M_j$ of the Radon-Nikodym derivative $d(\|V\|\mres M_j)/d(\cH^n\mres M_j)$.
These properties hold for $\|V\|$-a.e. point by the Besicovitch differentiation theorem~\cite{maggi}*{Theorem 5.8}.
Since $Q_r$ is the image of a Euclidean cube of side length $r$ centered at the origin in $T_{x_0}M_j$ under the $C^2$ diffeomorphism $\Xi^{M_j}_{x_0}$, we have $\cH^n(Q_r) = r^n + o(r^n)$ and $B_{cr} \cap M_j \subset Q_r \subset B_{Cr} (x_0) \cap M_j$ for fixed $0 < c < C< \infty$ and all sufficiently small $r$.
The differentiation properties at $x_0$ therefore give~\eqref{eqn:full-density-point} and~\eqref{eqn:density-approximation} directly.
The same differentiation theorem gives the corresponding statements for ambient Euclidean balls.
This proves our assertion.
\end{proof}

Combining the properties of Proposition~\ref{prop:menne} and Lemma~\ref{lemma:v-ae-q-gj}, we introduce the following notion.

\begin{definition}\label{def:alexandrov-point}
Let the varifold $V$ and the locally semiconvex function $U$ be as in Proposition~\ref{prop:menne}.
    We call a point $x \in \on{spt} \|V\|$ a \textbf{good point} for $U$ if the following properties hold at $x$:
    \begin{enumerate}[(i)]
        \item $x$ lies on a $C^2$ sheet $M_j$ such that $T_x M_j = T_x V$ and the classical mean curvature of $M_j$ at $x$ equals $H_V(x)$.
        Also, $x$ is contained in the Borel set $\cB_j$ and satisfies the properties~\eqref{eqn:full-density-point} and~\eqref{eqn:density-approximation}. 
        \item The restriction of $\|V\|$ to $M_j$ has $\cH^n$-density equal to $\Theta^n( \|V\|,x)$ at $x$ and $x$ is a $\|V\|$-Lebesgue point of $H_V$ and $\frac{d ( \|V\| \, \smallmres \, M_j)}{d ( \cH^n \, \smallmres \, M_j)}$ relative to $\cB_j$, which satisfies Lemma~\ref{lemma:v-ae-q-gj}.
%        Moreover, the singular part of the distributional Laplacian $\Delta^{\textup{dist}}_{M_j} ( U|_{M_j})$ has zero $n$-density at $x$.
        \item $\nabla^V U(x)$ exists and is well-defined.
        \item For any other sheet $M_i$ through $x$, the second fundamental forms $\mathrm{I\!I}_{M_i}(x)$ and $\mathrm{I\!I}_{M_j}(x)$ agree.
        Moreover, the Alexandrov Hessians $D^2_{M_i} U(x)$ and $D^2_{M_j} U(x)$ are defined and agree.
    \end{enumerate}
    At a good point $ x \in \on{spt} \|V\|$, we choose any $C^2$ sheet $M_j$ representing $V$ at $x$.
    We define the varifold Hessian $D^2_V U(x)$ as the Alexandrov Hessian $D^2_{M_j} U(x)$.
    We define the second fundamental form $\mathrm{I\!I}_V(x)$ as the classical second fundamental form $\mathrm{I\!I}_{M_j}(x)$.
\end{definition}
By Proposition~\ref{prop:menne} and the Rademacher and Alexandrov differentiability theorems invoked above, the set of good points has full $\|V\|$-measure.
We now establish an important integration by parts inequality for semiconvex functions on mean curvature-rectifiable varifolds.

\begin{proposition}\label{prop:fancy-integration-by-parts}
Let $V$ be a compactly supported integral $n$-varifold with first variation $\delta V = - H_V \| V\| + \delta_s V$.
Let $U : \bR^{n+m} \to \bR$ be a locally semiconvex, $1$-Lipschitz map.
Then, for every non-negative compactly supported Lipschitz function $\varphi$, it holds that
\[
\int \varphi \Delta_V U \, d \|V\| \leq -\int \la \nabla^V \varphi, \nabla^V U \rg \, d \|V\| + \int \varphi \, d | \delta_s V |.
\]
Here, $\Delta_V U(x) := \textup{tr} \, D^2_V U(x)$ is the trace of the Alexandrov Hessian of $U$ restricted to a $C^2$ sheet representing $V$ at a good point.
\end{proposition}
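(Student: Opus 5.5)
The plan is to regularize $U$ and apply the first variation formula to a tangential test field. Throughout, $U$ is $1$-Lipschitz and semiconvex with constant $\kappa$ on a neighborhood of $\textup{spt}\,\|V\|$, meaning $U+\tfrac{\kappa}{2}|x|^2$ is convex there.

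\textbf{Step 1: smooth approximation and first variation.} Mollify to get $U_\ve := U * \eta_\ve$. These functions are smooth, $1$-Lipschitz, and satisfy $D^2 U_\ve \geq -\kappa\,\textup{id}$ uniformly. Moreover $DU_\ve \to DU$ at every point of differentiability of $U$. Apply the first variation to the vector field $X=\varphi\, DU_\ve$:
\[
\int \textup{div}_V(\varphi DU_\ve)\, d\|V\| = \int \varphi \la H_V, DU_\ve\rg\, d\|V\| + \int \varphi \la DU_\ve, d\,\delta_s V\rg .
\]
Here $\varphi$ is only Lipschitz, which is handled by a further mollification of $\varphi$ or by density. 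Since $|DU_\ve|\le 1$, the last term is bounded by $\int\varphi\, d|\delta_s V|$.

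Expanding the left side gives $\la \nabla^V\varphi, DU_\ve\rg + \varphi\, \textup{tr}_{T_xV} D^2U_\ve$. At a good point $x$ on a sheet $M_j$, the tangential Hessian satisfies
\[
\textup{tr}_{T_xV} D^2 U_\ve = \Delta_{M_j}(U_\ve|_{M_j}) + \la H_V(x), DU_\ve\rg ,
\]
using Proposition~\ref{prop:menne}(i). The mean curvature terms cancel, leaving
\[
\int \varphi\, \Delta_{M}U_\ve \, d\|V\| \le -\int \la\nabla^V\varphi, DU_\ve\rg\, d\|V\| + \int\varphi\, d|\delta_sV| .
\]
Here $\Delta_M U_\ve$ denotes the sheetwise intrinsic Laplacian. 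This identity is where the perpendicularity and locality in Proposition~\ref{prop:menne} enter.

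\textbf{Step 2: limit of the gradient term.} Let $\ve\to0$. We have $\la\nabla^V\varphi, DU_\ve\rg \to \la\nabla^V\varphi,\nabla^VU\rg$ at $\|V\|$-a.e.\ point. This requires $DU$ to exist $\|V\|$-a.e., which can fail, since $U$ is only differentiable $\mathcal L^{n+m}$-a.e. To fix this, use semiconvexity: $DU_\ve(x)$ converges to a point of the subdifferential $\partial U(x)$, and its tangential projection is determined because $U|_{M_j}$ is differentiable a.e.\ on $M_j$. Alternatively, replace $DU_\ve$ by the tangential field $\nabla^{M}U_\ve$ from the start. Dominated convergence then handles this term.

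\textbf{Step 3 (main obstacle): the Laplacian term.} We need
\[
\int\varphi\,\Delta_VU\, d\|V\| \le \liminf_{\ve\to 0} \int\varphi\, \Delta_M U_\ve\, d\|V\| .
\]
The integrands $\Delta_MU_\ve + n\kappa'$ are non-negative, where $\kappa'$ also absorbs the sheet curvature. By Fatou, it suffices to prove the pointwise bound $\liminf_\ve \Delta_M U_\ve(x) \ge \Delta_V U(x)$ at good points. This is delicate because mollification in the ambient space does not commute with restriction to $M_j$. Sheetwise, the distributional Hessian of $U|_{M_j}$ equals the Alexandrov part plus a non-negative singular part, so one expects only an inequality, which is the correct direction.

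To prove the pointwise bound I would use a blowup at a good point $x_0$. Rescale $U$ around its second-order Taylor polynomial at $x_0$, set $r\sim\ve$, and study the sub-level sets of the convex function $U+\tfrac\kappa2|x|^2$ minus its supporting affine function, in the spirit of Collins--Tong. Semiconvexity in the ambient space together with the Alexandrov expansion along $M_j$ should force the ambient averaged Hessian on $B_\ve(x_0)$, projected to $T_{x_0}V$, to be asymptotically at least $D^2_VU(x_0)$ in trace. Normal directions only add non-negative contributions, by convexity. The density estimates of Lemma~\ref{lemma:v-ae-q-gj} justify that sheets other than $M_j$ contribute negligibly near $x_0$.

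If the pointwise bound proves too hard, the fallback is to bypass ambient mollification altogether. On each sheet, integrate by parts using the distributional Laplacian of $U|_{M_j}$, whose singular part is non-negative modulo $\kappa$. Then glue the sheets via the first variation applied to $\varphi\,\nabla^M U$, approximated through cubes $Q_r$ and a covering argument. Either way, this comparison between the ambient regularization and the intrinsic Alexandrov Laplacian is the crux of the proof.
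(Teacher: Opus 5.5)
Your Steps 1 and 2 match the start of the paper's proof. You mollify, apply the first variation to $\varphi\,DU_\ve$, cancel the mean curvature terms, and get $\nabla^VU_\ve\to\nabla^VU$ $\|V\|$-a.e.\ from the uniform semiconvexity of the restrictions $U_\ve|_{M_j}$.

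The gap is in Step 3. The pointwise bound $\liminf_{\ve\to0}\Delta_{M_j}(U_\ve|_{M_j})(x)\ge\Delta_VU(x)$, on which your Fatou argument rests, is false in general. The Alexandrov expansion along $M_j$ cannot produce it, because it says nothing about $U$ at distance $\sim\ve$ off the sheet. Take $x=(x',x'')\in\bR^n\times\bR^m$, $M=\{x''=0\}$ and $U(x)=\max\{\tfrac12|x'|^2,\,K|x''|\}$. Then $\Delta_M(U|_M)(0)=n$. But $U=K|x''|$ on $B_\ve(0)\cap\{|x''|>\ve^2/(2K)\}$, so the tangential Hessian measure of $U$ only charges a thin slab of $B_\ve(0)$, and $\Delta_M(U_\ve|_M)(0)=O((\ve/K)^m)\to0$. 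On each slice $\{x''=c\}$ the tangential curvature has been pushed to distance $\sim|c|^{1/2}\gg\ve$.

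One can superpose this over dyadic centres along a sparse sequence of scales $\ve_k$, so that on the shells $|x''|\sim\ve_k$ the slices are affine in $x_1$ off the grid $2^{-J_k}(\mathbb{Z}+\tfrac12)$, with $2^{-J_k}\gg\ve_k$. This gives a convex Lipschitz $U$ with the same defect at almost every point of a flat sheet. Meanwhile $\Delta_M(U_\ve|_M)\,\cH^n\mres M$ still converges weakly to the correct limit. So the mollified Laplacians converge only in a weak, measure-theoretic sense, and Fatou discards exactly the information you need.

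Your observation that normal directions contribute non-negatively does not help here, because the loss occurs in the tangential second derivatives at off-sheet points. Your fallback does not close the gap either. The pieces $\cB_j$ are arbitrary Borel subsets of the sheets, so sheetwise integration by parts creates uncontrolled boundary terms. Also, $\varphi\,\nabla^{M}U$ is not an admissible $C^1$ ambient vector field for $\delta V$.

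The missing idea is to give up pointwise convergence and work with the limit measure. Since $\Delta_VU_\ve\ge-nC-|H_V|$ and the Step 1 identity bounds the masses, along a subsequence $\Delta_VU_{\ve_k}\,\|V\|\rightharpoonup\lambda$ and $DU_{\ve_k}\rightharpoonup q$ weak-$*$ in $L^\infty(|\delta_sV|)$. This yields $\int\psi\,d\lambda=-\int\la\nabla^V\psi,\nabla^VU\rg\,d\|V\|+\int\psi\la q,\zeta\rg\,d|\delta_sV|$ with $\lambda_s\ge0$. It then suffices to show that the density $g=d\lambda/d\|V\|$ satisfies $g\ge\Delta_VU$, which is Proposition~\ref{prop:measure-zeta-q}.

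At a good point $x_0$, set $\hat U_\delta=U-U(x_0)+s_\delta$ with $s_\delta(x)=-\la p,x-x_0\rg+\tfrac{C+\delta}{2}|x-x_0|^2$. Then $\hat U_\delta\ge\tfrac\delta2|x-x_0|^2$ and $A_\delta=D^2_{M_j}\hat U_\delta(x_0)\ge\delta$. Test the identity satisfied by $\lambda+\Delta_Vs_\delta\,\|V\|$ with $\psi_r=\eta(r^{-2}\hat U_\delta)$, which after a fixed cutoff is supported in $B_{2r/\delta}(x_0)$. The left side equals $r^n\theta_0\,(g+\Delta_Vs_\delta)(x_0)\int\eta(\tfrac12A_\delta[z,z])\,dz+o(r^n)$. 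On the right, Fatou applies to the non-negative term $-r^{-2}\eta'(r^{-2}\hat U_\delta)|\nabla^V\hat U_\delta|^2$. It uses only the first-order expansion $\nabla^{M_j}\hat U_\delta(\Phi(y))=D\Phi(0)A_\delta y+o(|y|)$ in a chart $\Phi$ of $M_j$ centred at $x_0$. This shows $\liminf_{r\downarrow0}r^{-n}$ of the right side is at least $\theta_0\,\textup{tr}A_\delta\int\eta(\tfrac12A_\delta[z,z])\,dz$, whence $g(x_0)\ge\Delta_VU(x_0)$.

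This is where your sub-level-set blow-up belongs: on the limit identity, with test functions built from $U$ itself, rather than on the mollified Hessians.
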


The proof of Proposition~\ref{prop:fancy-integration-by-parts} relies on the following inequality for measures.
\begin{proposition}\label{prop:measure-zeta-q}
In the setting of Proposition~\ref{prop:fancy-integration-by-parts}, we write $\delta_s V = \zeta \, |\delta_s V|$, for $\zeta$ a $\bR^{n+m}$-valued function with $\delta_s V = \zeta \, |\delta_s V|$ and $|\zeta|=1$ holding $|\delta_sV|$-a.e.
We consider a signed Radon measure $\lambda$ and suppose that there exists a function $q \in L^{\infty}_{\textup{loc}}( |\delta_sV | ; \bR^{n+m})$ such that
\[
\int \psi \, d \lambda = - \int \la \nabla^V \psi, \nabla^V U \rg \, d\|V\| + \int \psi \la q, \zeta \rg \, d | \delta_s V|
\]
for every compactly supported $C^1$ function $\psi$ defined in an open neighborhood of $\on{spt} \|V\|$.
Moreover, suppose that the singular measure $\lambda_s$ in the Lebesgue decomposition of $\lambda$ with respect to $\|V\|$ satisfies $\lambda_s \geq 0$.
Then, $\lambda - \Delta_V U \, \|V\| \geq 0$ is a non-negative measure on $\bR^{n+m}$.
\end{proposition}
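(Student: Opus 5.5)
Since $\lambda_s \geq 0$ by hypothesis, the whole statement reduces to the absolutely continuous part of $\lambda$. Write $\lambda = f\,\|V\| + \lambda_s$ with $f \in L^1_{\textup{loc}}(\|V\|)$. Then $\lambda - \Delta_V U\,\|V\| = (f - \Delta_V U)\,\|V\| + \lambda_s$, and it suffices to prove that $f(x_0) \geq \Delta_V U(x_0)$ for $\|V\|$-a.e.\ $x_0$. I would do this at good points $x_0$ in the sense of Definition~\ref{def:alexandrov-point} that are, in addition, $\|V\|$-Lebesgue points of $f$ and points where $|\delta_s V|$ and $\lambda_s$ both have zero $n$-density relative to $\|V\|$. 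Both density conditions hold $\|V\|$-a.e.\ because these measures are singular with respect to $\|V\|$. At such a point, $f(x_0)$ is computed by testing the identity against bump functions $\psi$ concentrated near $x_0$, and the terms coming from $\lambda_s$ and from $|\delta_s V|$ are $o(r^n)$.

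The plan is to compare $U$ with its second-order Taylor polynomial on the sheet. Fix a good point $x_0$ on the sheet $M_j$ and $\varepsilon>0$. Let $P$ be a $C^2$ function on $\bR^{n+m}$ whose restriction to $M_j$ agrees with the second-order expansion of $U|_{M_j}$ at $x_0$, shifted downward by $\varepsilon(|x-x_0|^2 - r^2)$. By semiconvexity and Alexandrov's theorem, $U - P$ is positive on $\partial$-scale $r$ and negative near $x_0$, so the sublevel set $S_r = \{U < P\}\cap B_r(x_0)$ is a small region that shrinks at scale $r$. Take $\psi = (P - U)_+$, extended by zero; it is Lipschitz and compactly supported, which is enough after approximation by $C^1$ functions. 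Testing the identity with $\psi$, and testing the first-variation identity for $V$ with $\psi\nabla P$, gives
\[
\int \psi\, d\lambda + \int \psi\, \textup{div}_V \nabla P \, d\|V\| \le \int_{S_r} \la \nabla^V \psi, \nabla^V(P-U)\rg\, d\|V\| + \text{(singular terms)} .
\]
The mean-curvature term is handled by Proposition~\ref{prop:menne}(i): on the sheet, $\textup{div}_V \nabla P = \textup{tr}\, D^2_{M_j}P + \la H_V,\nabla P\rg$-type corrections cancel against $H_V$ via the locality of mean curvature. The right-hand side is nonnegative, which yields the sign needed for the inequality. The singular terms are $O(\|q\|_\infty \sup\psi)\,|\delta_s V|(B_r) = o(r^{n+2})$, since $\sup \psi = O(\varepsilon r^2)$.

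Dividing by $\int \psi\, d\|V\| \sim c\,\varepsilon r^{n+2}$ and letting $r\to 0$, then $\varepsilon \to 0$, should give $f(x_0) \geq \textup{tr}\, D^2 P - O(\varepsilon) = \Delta_V U(x_0) - O(\varepsilon)$. The Lebesgue-point properties in Lemma~\ref{lemma:v-ae-q-gj} are used to replace $\|V\|$ on $B_r$ by $\theta(x_0)\,\cH^n \mres M_j$ up to $o(r^n)$ errors.

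I expect the main obstacle to be the control of the other sheets $M_i$ and of $\|V\|$-mass off $\cB_j$ inside $B_r(x_0)$. There, $U - P$ need not be close to its Taylor expansion, so $\int\psi$ could pick up uncontrolled contributions. Lemma~\ref{lemma:v-ae-q-gj} gives only $o(r^n)$ mass there, while $\psi = O(\varepsilon r^2)$, so those contributions are $o(\varepsilon r^{n+2})$; this should suffice. What remains delicate is showing that the gradient term $\int\la\nabla^V\psi,\nabla^V(P-U)\rg$ has the right sign everywhere, including on those off-sheet pieces. Here I would use that $\nabla^V\psi = \nabla^V(P-U)$ on $S_r$, so this integrand is $|\nabla^V(P-U)|^2\geq 0$ pointwise, independently of which sheet it lies on. I would also use the blowup of the sublevel sets $S_r$ in the spirit of Collins--Tong to guarantee that $\int \psi\, d\|V\|$ is genuinely of order $\varepsilon r^{n+2}$.
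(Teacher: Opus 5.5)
\textbf{There is a genuine gap: localization of your test function.}

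Your reduction matches the paper's: you reduce to good points that are Lebesgue points of the density $f$ and zero-density points of $\lambda_s$ and $|\delta_sV|$. The comparison identity behind your plan is also sound once the signs are fixed. Testing with $\psi=(P-U)_+$ and with $X=\psi\,DP$ gives
\[
\int\psi\,d\lambda-\int\psi\,\Delta_VP\,d\|V\| = \int_{\{P>U\}}|\nabla^V(P-U)|^2\,d\|V\| + \int\psi\,\langle q-DP,\zeta\rangle\,d|\delta_sV|,
\]
with $\Delta_VP=\textup{tr}_{T_xV}D^2P+\langle H_V,DP\rangle$. As displayed, your inequality has the wrong sign on the $P$-term, and with a non-negative right-hand side it bounds $\int\psi\,d\lambda$ from above rather than below.

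Every estimate you then make presupposes that $\{P>U\}\cap\textup{spt}\,\|V\|\subset B_{Cr}(x_0)$. This includes the singular terms being $o(r^{n+2})$, the off-sheet mass contributing $o(\varepsilon r^{n+2})$, and replacing $f$ and $\Delta_VP$ by their values at $x_0$. But the Alexandrov expansion only controls $U-P$ on $M_j$. Off $M_j$, $U$ is merely semiconvex and can dip below $P$ at normal distance $O(|x-x_0|^2)$, at every scale.

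For instance, take $M_j=\{z=0\}$ with one normal coordinate $z$, and the convex function $U(y,z)=\sup_s\bigl[\tfrac a2(2sy-s^2)+\beta(s)z\bigr]$, with $\beta$ alternating between two values on consecutive dyadic intervals. Then $U=\tfrac a2y^2$ on $M_j$. Yet for small $\varepsilon$, consider any smooth $P$ with $P(x_0)\ge U(x_0)$, $\nabla^{M_j}P(x_0)=0$ and $D^2_{M_j}P(x_0)=a-2\varepsilon$, whatever its normal slope. Such a $P$ exceeds $U$ at points $(y,\pm c\,\varepsilon y^2)$ along a sequence $y\to0$, independently of $r$.

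If $\textup{spt}\,\|V\|$ meets such points (other sheets, or small pieces carrying only $o(r^n)$ mass in $B_r(x_0)$), then $\psi$ is not concentrated near $x_0$. Those pieces contribute to $\int\psi\,d\lambda$ and $\int\psi\,\Delta_VP\,d\|V\|$ amounts that do not vanish as $r\downarrow0$, while your main term is of order $\varepsilon r^{n+2}$. Nothing is known there about $\lambda-\Delta_VP\,\|V\|$, so you cannot isolate $f(x_0)$.

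The obvious repairs do not work. Truncating to $B_r(x_0)$ makes $\psi$ discontinuous. A spatial cutoff $\chi$ destroys the sign: it adds $\int(P-U)_+\langle\nabla^V\chi,\nabla^V(P-U)\rangle\,d\|V\|$, which on the off-sheet mass is only $O(r^2\cdot r^{-1})\,o(r^n)=o(r^{n+1})$, not $o(\varepsilon r^{n+2})$.

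\textbf{How the paper avoids this.} It does not put the tangential Hessian into the comparison function. Take an ambient subgradient $p$ of $U+\frac C2|x-x_0|^2$ at $x_0$ and set $\hat U_\delta=U-U(x_0)-\langle p,x-x_0\rangle+\frac{C+\delta}2|x-x_0|^2$. This satisfies $\hat U_\delta\ge\frac\delta2|x-x_0|^2$ on an ambient ball. Hence $\psi_r=\eta(\hat U_\delta/r^2)$ is automatically supported in $B_{2\delta^{-1}r}(x_0)$ on all of $\textup{spt}\,\|V\|$. The smooth part $s_\delta$ is absorbed exactly by the first variation formula.

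The price is that the comparison function no longer carries $D^2_{M_j}U(x_0)$, so the gradient term cannot be discarded; doing so would only give $g(x_0)\ge-\Delta_Vs_\delta(x_0)$. Instead, its non-negativity is used only to drop the off-sheet part, and the on-sheet part is evaluated by blow-up and Fatou, with $\theta_0=\Theta^n(\|V\|,x_0)$:
\[
\liminf_{r\downarrow0}\, r^{-n}\int(-\eta')\Bigl(\frac{\hat U_\delta}{r^2}\Bigr)\frac{|\nabla^V\hat U_\delta|^2}{r^2}\,d\|V\| \ge \theta_0\int_{\bR^n}(-\eta')\bigl(\tfrac12A_\delta[z,z]\bigr)|A_\delta z|^2\,dz = \theta_0\,\textup{tr}A_\delta\int_{\bR^n}\eta\bigl(\tfrac12A_\delta[z,z]\bigr)\,dz .
\]
This is exactly the missing $\textup{tr}A_\delta=\Delta_VU(x_0)+\Delta_Vs_\delta(x_0)$. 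Reading the Hessian off the gradient term in this way, rather than through a Hessian-matching comparison function, is the ingredient your proposal lacks.
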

\begin{proof}
    First, we observe that since the claimed equality holds for all $C^1_c$ test functions, it is valid for compactly supported Lipschitz $\psi$.
    Indeed, by the standard approximation property of Lipschitz functions on rectifiable varifolds, as in~\cite{menne-sobolev-functions}*{Corollary 3.7} and~\cite{menne-2023}*{Theorem 6.20}, we can find a sequence of functions $\psi_{\ell} \in C_c^1( W)$ supported in a fixed bounded open set $\Omega$ and satisfying
    \[
    \psi_{\ell} \to \psi \quad \text{in } \; L^1_{\textup{loc}} ( |\delta_s V| + |\lambda|), \qquad \nabla^V \psi_{\ell} \to \nabla^V \psi \quad \text{in } \; L^1_{\textup{loc}}( \|V\|),
    \]
    in addition to $\sup_{\ell} |\nabla \psi_{\ell}| \leq C(\psi)$ and $\sup_{\ell} \|\psi_{\ell}\|_{L^{\infty}} \leq C(\psi)$.
%    For completeness, we recall why this approximation property applies here. Since $V$ is rectifiable, $\|V\|$-a.e.~ point lies on one of the countably many $C^2$, sheets covering $\textup{spt} \, \|V\|$, by Proposition~\ref{prop:menne}. On each sheet, the restriction of the Lipschitz function $\psi$ belongs to $W^{1,\infty}$, and its weak derivative agrees with the tangential gradient $\nabla^V \psi$. By Lusin's theorem and Whitney's extension theorem, the values and tangential first-order vectors of $\psi$ can be approximated on an exhausting sequence of compact subsets of the sheets by $C^1$ functions. Then, we can apply a diagonal argument over the countably many sheets to obtain the desired convergence of tangential gradients in $L^1_{\textup{loc}}( \|V\|)$. Multiplying by a fixed cutoff which is identically equal to $1$ on $\textup{spt} \, \varphi$ then preseves the compact support in $\Omega$.
    Combining this property with $q \in L^{\infty}_{\textup{loc}}( |\delta_s V| ; \bR^{m+n})$ and $|\nabla^V U| \leq 1$ holding $\|V\|$-a.e., we deduce that $\| \la q, \zeta \rg \|_{L^{\infty}} \leq \| q\|_{L^{\infty}} < + \infty$.
    Therefore, each of the above terms converges:
    \[
    \int \psi_{\ell} \, d \lambda \to \int \psi \, d \lambda, \qquad \int \la \nabla^V \psi_{\ell}, \nabla^V U \rg \, d \|V\| \to \int \la \nabla^V \psi, \nabla^V U \rg \, d \|V\|,
    \]
    and $\int \psi_{\ell} \la q, \zeta \rg \, d |\delta_s V| \to \int \psi \la q, \zeta \rg \, d |\delta_s V|$.
    Thus, we can take $\Omega$ to be a bounded open neighborhood of $\on{spt} \|V\|$ and consider $\psi \in \textup{Lip}_c(\Omega)$ in what follows.
    Moreover, we will work with the Lebesgue decomposition $\lambda = g \, \|V\| + \lambda_s$ of the measure $\lambda$, where $\lambda_s \perp \|V\|$ and $\lambda_s \geq 0$.
    Because the set of good points for $U$ has full $\|V\|$-measure, it suffices to prove the claimed inequality at a good point $x_0$ which is, in addition, a Lebesgue point for $g$ and satisfies $\frac{|\lambda_s| (B_r(x_0))}{r^n} \to 0$ and $\frac{|\delta_s V| (B_r(x_0))}{r^n} \to 0$ as $r \downarrow 0$.
    These additional properties also hold for $\|V\|$-a.e.~ good point, by the Besicovitch differentiation theorem and the properties $\lambda_s \perp \|V\|$ and $\delta_s V \perp \|V\|$.
    In what follows, we fix such a good point $x_0$ and choose a representing sheet $M_j$ through $x_0$, with $x_0 \in \cB_j$. 
    We write $\theta(x) := \Theta^n( \|V\|, x)$ and $\theta_0 := \Theta^n ( \|V\|, x_0)$, so after restricting $\|V\|$ to $\cB_j$, we have 
    \[
    d \|V\| = \theta \, d \cH^n, \qquad \frac{\cH^n ( M_j \cap B_r(x_0) \setminus \cB_j)}{r^n} \to 0, \qquad \frac{1}{r^n} \int_{M_j \cap B_r(x_0) \cap \cB_j} |\theta - \theta_0| \, d \cH^n \to 0.
    \]
    Because $U$ is locally semiconvex, we can find some small $\rho>0$ and some $C<\infty$ such that $U(x) + \frac{C}{2} |x - x_0|^2$ is convex on $B_{\rho}(x_0) \Subset \Omega$.
    Since $U|_{M_j}$ is differentiable at $x_0$, the tangential projection of any subdifferential vector $p \in \partial ( U + \tfrac{1}{2} C |x - x_0|^2 ) (x_0)$ onto $T_{x_0}M_j$ is uniquely determined by $\la p, \tau \rg = D \bigl( U|_{M_j} + \tfrac{1}{2}C |x - x_0|^2 \bigr) (x_0) [\tau]$, upon applying the sub-gradient inequality to $\gamma(t)$ and $\gamma(-t)$, for any $C^1$ curve $\gamma \subset M_j$ with $\gamma(0) = x_0$ and $\gamma'(0) = \tau$.
    For fixed $\delta \in (0, \frac{1}{100})$, we define
    \[
    s_{\delta}(x) := - \la p, x - x_0 \rg + \tfrac{1}{2} (C+\delta) |x - x_0|^2
    \]
    and let $\hat{U}_{\delta}(x) := U(x) - U(x_0) + s_{\delta}(x)$.
    By the convexity of $U + \frac{C}{2} |x - x_0|^2$, we have $\hat{U}_{\delta}(x) \geq \frac{\delta}{2} |x - x_0|^2$ for all $x \in B_{\rho}(x_0)$.
    By construction, it holds that $\hat{U}_{\delta}(x_0) = 0$ and $\nabla^{M_j} \hat{U}_{\delta}(x_0) = 0$.
    Therefore, $\hat{U}_{\delta}|_{M_j}$ has a strict quadratic minimum at $x_0$, which implies that
    \[
    A_{\delta} \geq \delta \, \textup{id}_{T_{x_0} M_j}, \qquad \text{where } \; A_{\delta} := D^2_{M_j} ( \hat{U}_{\delta}|_{M_j} )(x_0).
    \]
    We compute that $D s_{\delta}(x) = - p + (C+\delta)(x - x_0)$, hence also $\Delta_V s_{\delta} = \textup{tr}_{T_x V} \, D^2 s_{\delta} + \la H_V, D s_{\delta} \rg$ and
    \[
    D^2 s_{\delta} = (C+\delta) I, \qquad \Delta_V s_{\delta}(x) = n(C+\delta) + \la H_V(x), - p + (C+\delta)(x - x_0) \rg.
    \]
    Since $x_0$ is a $\|V\|$-Lebesgue point of $H_V$, this expression is approximately continuous at $x_0$, so $x_0$ is also a $\|V\|$-Lebesgue point of the function $\Delta_V s_{\delta}$ on the density-one sheet $\cB_j \subset M_j$.
    The matrix $A_{\delta}$ is positive-definite, and $\textup{tr} \, A_{\delta} = \Delta_V U(x_0) + \Delta_V s_{\delta}(x_0)$ because $M_j$ represents $V$ at $x_0$, while $H_{M_j}(x_0) = H_V(x_0)$ at a good point.
    Moreover, 
    \[
    \int_{\Omega} \psi \Delta_V s_{\delta} \, d \|V \| = - \int_{\Omega} \la \nabla^V \psi , \nabla^V s_{\delta} \rg \, d \|V\| + \int_{\Omega} \psi \la D s_{\delta}, \zeta \rg \, d |\delta_s V|
    \]
    by the usual integration by parts on the varifold $V$, since $s_{\delta}$ is smooth, cf.~\cite{menne-2023}*{Example 17.12}.
    We consider the signed Radon measure $\tilde{\lambda}_{\delta} := \lambda + ( \Delta_V s_{\delta}) \|V\|$, so our previous extension implies that
\begin{equation}\label{eqn:signed-radon-measure-q}
    \int_{\Omega} \psi \, d \tilde{\lambda}_{\delta} = - \int_{\Omega} \la \nabla^V \psi , \nabla^V \hat{U}_{\delta} \rg \, d \|V\| + \int_{\Omega} \psi \la q + D s_{\delta}, \zeta \rg \, d |\delta_s V|
\end{equation}
    for every Lipschitz $\psi \in \textup{Lip}_c(\Omega)$.
    The measure $\tilde{\lambda}_{\delta}$ has absolute continuous density with respect to $\|V\|$ equal to $g + \Delta_V s_{\delta}$.
    Moreover, to prove that $\lambda - \Delta_V U \, \|V\| \geq 0$ is a non-negative measure, it suffices to show that 
    $\Delta_V U(x_0) \leq \frac{d \lambda}{d \|V\|}(x_0) = g(x_0)$ for $\|V\|$-a.e.~ good point as in Definition~\ref{def:alexandrov-point}.
    This reduction is valid because the singular measure in the Lebesgue decomposition satisfies $\lambda_s \geq 0$.
    In terms of the measure $\tilde{\lambda}_{\delta}$, this inequality becomes equivalent to 
    \begin{equation}\label{eqn:tr-a-delta-bound}
        g(x_0) + \Delta_V s_{\delta}(x_0) \geq \textup{tr} \, D^2_{M_j} ( \hat{U}_{\delta}|_{M_j}) (x_0) =: \textup{tr}\, A_{\delta}.
    \end{equation}
    We consider a non-zero cutoff function $\eta \in C_c^{\infty}([0,\infty))$ such that $\eta, - \eta' \geq 0$ and $\on{spt} \eta \subset [0,1]$, with $\eta \equiv 1$ on $[ 0, \frac{1}{2}]$.
    For small $r < \frac{1}{20} \delta^2 \rho$, we define 
    \[
    \psi_r(x) := \eta( r^{-2} \hat{U}_{\delta}(x)) \, \eta (  4\rho^{-1} |x - x_0|)
    \]
    so the property $\hat{U}_{\delta}(x) \geq \frac{\delta}{2} |x - x_0|^2$ implies that $\psi_r(x) = 0$ for $x \not\in B_{2 \delta^{-1} r}(x_0)$.
    Moreover, $\psi_r(x) = 0$ for $x \not\in B_{ \rho/4 }(x_0)$.
    In particular, $\on{spt} \psi_r \subset B_{\rho/4}(x_0) \Subset B_{\rho}(x_0)$ is Lipschitz and compactly supported in $B_{\rho}(x_0)$, for all sufficiently small $r$, so it is an admissible test function in~\eqref{eqn:signed-radon-measure-q}.
    In addition, for $x \in B_{2 \delta^{-1}r}(x_0)$, we have $4 \rho^{-1} |x-x_0| \leq \frac{8 r}{\delta \rho} < \frac{8 \delta^2 \rho}{20 \delta \rho} < \frac{1}{2}$, so $\eta ( 4 \rho^{-1} |x - x_0|) \equiv 1$ and $\psi_r = \eta ( \frac{\hat{U}_{\delta}}{r^2})$ on $\on{spt} \psi_r$.
    Using the chain rule, we compute $\|V\|$-a.e.~ that $\nabla^V \psi_r = r^{-2} \eta' ( \frac{\hat{U}_{\delta}}{r^2}) \nabla^V \hat{U}_{\delta}$, hence
    \begin{equation}\label{eqn:psir-integral}
    \int_{\Omega} \psi_r \, d \tilde{\lambda}_{\delta} = \int_{\Omega} \psi_r \la q + D s_{\delta}, \zeta \rg \, d |\delta_s V| - \int_{\Omega} \eta' \Bigl( \frac{\hat{U}_{\delta}}{r^2} \Bigr) \frac{|\nabla^V \hat{U}_{\delta}|^2}{r^2} \, d \|V\|.
    \end{equation}
    Moreover, since $q \in L^{\infty}_{\textup{loc}}( |\delta_s V|)$ and $\on{spt} \psi_r \subset B_{2 \delta^{-1} r}(x_0)$, while $D s_{\delta}$ is smooth, we can bound
    \[
    \left| \int_{\Omega} \psi_r \la q + D s_{\delta} , \zeta \rg \, d |\delta_s V| \right| \leq C(\delta, p, q) \, |\delta_s V| (B_{2 \delta^{-1} r} (x_0)) = o (r^n)
    \]
    because $x_0$ is a good point.
    To compute the small-$r$ asymptotics of the left-hand side, we choose a $C^2$ diffeomorphism $\Phi: B^n_R \to M_j$ onto the sheet $M_j$, with $\Phi(0) = x_0$ and $D \Phi(0): \bR^n \xrightarrow{\simeq} T_{x_0} M_j$.
    Because $D \Phi(0)$ is an isometry, after shrinking the chart we can assume that $\Phi$ satisfies the bi-Lipschitz estimate $c|y| \leq |\Phi(y) - x_0| \leq C |y|$ for every small $y$.
    We will use the change of variables $y = rz$ throughout, whereby $\on{spt} \psi_r \subset B_{2 \delta^{-1} r}(x_0)$ shows that
    \[
    \psi_r ( \Phi(rz)) \neq 0 \implies  cr |z| \leq  |\Phi(rz) - x_0| \leq 2 \delta^{-1} r \implies |z| \leq 2 ( c \delta)^{-1}. 
    \]
    Consequently, any non-negative contribution in the integral equality~\eqref{eqn:psir-integral} comes from the fixed compact domain $B_{2(c\delta)^{-1}}(0)$ in the $z$-variable.
    
    Since $x_0$ is an Alexandrov point for $\hat{U}_{\delta}|_{M_j}$ and $\hat{U}_{\delta}|_{M_j}$ is differentiable at $x_0$, we can express
    \begin{equation}\label{eqn:u-hat-a.e.z}
\begin{split}
    \hat{U}_{\delta} ( \Phi(y)) &= \tfrac{1}{2} A_{\delta} [y,y] + o ( |y|^2), \qquad \nabla^{M_j} \hat{U}_{\delta} ( \Phi(y)) = D \Phi(0) (A_{\delta} y) + o ( |y|) \quad \text{as } \; y \to 0.
\end{split}        
    \end{equation}
    Indeed, the function $\hat{U}_{\delta} \circ \Phi$ is semiconvex in a neighborhood of the origin, so $v(y) := \hat{U}_{\delta} \circ \Phi + \frac{\kappa}{2} |y|^2$ is convex, satisfies $v(y) = \frac{1}{2} (A_{\delta} + \kappa I) [y,y] + o (|y|^2)$, and is differentiable on a set of full measure, by Alexandrov's theorem~\cite{villani}*{Theorems 14.1 and 14.25}.
    For a differentiability point $y \neq 0$, a unit vector $e$, and any $t \in (0,\frac{1}{2})$, the convexity of $v$ implies that
    \[
    \frac{v(y) - v(y - t |y|e)}{t |y|} \leq \la D v(y), e \rg \leq \frac{v(y + t |y| e) - v(y)}{t |y|}.
    \]
    Thus, $v(y) = \frac{1}{2} (A_{\delta} + \kappa I)[y,y] + o (|y|^2)$ implies $\limsup_{y \to 0} \frac{|D v(y) - (A_{\delta} + \kappa I) y|}{|y|} \leq C(A_{\delta},\kappa) t$, and sending $t \downarrow 0$ shows that $Dv(z) = (A_{\delta} + \kappa I) y + o (|y|)$.
    This proves the expansion~\eqref{eqn:u-hat-a.e.z} as $y \to 0$.
    Moreover,
    \[
    \left| \frac{1}{r^n} \int_{\Omega} \psi_r \, d \|V\| - \frac{1}{r^n} \int_{\cB_j} \psi_r \, d \|V\| \right| \leq \frac{\| \psi_r \|_{L^{\infty}}}{r^n} \|V\| ( \textup{spt} \, \psi_r \setminus \cB_j) \leq \frac{C}{r^n} \|V\| (B_{2 \delta^{-1}r} (x_0) \setminus \cB_j)
    \]
    due to $\textup{spt} \, \psi_r \setminus \cB_j \subset B_{2 \delta^{-1}r}(x_0) \setminus \cB_j$.
    Using the density properties of $\cB_j$ and $\theta$ near $x_0$, as in Lemma~\ref{lemma:v-ae-q-gj}, $\|V\|(B_{2 \delta^{-1}r}(x_0) \setminus \cB_j) = o(r^n)$ as $r \to 0$.
    The same argument applies to $\tilde{\lambda}_{\delta}$, since the singular part has a lower-order contribution $|\lambda_s| (B_{2 \delta^{-1} r}(x_0)) = o (r^n)$ at a good point.
    Therefore, as $r \to 0$,
    \[
    \left| \frac{1}{r^n} \int_{\Omega} \psi_r \, d \|V\| - \frac{1}{r^n} \int_{\cB_j} \psi_r \, d \|V\| \right| \to 0, \qquad \left| \frac{1}{r^n} \int_{\Omega} \psi_r \, d \tilde{\lambda}_{\delta} - \frac{1}{r^n} \int_{\cB_j} \psi_r \, d \tilde{\lambda}_{\delta} \right| \to 0
    \]
    Next, in the image of the compact $z$-domain $B_{4 ( c \delta)^{-1}}(0)$ inside the Borel set $\cB_j$, we use the expression~\eqref{eqn:u-hat-a.e.z} to write $r^{-2} \hat{U}_{\delta} (\Phi(rz)) \to \tfrac{1}{2} A_{\delta} [z,z]$ for $z \in B_{4 ( c \delta)^{-1}}(0)$, uniformly as $r \downarrow 0$.
    Moreover, the Jacobian of the map $\Phi$ satisfies $J_{\Phi}(r,z) \to 1$.
    Because $x_0$ is a $\|V\|$-Lebesgue point of $g$ and $\Delta_V s_{\delta}$ is $\|V\|$-approximately continuous at $x_0$, while $|\lambda_s| (B_{2 \delta^{-1} r}(x_0)) = o(r^n)$, we obtain
    \begin{align*}
    \frac{1}{r^n} \int_{\cB_j} \psi_r \, d \|V\| &\to \Theta^n( \|V\|,x_0)\int_{\bR^n} \eta ( \tfrac{1}{2} A_{\delta}[z,z] ) \, dz, \\
    \frac{1}{r^n} \int_{\cB_j} \psi_r \, d \tilde{\lambda}_{\delta} &\to \Theta^n (\|V\|,x_0) \, ( g(x_0) + \Delta_V s_{\delta}(x_0)) \int_{\bR^n} \eta (\tfrac{1}{2} A_{\delta}[z,z]) \,d z.
    \end{align*}
    It remains to examine the third term in the equality~\eqref{eqn:psir-integral}.
    Since $- \eta' \geq 0$, the integrand is everywhere non-negative, so we may restrict to the single sheet $\cB_j$ and estimate
    \[
    \frac{1}{r^n} \int_{\Omega} - \eta' \Bigl( \frac{\hat{U}_{\delta}}{r^2} \Bigr) \frac{|\nabla^V \hat{U}_{\delta}|^2}{r^2} \, d \|V\| \geq \frac{1}{r^n} \int_{\cB_j} -\eta' \Bigl( \frac{\hat{U}_{\delta}}{r^2} \Bigr) \frac{|\nabla^{M_j} \hat{U}_{\delta}|^2}{r^2} \, d \|V\|.
    \]
    After pulling back by the map $\Phi$ and changing variables to $y =rz$, the latter integral becomes
    \[
    I_r = \int_{\bR^n} \mathbf{1}_{\cB_j} ( \Phi(rz)) \theta ( \Phi(rz)) J_{\Phi}(rz) \Bigl[ - \eta' \Bigl( \frac{\hat{U}_{\delta} ( \Phi(rz))}{r^2} \Bigr) \frac{|\nabla^{M_j} \hat{U}_{\delta} ( \Phi(rz))|^2}{r^2} \Bigr] \, dz
    \]
    since $d \|V\| = \theta \, d \cH^n$ on $\cB_j \subset M_j$.
    By our previous observations, this expression is supported in the compact region $B_{4 ( c \delta)^{-1}}(0) \Subset \bR^n$ in the $z$-variable, so we may again use the $L^1$-density properties of $\cB_j$ and $\theta$ near $x_0$, along with $J_{\Phi}(r,z) \to 1$, in order to obtain a sequence of radii $r_k \downarrow 0$ with
    \begin{align*}
    & \mathbf{1}_{\cB_j} ( \Phi(r_k z)) \, \theta ( \Phi(r_k z)) \, J_{\Phi}(r_k z) \to \theta_0 \quad \text{in } \; L^1( B_{4 ( c \delta)^{-1}}(0)), \\
    & \quad \implies \quad \mathbf{1}_{\cB_j} ( \Phi(r_{k'} z)) \, \theta ( \Phi(r_{k'} z)) \, J_{\Phi}(r_{k'} z) \to \theta_0 \quad \text{for a.e.~ } z \in B_{4 (c \delta)^{-1}}(0)
    \end{align*}
    after passing to a further subsequence.
    This property follows from the fact that $0$ is a density point of $\Phi^{-1}(\cB_j)$ and a Lebesgue point of $\theta \circ \Phi$, while $J_{\Phi}(rz) \to 1$ locally uniformly.
    Finally, using the uniform convergence $\frac{1}{r^2} \hat{U}_{\delta} (\Phi(rz)) \to \frac{1}{2} A_{\delta} [z,z]$ and the properties 
\[
r_{\ell}^{-2} \hat{U}_{\delta} ( \Phi(r_{\ell}z)) \to \tfrac{1}{2} A_{\delta}[z,z], \qquad r_{\ell}^{-1} \nabla^{M_j} \hat{U}_{\delta}( \Phi(r_{\ell}z)) \to A_{\delta} z
\]
for a.e.~ $z$, due to~\eqref{eqn:u-hat-a.e.z}, we see that the whole non-negative rescaled integrand converges a.e.~ to $\theta_0 \bigl[ - \eta' \bigl( \tfrac{1}{2} A_{\delta}[z,z]  \bigr) |A_{\delta} z|^2 \bigr]$.
Therefore, choosing a sequence $r_k \downarrow 0$ that attains $\liminf_{r \downarrow 0} I_r$, we obtain
\begin{align*}
    \liminf_{r \downarrow 0} I_r &= \lim_{\ell \to \infty} I_{r_{\ell}} \geq \int_{\bR^n} \liminf_{\ell \to \infty} \mathbf{1}_{\cB_j} ( \Phi(rz)) \theta ( \Phi(rz)) J_{\Phi}(rz) \Bigl[ - \eta' \Bigl( \frac{\hat{U}_{\delta} ( \Phi(rz))}{r^2} \Bigr) \frac{|\nabla^{M_j} \hat{U}_{\delta} ( \Phi(rz))|^2}{r^2} \Bigr] \, dz \\
    &= \theta_0 \int_{\bR^n} - \eta' \bigl( \tfrac{1}{2} A_{\delta}[z,z] \bigr) |A_{\delta} z|^2 \, dz \\
    &= \theta_0 \, \textup{tr}\, A_{\delta} \, \int_{\bR^n} \eta \bigl( \tfrac{1}{2} A_{\delta}[z,z] \bigr) \, dz.
\end{align*}
by Fatou's lemma.
In the last step, we used $\nabla ( \frac{1}{2} A_{\delta}[z,z]) = A_{\delta} z$ and $\textup{div}(A_{\delta} z) = \textup{tr}\, A_{\delta}$ and integrated by parts, because the vector field is compactly supported.
    Returning to the identity~\eqref{eqn:psir-integral} for $\psi_r$, dividing by $r^n$, and taking the $\liminf_{r \downarrow 0}$ of both expressions, we conclude that
    \[
    \theta_0 ( g(x_0) + \Delta_V s_{\delta}(x_0)) \int_{\bR^n} \eta \bigl( \tfrac{1}{2} A_{\delta}[z,z] \bigr) \, dz \geq \theta_0 \, \on{tr} A_{\delta} \int_{\bR^n} \eta ( \tfrac{1}{2} A_{\delta}[z,z] ) \, dz.
    \]
    Eliminating the non-negative factors $\theta_0 > 0$ and $\int_{\bR^n} \eta ( \frac{1}{2} A_{\delta}[z,z]) \, dz > 0$ proves~\eqref{eqn:tr-a-delta-bound}.
    Thus, $\lambda - \Delta_V U \, \|V\| \geq 0$ is a non-negative Radon measure, as desired.
\end{proof}

We now combine these steps to prove Proposition~\ref{prop:fancy-integration-by-parts}.
\begin{proof}[Proof of Proposition~\ref{prop:fancy-integration-by-parts}]
We take $U_{\ve} := U \ast \chi_{\ve}$ to be a sequence of mollifications of $U$ and let $\psi$ be any compactly supported $C^1$ function defined in a bounded open neighborhood $\Omega$ of $\textup{spt} \, \|V\|$.
By~\cite{payne-redaelli}*{Lemma 2.4}, for $\ve>0$ sufficiently small, the function $U_{\ve}$ is locally semiconvex, $1$-Lipschitz, $U_{\ve} \to U$ locally uniformly as $\ve \downarrow 0$, and $D^2 U_{\ve} \geq - C \, \textup{id}$ on $\Omega$ for a constant $C$ independent of $\ve$.
On a $C^2$ sheet $M_j$ representing $V$ in the sense of Proposition~\ref{prop:menne}, we may therefore bound
\[
\Delta_V U_{\ve} = \textup{tr}_V \, D^2_V U_{\ve} = \Delta_{M_j}(U_{\ve}|_{M_j}) = \textup{tr}_{T_x M_j} \, D^2 U_{\ve} + \la H_V , \nabla U_{\ve} \rg \geq - nC - |H_V|.
\]
This property holds $\|V\|$-a.e.~ on $\Omega$, and hence $\lambda_{\ve} := (\Delta_V U_{\ve} + 2nC + |H_V|) \, \|V\| \mres \Omega$ is a positive Radon measure for every $\ve > 0$ sufficiently small, since $H_V \in L^1_{\textup{loc}}(\|V\|)$ due to the locally bounded first variation.
Writing $\delta V = - H_V \|V\| + \delta_s V$ with $\delta_s V = \zeta \, |\delta_s V|$ and using the first variation formula for the vector field $X = \psi D U_{\ve} \in C^1_c ( \Omega ; \bR^{n+m})$, we obtain
\begin{equation}\label{eqn:phi-U-eps-IBP}
\begin{split}
 \int \psi \Delta_V U_{\ve} \, d\|V\| &= -\int \la \nabla^V \psi, \nabla^V U_{\ve} \rg \, d\|V\| + \int \psi \la DU_{\ve} , \zeta \rg \, d |\delta_s V| \\
 &\leq \int |\nabla^V \psi| \, d\|V\|+ \int \psi \, d |\delta_s V|
\end{split}
\end{equation}
because $U_{\ve}$ is $1$-Lipschitz, so $\la DU_{\ve}, \zeta \rg \leq 1$ and $-\la \nabla^V \psi, \nabla^V U_{\ve} \rg \leq |\nabla^V \psi|$.
For every compact set $K \Subset \Omega$, we can use this bound with a non-negative cutoff function $\eta \in C_c^1(W)$ satisfying $\eta \equiv 1$ on $K$ to see that the masses $\lambda_{\ve}(K)$ are uniformly bounded.
Thus, we can extract a sequence of $\ve_k \downarrow 0$ and a positive Radon measure $\tilde{\lambda}$ with $\lambda_{\ve_k} \xrightharpoonup{*} \tilde{\lambda}$ as Radon measures on $W$.
The functions $U_{\ve_k}$ are $1$-Lipschitz, so after passing to a further subsequence, we can furthermore arrange that $DU_{\ve_k} \xrightharpoonup{*} q$ in $L^{\infty}_{\textup{loc}}( |\delta_s V| ; \bR^{n+m})$, for some $q$ with $|q| \leq 1$, $|\delta_sV|$-a.e.
We now define the signed Radon measure $\lambda$ by
\[
\lambda := \tilde{\lambda} - (2n C + |H_V|) \, \|V\|, \qquad \Delta_V U_{\ve_k} \, \|V \| \xrightharpoonup{*} \lambda.
\]
On every $C^2$ sheet $M_j$, the restrictions $U_{\ve_k}|_{M_j}$ are locally uniformly semiconvex and converge locally uniformly to $U|_{M_j}$.
In a $C^2$ coordinate chart, after adding a fixed quadratic function, the restrictions are convex.
Hence, the convergence theorem for gradients of convex functions, cf.~\cite{rockafellar}*{Theorem 24.5}, gives $\nabla^V U_{\ve_k} \to \nabla^V U$ $\|V\|$-a.e.~, and therefore strongly in $L^1_{\textup{loc}}(\|V\|)$ by dominated convergence.
Therefore, taking limits as $k \to \infty$ in the first line of equation~\eqref{eqn:phi-U-eps-IBP}, we obtain
\[
\int \psi \, d \lambda = - \int \la \nabla^V \psi, \nabla^V U \rg \, d\|V\| + \int \psi \la q, \zeta \rg \, d |\delta_s V|.
\]
Because $\tilde{\lambda}$ is a positive Radon measure, its Lebesgue decomposition with respect to $\|V\|$ has singular measure satisfying $\tilde{\lambda}_s \geq 0$.
Also, the choice of $\psi$ was arbitrary, so Proposition~\ref{prop:measure-zeta-q} applies to show that $\lambda - \Delta_V U \, \|V\| \geq 0$ is a non-negative Radon measure.
Thus, we can apply Fatou's lemma to see that any non-negative function $\varphi \in C_c^1(\Omega)$ satisfies
\[
\int \varphi \Delta_V U \, d\|V\| \leq \int \varphi \, d \lambda \leq \limsup_{k \to \infty} \int \varphi \Delta_V U_{\ve_k} \, d\|V\| \leq - \int \la \nabla^V \varphi, \nabla^V U \rg \, d \|V\| + \int \varphi \, d |\delta_s V|.
\]
Finally, the general inequality, where $\varphi \in \textup{Lip}_c(\Omega)$ is Lipschitz instead of $C^1_c(\Omega)$, follows from the approximation argument used in the beginning of Proposition~\ref{prop:measure-zeta-q}.
This completes the proof.
\end{proof}

\section{An optimal transportation scheme for varifolds}\label{section:optimal-transportation-varifolds}

In what follows, we consider integers $n \geq 2$ and $m \geq 2$ and assume that $V$ is a compactly supported $n$-varifold in $\bR^{n+m}$, so $\| V \| ( \bR^{m+n} )< \infty$ and the density function satisfies $\Theta^n ( \|V\|, x) \in (0,\infty)$ for $\|V\|$-a.e.~ $x$.
As in~\eqref{eqn:HV-decomposition}, we decompose the vector-valued Radon measure $\delta V$ as $\delta V = - H_V \|V\| + \delta_s V$ with respect to $\|V\|$.
We will prove the following result.
\begin{theorem}\label{thm:optimal-MSS}
Let $V$ be a compactly supported integral $n$-varifold with locally bounded first variation in $\bR^{n+m}$.
Every Lipschitz function $f$ defined in a neighborhood of $\on{spt} \|V\|$ satisfies
    \begin{align*}
    & n \Bigl( \frac{(n+m) \, |B^{n+m}|}{m |B^m|} \Bigr)^{\frac{1}{n}} \Bigl( \int |f|^{\frac{n}{n-1}} \, d \|V\| \Bigr)^{- \frac{1}{n}} \int \Theta^n ( \|V\|, x)^{\frac{1}{n}} |f|^{\frac{n}{n-1}} \, d \|V\|  \\
    &\leq \int \sqrt{|\nabla^V f|^2 + |H_V|^2 f^2}  \, d \|V\| + \int |f| \, d |\delta_s V|.
    \end{align*}
In particular, expressing the total first variation measure as $\| \delta V\| = |H_V| \, \|V\| + |\delta_s V|$, we have
    \[
n \Bigl( \frac{(n+m) |B^{n+m}|}{m |B^m|} \Bigr)^{\frac{1}{n}} \, \Bigl( \int |f|^{\frac{n}{n-1}} \, d\|V\| \Bigr)^{\frac{n-1}{n}} \leq \int |\nabla^V f| \, d\|V\| + \int |f| \, d \| \delta V \|.
\]
\end{theorem}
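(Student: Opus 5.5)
We follow the Brendle--Eichmair optimal transport scheme, adapted to the varifold using Proposition~\ref{prop:fancy-integration-by-parts}.

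\textbf{Reductions and normalization.} By approximation we may assume $f$ is positive and Lipschitz on a neighborhood of $\on{spt}\|V\|$; for general $f$ we apply the result to $\sqrt{f^2+\epsilon^2}$, or first reduce to $f\ge 0$ via $|f|$, and let $\epsilon\downarrow 0$. By homogeneity we normalize
\[
\int \sqrt{|\nabla^V f|^2+|H_V|^2f^2}\,d\|V\|+\int f\,d|\delta_sV| = n\int f^{\frac{n}{n-1}}\,d\|V\|.
\]
The second inequality of the theorem then follows from the first. Indeed, $\sqrt{a^2+b^2}\le a+b$, and $\Theta^n(\|V\|,x)\ge 1$ $\|V\|$-a.e.\ because $V$ is integral.

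\textbf{Transport step.} We consider the measure $\mu := f^{\frac{n}{n-1}}\|V\|$ and transport it to the uniform measure on the ball $B^{n+m}_1$. The target is scaled so that the total masses match; equivalently, we follow Brendle--Eichmair and transport $\mu$ to a suitable multiple of Lebesgue measure on a ball. Brenier--McCann gives a convex potential $u$ on $\bR^{n+m}$ with $\nabla u$ pushing $\mu$ forward, and we set $U:=u$ (or its Legendre dual, as in the Brendle--Eichmair construction). The potential is $1$-Lipschitz because the target is the unit ball, and it is convex, so it is locally semiconvex as Proposition~\ref{prop:fancy-integration-by-parts} requires. At a good point $x$ in the sense of Definition~\ref{def:alexandrov-point}, we write $\nabla u(x)=\nabla^V u(x)+y$ with $y\perp T_xV$. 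The key pointwise estimate, to be proved as Proposition~\ref{prop:laplacian-inequality}, comes from the Jacobian of the map $(x,y)\mapsto \nabla^V u(x)+y$ over the normal bundle. This map covers the ball $B^{n+m}$ for $y$ ranging over normal disks $\{|\nabla^V u|^2+|y|^2<1\}$. Using $D^2_V u - \la \mathrm{I\!I}_V, y\rg\ge 0$ on the relevant set, the area formula together with the AM--GM inequality
\[
\det\bigl(D^2_Vu-\la \mathrm{I\!I}_V,y\rg\bigr)\le \Bigl(\tfrac{1}{n}\bigl(\Delta_V u-\la H_V,y\rg\bigr)\Bigr)^n
\]
yields an a.e.\ inequality of the form
\[
\Delta_V u \le n f^{\frac1{n-1}} - \la H_V,\cdot\rg\text{-terms} - \frac{\la \nabla^V f,\nabla^V u\rg}{f}.
\]
This is combined with the bound $\la\nabla^V f,\nabla^V u\rg+f\la H_V,y\rg\le\sqrt{|\nabla^Vf|^2+|H_V|^2f^2}$, which uses $|\nabla^V u|^2+|y|^2\le 1$.

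\textbf{Integration and volume count.} Multiplying by $f$ and integrating, Proposition~\ref{prop:fancy-integration-by-parts} with $\varphi=f$ gives
\[
\int f\Delta_V u\,d\|V\| \le -\int\la\nabla^Vf,\nabla^Vu\rg\,d\|V\| + \int f\,d|\delta_sV|.
\]
This replaces the smooth integration by parts and is exactly where the boundary term $\int f\,d|\delta_sV|$ enters. The normalization then shows that the contact set $A$, where $D^2_Vu-\la\mathrm{I\!I}_V,y\rg\ge0$, is controlled. Following Brendle's argument, we integrate over $r>1$ the area of $\{\nabla^Vu(x)+y: |y|<r\}$ against the Jacobian bound $\bigl(f^{\frac1{n-1}}+\text{linear in }|y|\bigr)^n$, divide by $r^m$, and let $r\to\infty$. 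This produces the constant $\frac{(n+m)|B^{n+m}|}{m|B^m|}$.

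\textbf{Multiplicity.} Each point of $\bR^{n+m}$ in the image is attained from $\Theta^n(\|V\|,x)$ sheets counted with multiplicity, since $\|V\|=\theta\,\cH^n\mres\cB$. When the area formula is applied sheet by sheet via Proposition~\ref{prop:menne}, this yields the weight $\Theta^{1/n}$ after a H\"older step, and it explains the factor $\Theta^n(\|V\|,x)^{\frac1n}$ and the power $-\frac1n$ of $\int f^{\frac{n}{n-1}}$ in the statement.

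\textbf{Main obstacle.} We expect the hardest step to be justifying the area-formula/Jacobian computation for the merely Alexandrov-twice-differentiable potential on the $C^2$ sheets. This requires showing that the full subdifferential at non-good points contributes no measure, which we would handle with Proposition~\ref{prop:menne}(ii) and a Lipschitz-approximation argument on each sheet. A second difficulty is correctly tracking multiplicity to obtain the $\Theta^{1/n}$ weight.
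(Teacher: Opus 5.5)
Your proposal has a genuine gap: it splices Brendle's ABP proof into the transport framework, and the resulting chain of inequalities does not close.

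\textbf{The pointwise inequality points the wrong way.} The \emph{upper} bound on $\Delta_V u$ that you display is what the ABP proof gets from solving $\operatorname{div}(f\nabla u)=nf^{\frac{n}{n-1}}-\sqrt{|\nabla f|^2+|H|^2f^2}$ with Neumann data $\langle\nabla u,\eta\rangle=1$. That PDE is also what makes the normal map cover the ball. You never solve such an equation, and it is unclear how one would on a varifold with singular first variation; avoiding it is the point of the transport method. A transport potential gives an inequality in the opposite direction. Note also that Brenier--McCann does not apply to $\mu=f^{\frac{n}{n-1}}\|V\|$, which is concentrated on an $n$-rectifiable set. The paper instead uses Kantorovich duality: $U(x)=\sup_\xi(h(\xi)+\langle x,\xi\rangle)$ is convex and $1$-Lipschitz, and the optimal plan lives on the contact set $\Gamma_U$. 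This gives the local mass balance $\mu_f(Q_r\cap\cB_j)\le\nu(G_r)$ of Lemma~\ref{lemma:BE-lemma-7}. Blowing up at a good point gives $\Theta^n(\|V\|,x_0)f(x_0)^{\frac{n}{n-1}}\le\int_{S(x_0)}\det(D^2_{M_j}\hat U-\langle\mathrm{I\!I}_{M_j},y\rangle)\,\rho\,dy$. AM--GM then turns this into the \emph{lower} bound
\[
n\alpha_\rho^{-\frac1n}\Theta^n(\|V\|,x)^{\frac1n}f^{\frac{n}{n-1}}\le f\Delta_VU+\langle\nabla^Vf,\nabla^VU\rangle+\sqrt{|\nabla^Vf|^2+|H_V|^2f^2}.
\]
Only a lower bound can be paired with the upper bound of Proposition~\ref{prop:fancy-integration-by-parts} applied to $\psi=\varphi f$. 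Your displayed bound and that inequality point the same way, so nothing follows. Also, the weight $\Theta^{1/n}$ is just the $n$-th root in AM--GM. It arises because $\mu_f$ has density $\theta f^{\frac{n}{n-1}}$ with respect to $\cH^n$ on the single sheet $M_j$; no image multiplicity or H\"older step is involved.

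\textbf{The sharp constant is not reached.} Transporting to the uniform measure on the ball gives $\alpha_\rho=|B^m|/|B^{n+m}|$. That yields only the constant $n(|B^{n+m}|/|B^m|)^{1/n}$, which is off by the factor $(m/(n+m))^{1/n}$. Your ``$r\to\infty$'' count has no meaning for a transport potential with a fixed target, since rescaling the target ball merely rescales $U$. In Brendle's proof that count uses the map $x+r(\nabla u+y)$ and points $p$ with $\sigma r<|p-x|<r$ for all $x$. It divides by $r^{n+m}$, and the factor $(n+m)/m$ appears only after sending $\sigma\to1$. The missing idea in the transport setting is to choose radial densities $\rho_k$ whose mass concentrates in $\{1-k^{-1}\le|\xi|^2\le1\}$. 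Then
\[
\alpha_{\rho_k}\le|B^m|\bigl(\sup_{[0,1-k^{-1}]}\rho_k+\tfrac m2k^{-1}\sup_{[1-k^{-1},1]}\rho_k\bigr)\to\frac{m|B^m|}{(n+m)|B^{n+m}|}.
\]
Here $m\ge2$ enters through the estimate $(1-|z|^2)^{m/2}-(1-|z|^2-k^{-1})_+^{m/2}\le\frac m2k^{-1}$. One then applies Proposition~\ref{prop:weighted-MSS} for each $\rho_k$, takes $\varphi\equiv1$ near $\on{spt}\|V\|$, and lets $k\to\infty$.
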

We will establish Theorem~\ref{thm:optimal-MSS} using an optimal transportation estimate.
Let $\rho : [0, \infty) \to (0,\infty)$ be a continuous function with $\int_{\bar{B}^{m+n}} \rho ( |\xi|^2) \, d \xi = 1$, with $\bar{B}^{m+n} = \{ \xi \in \bR^{m+n} : |\xi| \leq 1 \}$ the closed unit ball in $\bR^{n+m}$, and
\begin{equation}\label{eqn:alpha-rho-constant}
   \alpha_{\rho} := \sup_{z \in \bR^n} \int_{ \{ y \in \bR^m : |z|^2 + |y|^2 \leq 1 \} } \rho ( |z|^2 + |y|^2) \, dy. 
\end{equation}
We will prove the following result.
\begin{proposition}\label{prop:weighted-MSS}
Let $\varphi$ and $f$ be Lipschitz functions defined in a neighborhood of $\on{spt} \|V\|$, with $\varphi$ a non-negative compactly supported function.
Then, it holds that
\begin{align*}
    &n \alpha_{\rho}^{- \frac{1}{n}} \Bigl( \int |f|^{\frac{n}{n-1}} \, d\|V\| \Bigr)^{- \frac{1}{n}} \int \Theta^n ( \|V\|, x)^{\frac{1}{n}} |f|^{\frac{n}{n-1}} \varphi(x) \, d \|V\|(x) \\
    &\leq \int \varphi \sqrt{ |\nabla^V f|^2 + |H_V|^2 f^2 } \, d \|V\| + \int |f| \, \varphi \, d |\delta_s V| + \int |f| \, |\nabla^V \varphi| \, d \|V\|.
\end{align*}
\end{proposition}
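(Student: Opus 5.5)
Following Brendle--Eichmair, I will solve an optimal transport problem from the measure $\mu := |f|^{\frac{n}{n-1}}\,\|V\|$, normalized to unit mass, to the target measure $\nu := \rho(|\xi|^2)\,d\xi$ on $\bar B^{n+m}$. By homogeneity I first normalize $\int |f|^{\frac{n}{n-1}}\,d\|V\| = 1$. I may also assume $f \geq 0$, replacing $f$ by $|f|$, and then approximate by $f+\epsilon$ so that $f>0$ on $\operatorname{spt}\varphi$ and $\log f$ is Lipschitz there.

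Since $\nu$ is absolutely continuous, McCann's theorem applied in the reversed direction gives a convex potential. Equivalently, I take a Kantorovich pair for the cost $-\langle x,\xi\rangle$ and let $U$ be the $c$-concave potential on $\bR^{n+m}$. Its gradient lies in $\bar B^{n+m}$, so $U$ is convex and $1$-Lipschitz, hence locally semiconvex. The transport plan $\gamma$ is supported in $\{(x,\xi): U(x)+U^*(\xi)=\langle x,\xi\rangle\}$, so $\xi\in\partial U(x)$.

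At each good point $x$ (Definition~\ref{def:alexandrov-point}), I decompose $\xi = \nabla^V U(x) + y$ with $y\in (T_xV)^\perp$. The contact set above $x$ lies in $\partial U(x)$, which is a convex set whose tangential part is fixed. The key local estimate, modeled on Brendle's argument, will be a Jacobian inequality. Consider the map $\Phi(x,y) = \nabla^V U(x) + y$ defined on the normal-bundle region $A = \{(x,y): x \text{ good},\ y\perp T_xV,\ |\nabla^V U|^2+|y|^2\le 1,\ \nabla^V U(x)+y\in\partial U(x)\}$.

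The first step is the a.e.\ Laplacian inequality, which is Proposition~\ref{prop:laplacian-inequality}. The transport condition says that $\nu$ is the pushforward of $\mu$, with mass split along fibers. Using the Jacobian $\det(D^2_VU(x) - \langle \mathrm{I\!I}_V(x),y\rangle)$ and area formula arguments on each $C^2$ sheet, I aim to show
\[
\Delta_V U(x) - \langle H_V(x),y\rangle \ \geq\ n\,\big(\Theta^n(\|V\|,x)\,\alpha_\rho^{-1}\big)^{1/n} f(x)^{\frac{1}{n-1}}
\]
for $\|V\|$-a.e.\ $x$ and a suitable $y$. To get this, integrate $\rho$ over the normal fiber and use the definition of $\alpha_\rho$ to bound the fiber mass, then apply AM--GM to the determinant. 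The exponent of $\Theta$ enters because $\|V\|=\theta\,\cH^n$ on the sheet. The sheets counted with multiplicity must be handled via pairwise disjoint $\cB_j$ and property (iii).

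Next I introduce the $f$-weighting. The inequality must involve $\operatorname{div}(f\nabla^V U)$, so in the optimal transport I use the measure $f^{\frac{n}{n-1}}$. Brendle's trick is to transport with $U$ solving a problem where the drift $\nabla^V f/f$ appears. Concretely, I would follow Brendle--Eichmair: consider the optimal map for densities $f^{n/(n-1)}$, and get
\[
n\alpha_\rho^{-1/n}\theta^{1/n} f^{\frac{n}{n-1}} \le f\,\Delta_V U + \langle \nabla^V f,\nabla^V U\rangle + \sqrt{|\nabla^V f|^2+|H_V|^2f^2}
\]
a.e. Here the last term bounds $-\langle \nabla^V f,\nabla^V U\rangle - f\langle H_V,y\rangle$, using $|\nabla^VU|^2+|y|^2\le1$ and Cauchy--Schwarz.

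Then I multiply by $\varphi$ and integrate. I apply Proposition~\ref{prop:fancy-integration-by-parts} with test function $\varphi f\ge0$, which is Lipschitz and compactly supported, to get
\[
\int \varphi f\,\Delta_VU \le -\int\langle\nabla^V(\varphi f),\nabla^VU\rangle + \int \varphi f\,d|\delta_sV|.
\]
The $\langle \nabla^V f,\nabla^VU\rangle\varphi$ terms cancel. The remaining term is $-f\langle\nabla^V\varphi,\nabla^VU\rangle\le f|\nabla^V\varphi|$. This yields the claim after undoing the normalization. The main obstacle is the first step: deriving the pointwise Jacobian inequality on a varifold with multiplicity and overlapping sheets. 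I would try to reduce it to a single sheet via density points and the area formula. Contact points off good points must be shown to carry no $\nu$-mass, using the Lipschitz/semiconcave structure of $U^*$ and the disjoint covering by $\cB_j$.
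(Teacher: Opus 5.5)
Your proposal is correct and follows the paper's proof. You take the $\|V\|$-a.e.\ inequality $n\alpha_\rho^{-1/n}\Theta^n(\|V\|,x)^{1/n}f^{\frac{n}{n-1}}\le f\Delta_VU+\langle\nabla^Vf,\nabla^VU\rangle+\sqrt{|\nabla^Vf|^2+|H_V|^2f^2}$ from Proposition~\ref{prop:laplacian-inequality}, multiply by $\varphi$, and apply Proposition~\ref{prop:fancy-integration-by-parts} with $\psi=\varphi f$, so that the $\varphi\langle\nabla^Vf,\nabla^VU\rangle$ terms cancel and the remainder is bounded by $\int f|\nabla^V\varphi|\,d\|V\|$; the reductions from $f>0$ to $f\ge 0$ via $f+\epsilon$ and dominated convergence (when $\int f^{\frac{n}{n-1}}\,d\|V\|>0$), and to general $f$ via $|f|$, are the same as in the paper. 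The Jacobian step you flag as the main obstacle is already Proposition~\ref{prop:laplacian-inequality}, proved locally at each good point by the Brendle--Eichmair cube argument (the bounds $\hat\omega(r)=o(r)$ and $\hat\delta(r)=o(1)$ confine the contacts over every point of $Q_r$, good or not, to $G_r$), so no global statement about the $\nu$-mass of contacts over non-good points is needed.
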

To prove Proposition~\ref{prop:weighted-MSS}, we will utilize an optimal transportation scheme.
We define a Borel measure $\nu$ on the unit ball $\bar{B}^{m+n}$ by $\nu(G) := \int_G \rho( |\xi|^2) \, d \xi$ for every Borel set $G \subset \bar{B}^{m+n}$.
Equivalently, $d \nu(\xi) = \rho ( |\xi|^2) \, d \xi$, and $\nu$ is a probability measure.
We will first prove Proposition~\ref{prop:weighted-MSS} when the function $f$ is everywhere positive.
In what follows, we define a probability measure $\mu_f$ on $\bR^{m+n}$ by
\[
d \mu_f = \frac{f(x)^{\frac{n}{n-1}}}{ \int f^{\frac{n}{n-1}} \, d \|V\| } d \|V\|(x),
\]
where $\int f^{\frac{n}{n-1}} \, d \|V\| < \infty$ because $f$ is Lipschitz and $V$ is compactly supported.
After scaling $f$ by a constant positive multiple, we can assume for simplicity that $\int f^{\frac{n}{n-1}} \, d \|V\| = 1$, so $\mu_f = f^{\frac{n}{n-1}} \|V\|$.
We extend our notion of good points $x_0$ from Definition~\ref{def:alexandrov-point} to assume that $f$ is differentiable at $x_0$ and $\nabla^V f(x_0)$ is well-defined.
By Rademacher's theorem, the resulting set has full $\|V\|$-measure.

We denote by $\Pi(\mu_f, \nu)$ the set of all transport plans from $\mu$ to $\nu$, namely probability measures 
\[
\Pi( \mu_f, \nu) := \{ \gamma \in \cP ( \on{spt} \|V\| \times \bar{B}^{m+n}) : \; (\pi_{\on{spt} \|V\|})_{\#} \gamma = \mu_f , \; (\pi_{\bar{B}^{m+n}})_{\#} \gamma = \nu \}.
\]
Let $\cJ$ denote the set of pairs $(u,h)$ such that $u \in L^1( \| V\|)$ is an integrable function on $\on{spt} \|V\|$, $h$ is an integrable function on $\bar{B}^{m+n}$, and 
\begin{equation}\label{eqn:u-h-property}
u(x) - h(\xi) - \la x, \xi \rg \geq 0
\end{equation}
for $\mu_f \otimes \nu$-a.e.~ point $(x,\xi)$ with $x \in \on{spt} \|V\|$ and $\xi \in \bar{B}^{m+n}$.
Arguing as in~\cite{brendle-eichmair}*{\S 2} and following~\cite{villani}*{\S 10 and \S 14}, we can find a pair $(u,h) \in \cJ$ which maximizes the functional
\[
\int_{\bar{B}^{m+n}} h \, d \nu - \int_{\textup{spt} \, \|V\|} u \, d \mu_f,
\]
and such that $h$ is Lipschitz continuous.
After replacing $u$ by its $c$-transform as in~\cite{villani}*{Theorem 5.10}, we may assume that $u$ is the restriction to $\on{spt} \| V\|$ of the ambient convex function
\begin{equation}\label{eqn:u-property}
    U(x) = \sup_{\xi \in \bar{B}^{n+m}} ( h(\xi) + \la x, \xi \rg)
\end{equation}
which is convex and $1$-Lipschitz on $\bR^{m+n}$.
Moreover, we can find an optimal transport plan $\gamma \in \Pi(\mu_f, \nu)$ for the cost $c(x,\xi) = - \la x, \xi \rg$, for which Kantorovich duality implies 
\[
U(x) - h(\xi) - \la x, \xi \rg = 0 \qquad \text{for } \; \gamma\text{-a.e.~  } \; (x,\xi). 
\]
Equivalently, the measure $\gamma$ is concentrated on the contact set
\begin{equation}\label{eqn:contact-set-Gamma-U}
\Gamma_U := \{ (x,\xi) \in \on{spt} \|V\| \times \bar{B}^{n+m} : U(x) - h(\xi) - \la x, \xi \rg = 0 \}.
\end{equation}
Since $V$ is rectifiable, Rademacher's theorem implies that $U$ has a tangential derivative $\nabla^VU$ at $\| V\|$-a.e.~ point, and $|\nabla^VU| \leq 1$.
Moreover, the equality~\eqref{eqn:u-property} implies that $u = U|_{\on{spt} \|V\|}$ satisfies the property~\eqref{eqn:u-h-property} for every $(x,\xi) \in \on{spt} \|V\| \times \bar{B}^{m+n}$.
Since $U$ restricts to a semiconvex function on each $C^2$ sheet $M_j$ representing $V$, Proposition~\ref{prop:menne} applies to the varifold $V$ and the map $U$.
\begin{lemma}\label{lemma:E-Sigma-perturb}
    Let $E$ be a Borel subset of $\textup{spt} \, \|V\|$ and let $G$ be a Borel subset of $\bar{B}^{m+n}$ such that $U(x) - h(\xi) - \la x, \xi \rg > 0$ for all $x \in E$ and $\xi \in \bar{B}^{n+m} \setminus G$.
    Then, $\mu_f(E) \leq \nu(G)$.
\end{lemma}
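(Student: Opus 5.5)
The plan is to use the optimal transport plan $\gamma \in \Pi(\mu_f,\nu)$ fixed above, together with the fact that $\gamma$ is concentrated on the contact set $\Gamma_U$ from \eqref{eqn:contact-set-Gamma-U}. Compare the $\gamma$-measures of $E \times \bar{B}^{m+n}$ and $\bar{B}^{m+n}$-fibres: since the first marginal of $\gamma$ is $\mu_f$, we have
\[
\mu_f(E) = \gamma\bigl( E \times \bar{B}^{m+n} \bigr) = \gamma\bigl( E \times G \bigr) + \gamma\bigl( E \times (\bar{B}^{m+n} \setminus G) \bigr).
\]
The first step is to show that the second term vanishes. By hypothesis, every pair $(x,\xi) \in E \times (\bar{B}^{m+n}\setminus G)$ satisfies $U(x) - h(\xi) - \la x,\xi\rg > 0$, so this product set is disjoint from $\Gamma_U$. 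Since $\gamma(\Gamma_U^c) = 0$ by Kantorovich duality (the equality $U(x)-h(\xi)-\la x,\xi\rg=0$ for $\gamma$-a.e.~$(x,\xi)$), we get $\gamma(E \times (\bar{B}^{m+n}\setminus G)) = 0$.

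The second step uses the second marginal: $\gamma(E\times G) \leq \gamma(\on{spt}\|V\| \times G) = \nu(G)$, since $(\pi_{\bar{B}^{m+n}})_{\#}\gamma = \nu$. Combining the two steps gives $\mu_f(E) \leq \nu(G)$.

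The only point requiring care, and hence the main obstacle, is measurability. The sets $E\times G$ and $E\times(\bar B^{m+n}\setminus G)$ are Borel products, so they pose no problem. The contact set $\Gamma_U$ is closed, hence Borel, because $U$ is continuous (convex and $1$-Lipschitz) and $h$ is Lipschitz. One should also check that the duality statement gives the contact equality $\gamma$-a.e.\ with the $c$-transformed $U$, not merely with the original $u$. This holds because $U(x)-h(\xi)-\la x,\xi\rg \geq 0$ everywhere by \eqref{eqn:u-property}, while its $\gamma$-integral equals $\int U\,d\mu_f - \int h\,d\nu = 0$ at the optimum. A nonnegative function with zero integral vanishes $\gamma$-a.e., which is what I would write out if the paper's earlier statement is viewed as insufficiently explicit.
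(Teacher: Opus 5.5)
Your proof is correct and is essentially the paper's argument. The paper writes $\mu_f(E)=\gamma(E\times\bar{B}^{n+m})=\gamma(\Gamma_U\cap(E\times\bar{B}^{n+m}))\leq\gamma(\on{spt}\|V\|\times G)\leq\nu(G)$, using $\Gamma_U\cap(E\times\bar{B}^{n+m})\subset E\times G$; this is your splitting, with the piece off $G$ removed because $\gamma$ is concentrated on $\Gamma_U$.

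There is one small slip in your side remark. The $\gamma$-integral of $U(x)-h(\xi)-\la x,\xi\rg$ equals $\int U\,d\mu_f-\int h\,d\nu-\int\la x,\xi\rg\,d\gamma$, and it is this full expression that vanishes by the absence of a duality gap. The quantity $\int U\,d\mu_f-\int h\,d\nu$ alone need not be zero.
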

\begin{proof}
The optimal transport plan $\gamma \in \Pi(\mu_f,\nu)$ is concentrated on the contact set $\Gamma_U$ of~\eqref{eqn:contact-set-Gamma-U}, so $\gamma(\Gamma_U) = 1$.
Our assumptions imply that every contact point over $E$ has its second coordinate in $G$, namely $\Gamma_U \cap ( E \times \bar{B}^{n+m}) \subset E \times G \subset \on{spt} \|V\| \times G$.
We therefore obtain
\begin{align*}
\mu_f(E) &= \gamma( E \times \bar{B}^{n+m}) = \gamma ( \Gamma_U \cap (E \times \bar{B}^{n+m})) \leq \gamma ( \on{spt} \|V\| \times G) \leq \nu(G).
\end{align*}
This proves our assertion.
\end{proof}

In what follows, we fix a good point $x_0 \in \textup{spt} \, \|V\|$, as in Definition~\ref{def:alexandrov-point}, and a $C^2$ sheet $M_j$ representing $V$ to second order near $x_0$, with $x_0 \in \cB_j$.
Since $M_j$ is $C^2$, there is a constant $K_j < \infty$ and a small $\rho>0$ such that for every $x,z \in M_j \cap B_{\rho}(x_0)$ and every $\eta \in T_x^{\perp} M_j$ with $|\eta| \leq1$, we have $\la z-x, \eta \rg \geq - K_j d_j(x,z)^2$, where $d_j$ is the distance function on $M_j$.
We define the sheet-wise quadratic subdifferential of $U|_{M_j}$ at $x \in M_j \cap B_{\rho}(x_0)$ by
\[
\partial_{M_j} U(x) := \{ v \in T_x M_j : U(z) - U(x) - \la z-x, v \rg \geq - K_j d_j(x,z)^2, \; \text{for all } \; z \in M_j \cap B_{\rho} (x_0) \}.
\]
Given a vector $\xi \in \bR^{n+m}$ and a good point $x \in \textup{spt} \, \|V\|$, we denote by $\xi^{\top} := \pi_{T_x V} \xi$ the projection of $\xi$ to the approximate tangent space of the varifold $V$.
\begin{lemma}\label{lemma:BE-lemma-4}
Consider a point $x \in M_j \cap B_{\rho}(x_0)$ and some $\xi \in \bar{B}^{n+m}$.
If $U(x) - h(\xi) - \la x, \xi \rg = 0$, then $\pi_{T_x M_j} \xi \in \partial_{M_j} U(x)$.
In particular, if $x \in \textup{spt} \|V\| \cap M_j \cap B_{\rho}(x_0)$ is a good point, then $\xi^{\top} \in \partial_{M_j} U(x)$.
\end{lemma}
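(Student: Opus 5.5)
The plan is to compare $U$ at a point $z$ with its value at the contact point $x$, using the global supporting-hyperplane structure of $U$, and then convert the normal component of $\xi$ into the quadratic error allowed in the definition of $\partial_{M_j} U(x)$.

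First, I use the definition~\eqref{eqn:u-property} of $U$ as a supremum over $\bar{B}^{n+m}$. For any $z \in \bR^{n+m}$ we have $U(z) \geq h(\xi) + \la z, \xi \rg$. The contact hypothesis gives $U(x) = h(\xi) + \la x, \xi \rg$, and subtracting yields the global subgradient inequality
\[
U(z) - U(x) - \la z - x, \xi \rg \geq 0 \qquad \text{for all } z \in \bR^{n+m}.
\]
In particular this holds for every $z \in M_j \cap B_{\rho}(x_0)$.

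Next, I decompose $\xi = \pi_{T_x M_j}\xi + \eta$, where $\eta := \xi - \pi_{T_x M_j}\xi \in T_x^{\perp} M_j$. Since $|\xi| \leq 1$, also $|\eta| \leq 1$. By the choice of $K_j$ and $\rho$ stated just before the lemma, every $z \in M_j \cap B_{\rho}(x_0)$ satisfies $\la z - x, \eta \rg \geq -K_j d_j(x,z)^2$. Hence
\[
U(z) - U(x) - \la z - x, \pi_{T_x M_j}\xi \rg \geq \la z - x, \eta \rg \geq -K_j d_j(x,z)^2.
\]
Since $\pi_{T_x M_j}\xi \in T_x M_j$, this is exactly the statement $\pi_{T_x M_j}\xi \in \partial_{M_j} U(x)$.

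For the second assertion, take $x$ to be a good point. Definition~\ref{def:alexandrov-point}(i) gives $T_x V = T_x M_j$ for the representing sheet, so $\xi^{\top} = \pi_{T_x V}\xi = \pi_{T_x M_j}\xi$, and the first part applies. The one point needing care is that $x$ might be good with respect to a different representing sheet than $M_j$. In that case I would use Definition~\ref{def:alexandrov-point}(iv) together with Proposition~\ref{prop:menne}: at good points, tangent planes of all sheets through $x$ agree, so $T_x M_j = T_x V$ still holds.

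There is no substantial obstacle beyond this bookkeeping. The normal-component estimate is supplied by the constant $K_j$, which is fixed in the setup before the lemma.
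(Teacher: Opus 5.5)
Your proof is correct and is the same argument the paper uses; the paper simply cites Brendle--Eichmair's Lemma 4. The supremum definition of $U$ gives the global subgradient inequality $U(z)-U(x)-\la z-x,\xi\rg\ge 0$ at a contact point, the normal part of $\xi$ is absorbed into the $-K_j d_j(x,z)^2$ error, and the second assertion reduces to the first because $T_xV=T_xM_j$ at good points.
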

\begin{proof}
    The proof is identical to~\cite{brendle-eichmair}*{Lemma 4}.
\end{proof}
Let $\hat{U}$ be a smooth function on $M_j$ with $|U(x) - \hat{U}(x)| \leq o ( d_j(x, x_0)^2)$ as $x \to x_0$.
For each $r \in (0, \rho)$, we denote by $\hat{\omega}$ the smallest non-negative real number $\omega$ with the property that
\[
| \zeta - \nabla^{M_j} \hat{U}(x) | \leq \omega, \qquad \text{for all} \quad x \in M_j \cap \bigl\{ d_j(x,x_0) \leq \sqrt{n} r \bigr\} \quad \text{and} \quad \zeta \in \partial_{M_j} U(x).
\]
We denote by $\hat{\delta}(r)$ the smallest non-negative real number $\delta$ with the property that
\[
D^2_{M_j} \hat{U}(x) - \la \mathrm{I\!I}_{M_j}(x), \pi_{T_x^{\perp} M_j} \xi \rg \geq - \delta \, g_{M_j}, \qquad \forall \; (x,\xi) \in \Gamma_U, \quad x \in M_j \cap \{ d_j(x,x_0) \leq \sqrt{n} r \}.
\]
\begin{lemma}\label{lemma:BE-5-and-6}
    The functions $\hat{\omega}, \hat{\delta} : (0,\rho) \to [0,\infty)$ are increasing and $\lim_{r \downarrow 0} \frac{\hat{\omega}(r)}{r} = \lim_{r \downarrow 0} \hat{\delta}(r) = 0$.
\end{lemma}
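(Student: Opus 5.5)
The plan follows \cite{brendle-eichmair}*{Lemmas 5 and 6}. Monotonicity is immediate: $\hat{\omega}(r)$ and $\hat{\delta}(r)$ are infima of constants controlling a family of conditions indexed by points $x$ with $d_j(x,x_0) \leq \sqrt{n}\, r$. That family only grows with $r$, so both functions are non-decreasing.

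\textbf{Choice of $\hat U$ and uniform Taylor bounds.} I take $\hat{U}$ to be the second-order Taylor polynomial of $U|_{M_j}$ at $x_0$, written in a $C^2$ chart. This is admissible because $x_0$ is a good point, so $U|_{M_j}$ has an Alexandrov expansion there. Then $\hat U$ has bounded derivatives up to order three on the chart, and the hypothesis reads $|U(z)-\hat U(z)| \leq \epsilon(r) r^2$ for $d_j(z,x_0)\leq 3\sqrt n\, r$, with $\epsilon(r)\to 0$.

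The only other input is a uniform expansion along $M_j$. For $x$ near $x_0$, a unit vector $e \in T_xM_j$, $|t|\leq 1$, and $z = \exp^{M_j}_x(t r e)$:
\begin{itemize}
\item[(a)] $z - x = t r e + O(t^2 r^2)$;
\item[(b)] $\la z - x, \xi\rg = t r \la e, \xi^{\top}\rg + \tfrac12 t^2 r^2 \la \mathrm{I\!I}_{M_j}(x)(e,e), \xi^{\perp}\rg + o(r^2)$, uniformly in $|\xi|\leq 1$.
\end{itemize}
Here $o(r^2)$ is uniform because $M_j$ is $C^2$, so $\mathrm{I\!I}_{M_j}$ is uniformly continuous near $x_0$. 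The sign convention for $\mathrm{I\!I}$ is the one matching the definition of $\hat\delta$.

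\textbf{Claim $\hat\omega(r)/r \to 0$.} Argue by contradiction. Suppose there are $r_k \downarrow 0$, points $x_k$ with $d_j(x_k,x_0)\leq \sqrt n\, r_k$, vectors $\zeta_k\in\partial_{M_j}U(x_k)$, and $\varepsilon>0$ with $|\zeta_k - \nabla^{M_j}\hat U(x_k)| \geq \varepsilon r_k$. Set $e_k := (\zeta_k - \nabla^{M_j}\hat U(x_k))/|\zeta_k - \nabla^{M_j}\hat U(x_k)|$ and test the subdifferential inequality at $z_k = \exp_{x_k}(-t r_k e_k)$, with $t\in(0,1)$ fixed. Replacing $U$ by $\hat U$ at $z_k$ and $x_k$ costs $2\epsilon(r_k) r_k^2$, and Taylor-expanding $\hat U$ around $x_k$ gives
\[
- t r_k \la \nabla^{M_j}\hat U(x_k) - \zeta_k, e_k\rg \cdot(-1) \;\leq\; \dots,
\]
which yields $t r_k \cdot \varepsilon r_k \leq C t^2 r_k^2 + o(r_k^2)$. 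Dividing by $t r_k^2$ and choosing $t < \varepsilon/(2C)$ gives a contradiction for large $k$. The constant $C$ depends on $K_j$, on the $C^2$ norm of $\hat U$, and on (a).

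\textbf{Claim $\hat\delta(r)\to 0$.} Fix $(x,\xi)\in\Gamma_U$ with $d_j(x,x_0)\leq\sqrt n\, r$, and set $\phi(z) := \hat U(z) - \la z,\xi\rg - h(\xi)$ on $M_j$. By \eqref{eqn:u-property}, $U(z) - \la z,\xi\rg - h(\xi) \geq 0$ with equality at $z = x$. The $\hat U$-approximation then gives $\phi(z) - \phi(x) \geq -2\epsilon(r) r^2$ for $d_j(z,x)\leq r$.

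Next, Lemma~\ref{lemma:BE-lemma-4} gives $\pi_{T_xM_j}\xi \in \partial_{M_j}U(x)$. By the definition of $\hat\omega$, this yields $|\nabla^{M_j}\phi(x)| = |\nabla^{M_j}\hat U(x) - \pi_{T_xM_j}\xi| \leq \hat\omega(r) = o(r)$.

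Now expand $\phi$ along $z_\pm = \exp_x(\pm r e)$ using (b), and add the two inequalities. The first-order terms cancel, leaving
\[
r^2 \bigl( D^2_{M_j}\hat U(x) - \la \mathrm{I\!I}_{M_j}(x), \pi_{T^\perp_x M_j}\xi\rg\bigr)(e,e) \geq -4\epsilon(r) r^2 - o(r^2),
\]
uniformly in $(x,\xi)$ and $e$. Hence $\hat\delta(r) \leq 4\epsilon(r) + o(1) \to 0$.

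\textbf{Main obstacle.} The delicate part is uniformity. All $o(r^2)$ errors must be independent of the base point $x$ (not only of $x_0$), of $\xi \in \bar B^{n+m}$, and of the direction $e$. Test points at distance $\leq r$ from $x$ stay within $2\sqrt n\, r$ of $x_0$, so the Alexandrov expansion at the single point $x_0$ covers them. The geometric error terms are handled by the uniform $C^2$ bounds on $M_j$ near $x_0$ and the continuity of $\mathrm{I\!I}_{M_j}$. I would state these two uniformity facts explicitly before running the two contradiction/Taylor arguments.
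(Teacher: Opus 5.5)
Your proposal is correct and is essentially the paper's argument: the paper just invokes Brendle--Eichmair's Lemmas 5 and 6 on the sheet $M_j$, using the Alexandrov expansion at $x_0$ and the second-order supporting inequality at contact points, which are exactly the two ingredients you make explicit (with the uniformity in $x$, $\xi$, $e$ correctly identified as the real issue). The only slip is a sign in the $\hat\omega$ step: since $e_k$ points from $\nabla^{M_j}\hat U(x_k)$ toward $\zeta_k$, you must test at $\exp_{x_k}(+t r_k e_k)$, because the minus sign makes the inequality vacuous, and that corrected choice gives exactly your bound $t\varepsilon r_k^2 \le C t^2 r_k^2 + o(r_k^2)$.
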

\begin{proof}
    These properties follow from~\cite{brendle-eichmair}*{Lemma 5 and Lemma 6} applied to the sheet $M_j$, by using the Alexandrov expansion $U - \hat{U} = o ( d_j(-,x_0)^2)$ and the second-order supporting inequality at contact points.
    The latter property applies here due to the continuity of $\mathrm{I\!I}_{M_j}$ and $D^2_{M_j} \hat{U}$.
\end{proof}

In what follows, we let $E_r := Q_r \cap \cB_j \cap \textup{spt} \, \|V\|$, so $\|V\|(E_r) = \theta(x_0) r^n + o(r^n)$ by Lemma~\ref{lemma:v-ae-q-gj}.
We define a subset $A_r \subset T^{\perp} M_j|_{Q_r}$ of the normal bundle of $Q_r$ inside $M_j$ by
\begin{equation}\label{eqn:ar-define}
\begin{split}
A_r := \{ (z,\eta): z \in Q_r, \eta \in T^{\perp}_z M_j:  \; & |\nabla^{M_j} \hat{U}(z)|^2 + |\eta|^2 \leq (1 + \hat{\omega}(r))^2, \\
&D^2_{M_j} \hat{U}(z) - \la \mathrm{I\!I}_{M_j}(z), \eta\rg \geq - \hat{\delta}(r) \, g_{M_j} \},
\end{split}
\end{equation}
where $\la \mathrm{I\!I}_{M_j}(z), \eta \rg$ is the two-form $\la \mathrm{I\!I}_{M_j}(z), \eta \rg(v,w) := \la \mathrm{I\!I}_{M_j}(z) (v,w), \eta \rg$.
We note that $|\eta| \leq 1 + \hat{\omega}(r)$ for every $(z,\eta) \in A_r$, so $A_r \Subset T^{\perp} M_j|_{Q_r}$ because $Q_r$ is compact and the fiber condition is closed.
We form the map $\Phi_r : A_r \to \bR^{n+m}$ by $\Phi_r(z,\eta) := \nabla^{M_j} \hat{U}(z) + \eta$ and define the compact set
\[
G_r := \{ \xi \in \bar{B}^{n+m} :  \exists \; (z,\eta) \in A_r \; \text{ with } \; |\xi - \Phi_r(z,\eta)| \leq \hat{\omega}(r) \}
\]
as the intersection of $\bar{B}^{n+m}$ with the radius-$\hat{\omega}(r)$ tubular neighborhood of $\Phi_r(A_r)$.
\begin{lemma}\label{lemma:BE-lemma-7}
Let $r \in (0, \rho)$.
Then, $U(x) - h(\xi) - \la x, \xi \rg > 0$ for every $x \in Q_r \cap \textup{spt} \, \|V\|$ and every $\xi \in \bar{B}^{n+m} \setminus G_r$.
Consequently, for every Borel set $E \subset Q_r \cap \textup{spt} \, \|V\|$, we have $\mu_f (E) \leq \nu(G_r)$.
In particular, this applies to $E_r = Q_r \cap \cB_j \cap \textup{spt} \, \|V\|$.
\end{lemma}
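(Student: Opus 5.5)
We argue by contraposition, following~\cite{brendle-eichmair}*{Lemma 7}: fix $x \in Q_r \cap \textup{spt}\,\|V\|$ and $\xi \in \bar{B}^{n+m}$ with $U(x) - h(\xi) - \la x, \xi \rg \leq 0$, and show $\xi \in G_r$. By~\eqref{eqn:u-property}, $U(x) \geq h(\xi) + \la x, \xi \rg$ always holds, so the assumption forces equality and $(x,\xi) \in \Gamma_U$. The second assertion then follows directly from Lemma~\ref{lemma:E-Sigma-perturb} with $G = G_r$, which is compact and hence Borel. Since $E_r \subset Q_r \cap \textup{spt}\,\|V\|$, the last assertion is a special case.

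To show $\xi \in G_r$, decompose $\xi = \pi_{T_x M_j}\xi + \eta$ with $\eta := \pi_{T_x^\perp M_j}\xi$, and take $z := x \in Q_r$. Since $Q_r \subset \{d_j(x,x_0) \leq \sqrt{n}\, r\}$ and $(x,\xi) \in \Gamma_U$, Lemma~\ref{lemma:BE-lemma-4} gives $\pi_{T_x M_j}\xi \in \partial_{M_j} U(x)$. The definition of $\hat{\omega}(r)$ then yields $|\pi_{T_x M_j}\xi - \nabla^{M_j}\hat{U}(x)| \leq \hat{\omega}(r)$, which means exactly $|\xi - \Phi_r(x,\eta)| \leq \hat{\omega}(r)$.

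It remains to verify $(x,\eta) \in A_r$. The Hessian condition $D^2_{M_j}\hat{U}(x) - \la \mathrm{I\!I}_{M_j}(x), \eta\rg \geq -\hat{\delta}(r)\, g_{M_j}$ is precisely the definition of $\hat{\delta}(r)$ applied to the contact pair $(x,\xi)$, noting that $\pi_{T_x^\perp M_j}\xi = \eta$. For the norm condition, write $|\nabla^{M_j}\hat{U}(x)| \leq |\pi_{T_x M_j}\xi| + \hat{\omega}(r)$. Then
\[
|\nabla^{M_j}\hat{U}(x)|^2 + |\eta|^2 \leq \bigl(|\pi_{T_xM_j}\xi| + \hat{\omega}(r)\bigr)^2 + |\eta|^2 \leq \bigl(|\xi| + \hat{\omega}(r)\bigr)^2 \leq (1+\hat{\omega}(r))^2.
\]
The middle inequality holds because $(a+w)^2 + b^2 \leq (\sqrt{a^2+b^2} + w)^2$ for $a,b,w \geq 0$, and the last uses $|\xi| \leq 1$. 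Hence $(x,\eta) \in A_r$ and $\xi \in G_r$.

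The main point needing care is that $x$ need not be a good point. So we must use the sheet-wise first statement of Lemma~\ref{lemma:BE-lemma-4}, phrased via $\pi_{T_x M_j}$, rather than $\xi^\top$. We also need the definitions of $\hat{\omega}$ and $\hat{\delta}$ to quantify over all points of $M_j$ in the $\sqrt{n}\,r$-ball and all contact pairs, not just good points, which is how they were stated. Finally, $Q_r \subset M_j \cap B_\rho(x_0)$ for $r < \rho$ small, which follows from $|\Xi^{M_j}_{x_0}(v) - x_0| \leq 2|v|$.
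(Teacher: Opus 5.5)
Your proof is correct and takes the same route as the paper. The paper just cites Brendle--Eichmair's Lemma 7, whose argument is precisely the one you give: at a contact point, split $\xi = \pi_{T_xM_j}\xi + \pi_{T_x^\perp M_j}\xi$, use Lemma~\ref{lemma:BE-lemma-4} together with the definitions of $\hat{\omega}(r)$ and $\hat{\delta}(r)$ to place $(x,\pi_{T_x^\perp M_j}\xi)$ in $A_r$ and hence $\xi$ in $G_r$, and then invoke Lemma~\ref{lemma:E-Sigma-perturb} for the bound $\mu_f(E)\leq\nu(G_r)$.
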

\begin{proof}
The proof is identical to~\cite{brendle-eichmair}*{Lemma 7}, using Lemma~\ref{lemma:BE-lemma-4} and the definitions of $\hat\omega(r)$ and $\hat\delta(r)$ above.
The final assertion follows from Lemma~\ref{lemma:E-Sigma-perturb}.
\end{proof}

\begin{proposition}\label{prop:laplacian-inequality}
Consider a good point $x_0 \in \textup{spt} \, \|V\|$, a $C^2$ sheet $M_j$ representing $V$ near $x_0$, and a smooth function $\hat{U}$ such that $|\hat{U}(x) - U(x)| = o ( d_j(x,x_0)^2)$ as $x \to x_0$.
Then, it holds that
\begin{equation}\label{eqn:laplacian-inequality}
\begin{split}
     &n \alpha_{\rho}^{- \frac{1}{n}} \Theta^n ( \|V\|, x_0)^{\frac{1}{n}} f(x_0)^{\frac{n}{n-1}} \\
     &\leq f(x_0) \Delta_{M_j} \hat{U}(x_0) + \la \nabla^{M_j} f(x_0), \nabla^{M_j} \hat{U}(x_0) \rg + \textstyle { \sqrt{ |\nabla^{M_j} f(x_0)|^2 + |H_{M_j}(x_0)|^2 f(x_0)^2 } }
\end{split}
\end{equation}
where $\alpha_{\rho}$ is the constant defined in~\eqref{eqn:alpha-rho-constant}.
\end{proposition}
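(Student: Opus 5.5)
The plan follows Brendle--Eichmair \cite{brendle-eichmair}*{Proposition 8}: compare the $\mu_f$-mass of the small region $E_r$ with the $\nu$-mass of $G_r$, and then let $r \downarrow 0$.

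First, Lemma~\ref{lemma:BE-lemma-7} gives $\mu_f(E_r) \leq \nu(G_r)$. On the left-hand side, Lemma~\ref{lemma:v-ae-q-gj} together with the continuity of $f$ at the good point $x_0$ gives
\[
\mu_f(E_r) = \int_{E_r} f^{\frac{n}{n-1}} \, d\|V\| = \Theta^n(\|V\|,x_0)\, f(x_0)^{\frac{n}{n-1}}\, r^n + o(r^n).
\]

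On the right-hand side we bound $\nu(G_r)$. Since $\rho$ is continuous and $\bar B^{n+m}$ is compact, $\nu(G_r) \leq \nu(\Phi_r(A_r)) + C\,\mathcal L^{n+m}(\text{tubular shell})$. The tubular part is controlled by $\hat\omega(r)$ times the $(n+m-1)$-dimensional size of $\Phi_r(A_r)$. This should be $o(r^n)$, because $\hat\omega(r) = o(r)$ by Lemma~\ref{lemma:BE-5-and-6} and $\Phi_r(A_r)$ has $n$-dimensional base of diameter $O(r)$ and bounded fibers. This estimate is exactly the one in \cite{brendle-eichmair}*{Lemma 7}. For the main part we use the area formula for $\Phi_r$ on $A_r$, with local coordinates on $Q_r$ and the normal fibers:
\[
\nu(\Phi_r(A_r)) \leq \int_{Q_r} \int_{\{\eta \in T_z^\perp M_j : (z,\eta)\in A_r\}} \rho\bigl(|\nabla^{M_j}\hat U(z)|^2+|\eta|^2\bigr)\,\bigl|\det\bigl(D^2_{M_j}\hat U(z) - \la \mathrm{I\!I}_{M_j}(z),\eta\rg\bigr)\bigr| \, d\eta\, d\cH^n(z).
\]
The Jacobian is the standard normal-bundle computation: the derivative of $z \mapsto \nabla^{M_j}\hat U(z) + \eta$ in tangent directions has tangential block $D^2\hat U - \la\mathrm{I\!I},\eta\rg$, and the normal directions map isometrically.

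On $A_r$ the matrix inside the determinant is at least $-\hat\delta(r)$. Up to an error of order $\hat\delta(r)$, AM--GM gives
\[
\det(\cdots) \leq \Bigl(\tfrac1n\bigl(\Delta_{M_j}\hat U(z) - \la H_{M_j}(z),\eta\rg\bigr)_+ + C\hat\delta(r)\Bigr)^n .
\]
We then let $r\to0$, using the continuity of $\hat U$ and $M_j$ and the shrinking of the fiber constraint to $|\nabla\hat U(x_0)|^2 + |\eta|^2 \leq 1$. The fiber integral of $\rho$ is at most $\alpha_\rho$ by its definition in \eqref{eqn:alpha-rho-constant}. The constraint $|\nabla \hat U|^2+|\eta|^2\le1$ gives $|\eta|\le1$, so
\[
\Delta\hat U - \la H,\eta\rg \le \Delta \hat U(x_0) + |H(x_0)|\sqrt{1-|\nabla\hat U(x_0)|^2}.
\]
Dividing by $r^n$ and passing to the limit yields
\[
\Theta_0\, f(x_0)^{\frac{n}{n-1}} \leq \alpha_\rho\Bigl(\tfrac1n\bigl(\Delta_{M_j}\hat U(x_0) + |H(x_0)|\sqrt{1-|\nabla\hat U(x_0)|^2}\bigr)\Bigr)^n .
\]

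To reach the stated form we insert $f$, following Brendle's trick. One option is to rerun the argument with $\hat U$ perturbed by the gradient of $\log f$. The cleaner option is to observe directly that
\[
f\Delta\hat U + \la\nabla f,\nabla \hat U\rg + \sqrt{|\nabla f|^2+|H|^2f^2} \;\ge\; f\Bigl(\Delta\hat U + |H|\sqrt{1-|\nabla \hat U|^2}\Bigr).
\]
This holds by Cauchy--Schwarz: the vectors $(\nabla\hat U,\sqrt{1-|\nabla\hat U|^2})$ and $(-\nabla f, |H|f)$ satisfy $-\la\nabla f,\nabla\hat U\rg + |H|f\sqrt{1-|\nabla\hat U|^2} \le \sqrt{|\nabla f|^2+|H|^2f^2}$. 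However, this inequality has $|H|f\sqrt{\cdots}$ on the wrong side for our purpose, so the second route does not close the argument directly. Following Brendle--Eichmair exactly, the transport target should instead be weighted by $f$. In their setup the transport is of $f^{n/(n-1)}$, and the key point is that $\nabla \hat U$ at contact points encodes $\nabla f$ through the first-order condition of the Kantorovich potential. I expect this identification of the correct combination, namely that the raised-to-$1/n$ Jacobian bound yields $f\,\Delta\hat U + \la\nabla f,\nabla\hat U\rg$, to be the main obstacle. I would resolve it by reproducing \cite{brendle-eichmair}*{Proposition 8} verbatim on the sheet $M_j$, with $\Theta_0$ entering only through the mass asymptotics of $\mu_f(E_r)$ above.
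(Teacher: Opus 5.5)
You correctly reproduce the paper's Step 1, obtaining the bound
\[
\Theta^n(\|V\|,x_0)\, f(x_0)^{\frac{n}{n-1}} \le \alpha_\rho \Bigl(\tfrac1n\bigl(\Delta_{M_j}\hat U(x_0) + |H_{M_j}(x_0)|\sqrt{1-|\nabla^{M_j}\hat U(x_0)|^2}\bigr)\Bigr)^n .
\]
One technical point there: the paper gets the collar estimate by covering $W_r \times T^\perp_{x_0}M_j$ with cubes of side $r$. Your heuristic in terms of the ``$(n+m-1)$-dimensional size'' of $\Phi_r(A_r)$ should be replaced by that covering, since nothing controls the boundary of $\Phi_r(A_r)$. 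That part is routine.

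The genuine problem is the last step. You write down exactly the right Cauchy--Schwarz inequality and then discard it, claiming it points the wrong way. It does not. Take $n$-th roots above; this is legitimate because, with the positive part from your AM--GM step, the right-hand side must be positive. Multiplying by $f(x_0)>0$ gives
\[
n\,\alpha_\rho^{-\frac1n}\,\Theta^n(\|V\|,x_0)^{\frac1n} f(x_0)^{\frac{n}{n-1}} \le f(x_0)\Bigl(\Delta_{M_j}\hat U(x_0) + |H_{M_j}(x_0)|\sqrt{1-|\nabla^{M_j}\hat U(x_0)|^2}\Bigr).
\]
Your Cauchy--Schwarz inequality says precisely that, at $x_0$,
\[
f\,|H_{M_j}|\sqrt{1-|\nabla^{M_j}\hat U|^2} \le \la \nabla^{M_j} f,\nabla^{M_j}\hat U\rg + \sqrt{|\nabla^{M_j} f|^2+|H_{M_j}|^2f^2}.
\]
Add $f\Delta_{M_j}\hat U(x_0)$ to both sides and chain the two inequalities; this is \eqref{eqn:laplacian-inequality}. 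The term $f|H|\sqrt{\cdot}$ sits on the smaller side, which is exactly where you need it. This is the paper's Step 2, which is phrased by contradiction and applies Cauchy--Schwarz to $(\nabla f, fH)$ and $(\nabla\hat U, y)$ with $|\nabla\hat U|^2+|y|^2\le 1$ on $S(x_0)$. That is equivalent to your choice of the unit vector $(\nabla\hat U,\sqrt{1-|\nabla\hat U|^2})$.

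The mechanism you propose instead would not work. In this transport scheme $f$ enters only through the source measure $\mu_f$. The subgradient $\nabla^{M_j}\hat U(x_0)$ of the Kantorovich potential carries no information about $\nabla f$, so no first-order condition produces $\la\nabla f,\nabla\hat U\rg$, and nothing needs to be reweighted. That term is inserted for free by Cauchy--Schwarz. It is harmless because, in the proof of Proposition~\ref{prop:weighted-MSS}, it cancels against $f\Delta_V U$ when Proposition~\ref{prop:fancy-integration-by-parts} is applied with test function $\varphi f$. As written, your proposal ends at a misdiagnosed obstacle with an incorrect resolution, even though the correct one-line conclusion is already on the page.
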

\begin{proof}
Our argument proceeds in two steps. 
First, we prove a local Jacobian inequality for the density of $\|V\|$ at $x_0$; then, we prove~\eqref{eqn:laplacian-inequality} using the arithmetic-geometric mean inequality.
In what follows, we fix the good point $x_0$ and the $C^2$ sheet $M_j$ representing $V$ near $x_0$.

\smallskip \noindent \textbf{Step 1:} We define the set
\[
S(x_0) := \{ y \in T_{x_0}^{\perp} M_j : |\nabla^{M_j} \hat{U}(x_0)|^2 + |y|^2 \leq 1, \; D^2_{M_j} \hat{U}(x_0) - \la \mathrm{I\!I}_{M_j}(x_0), y \rg \geq 0 \}.
\]
Then, we claim that
\begin{equation}\label{eqn:density-bound-step-1}
\begin{split}
\Theta^n ( \|V\|, x_0) \, f(x_0)^{\frac{n}{n-1}} \leq \int_{S(x_0)} \det ( D^2_{M_j} \hat{U}(x_0) - \la \mathrm{I\!I}_{M_j}(x_0), y \rg) \, \rho ( |\nabla^{M_j} \hat{U}(x_0)|^2 + |y|^2) \, dy .
\end{split}
\end{equation}
Let us recall the construction of a $C^2$ diffeomorphism $\Xi^{M_j}_{x_0}$ from a neighborhood of $0 \in T_{x_0} M_j$ to a neighborhood of $x_0 \in M_j$.
We now choose a local trivialization of the normal bundle: for each $\eta \in T_{x_0} M_j$ with $|\eta| < \rho$, we can find a linear isometry $\tau_v^{M_j} : T_{x_0}^{\perp} V \to T^{\perp}_{\Xi^{M_j}_{x_0}(v)} M_j$, such that the map $(v,\eta) \mapsto \tau_v^{M_j}(\eta)$ is of class $C^1$ and $\tau_0^{M_j} = \textup{id}$.
Here, $v \in W_r \subset T_{x_0} M_j$ for $W_r$ the tangent cube considered in~\eqref{eqn:tangent-cube}.
    For each $r \in (0,\rho)$ as above, we define a map $\Psi$ with the properties
    \begin{align*}
    \Psi &: W_r \times T^{\perp}_{x_0} M_j \to \bR^{n+m}, \qquad \Psi(v,\eta) := \Phi_r ( \Xi^{M_j}_{x_0}(v), \tau_v^{M_j}(\eta)) =  \nabla^{M_j} \hat{U}( \Xi^{M_j}_{x_0}(v) ) + \tau_v^{M_j}(\eta), \\
    & | \det D \Psi(0,\eta) |  = | \det ( D^2_{M_j} \hat{U}(x_0) - \la \mathrm{I\!I}_{M_j}(x_0), \eta \rg ) |,
    \end{align*}
for all $\eta \in T_{x_0}^{\perp} M_j$.
Thus, for all $r \in (0,\rho)$ sufficiently small, all $v \in W_r$, and all $|\eta|\leq 2$, we have
\begin{equation}\label{eqn:jacobian-v-computation}
|\det D \Psi(v,\eta)| = | \det ( D^2_{M_j} \hat{U}( \Xi^{M_j}_{x_0}(v) ) - \la \mathrm{I\!I}_{M_j}( \Xi^{M_j}_{x_0}(v) ), \tau_v^{M_j}(\eta) \rg ) \,| + o(1)
\end{equation}
where the term $o(1)$ tends to zero as $r \downarrow 0$.
Next, we decompose the normal space $T_{x_0}^{\perp} M_j$ into compact cubes of size $r$ and let $\cF_r$ denote the collection of all such cubes.
For $k \in \bN^*$, we define
\[
    S_k := \{ y \in T^{\perp}_{x_0} M_j : |\nabla^{M_j} \hat{U}(x_0)|^2 + |y|^2 \leq 1 + k^{-1}, \; D^2_{M_j} \hat{U}(x_0) - \la \mathrm{I\!I}_{M_j}(x_0), y \rg \geq - k^{-1} g_{M_j} \} 
\]
and let $\cF_{r,k} \subset \cF_r$ denote the set of all cubes in $\cF_r$ having non-empty intersection with $S_{2k}$.
We fix $k$ in what follows.
The sets $S_k$ and $S_{2k}$ satisfy $S(x_0) \subset S_{2k} \Subset S_k$ with positive distance, so taking $r_k < \frac{1}{2(m+n)} \textup{dist}(S^c_k, S_{2k})$ ensures that every cube $Q \in \cF_{r,k}$ meeting $S_{2k}$ must be contained in $S_k$.
Moreover, the properties $\lim_{r \downarrow 0} \frac{\hat{\omega}(r)}{r} = \lim_{r \downarrow 0} \hat{\delta}(r) = 0$ from Lemma~\ref{lemma:BE-5-and-6} show that the relevant normal vectors lie in $S_{2k}$, and are therefore in cubes contained in $S_k$: indeed, for $r>0$ small,
\[
(z,y) \in A_r \quad \text{with} \; (z,y) = ( \Xi^{M_j}_{x_0}(v), \tau_v^{M_j}(\eta)), \qquad \implies \quad \eta \in S_{2k}.
\]
This follows from $|\nabla^{M_j} \hat{U}(z)|^2 + |y|^2 \leq (1 + \hat{\omega}(r))^2$ and $\frac{\hat{\omega}(r)}{r} \to 0$, while $\nabla^{M_j} \hat{U}(z) \to \nabla^{M_j} \hat{U}(x_0)$ and $|\eta| = |y|$.
Therefore, $|\nabla^{M_j} \hat{U}(x_0)|^2 + |\eta|^2 \leq 1 + \frac{1}{2k}$ for $r$ sufficiently small.
We similarly write
\[
D^2_{M_j} \hat{U}(z) - \la \mathrm{I\!I}_{M_j}(z), \tau_v^{M_j}(\eta) \rg \geq - \hat{\delta}(r) g_{M_j}(z) \implies D^2_{M_j} \hat{U}(x_0) - \la \mathrm{I\!I}_{M_j}(x_0), \eta \rg \geq - (2k)^{-1} \, g_{M_j}(x_0)
\]
using $\hat{\delta}(r) \to 0$ and continuity.
This proves the claimed containment of cubes, and hence, $\Phi_r(A_r)$ is contained in $\bigcup_{Q \in  \cF_{r,k} } \Psi( W_r \times Q)$ for small $r \in (0,r_k)$, where $A_r$ is defined in~\eqref{eqn:ar-define}.
Consequently,
\begin{align*}
    G_r &= \{ \xi \in \bar{B}^{n+m} : \exists \; (x,y) \in A_r \; \text{ with } \; |\xi - \Phi_r(x,y)| \leq \hat{\omega}(r) \} \\
    &\subset \bigcup_{ Q \in \cF_{r,k} } \{ \xi \in \bar{B}^{n+m} : \exists \; (v,y) \in W_r \times Q \; \text{ with } \; |\xi - \Psi(v,y)| \leq \hat{\omega}(r) \}
\end{align*}
for every $r \in (0,r_k)$ sufficiently small.
Combining this property with~\eqref{eqn:jacobian-v-computation}, we obtain
\begin{align*}
    & \nu ( \{ \xi \in \bar{B}^{n+m} : \exists \; (v,\eta) \in W_r \times Q \; \text{ with } \; |\xi - \Psi(v,\eta)| \leq \hat{\omega}(r) \} ) \\
    &\leq r^n \int_Q \bigl[ | \det ( D^2_{M_j} \hat{U}(x_0) - \la \mathrm{I\!I}_{M_j}(x_0), y \rg ) | \, \rho ( |\nabla^{M_j} \hat{U}(x_0)|^2 + |y|^2) + k^{-1}  \bigr] \, dy.
\end{align*}
for each cube $Q \in \cF_{r,k}$.
Here, we applied the coarea formula~\cite{maggi}*{Ch. 13} and used $\lim_{r \downarrow 0} \frac{\hat{\omega}(r)}{r} = 0$.
Consequently, summing over all cubes $Q \in \cF_{r,k}$ and using $\cF_{r,k} \subset \cF_r$, we obtain
\begin{align*}
    \nu(G_r) & \leq \sum_{Q \in \cF_{r,k}} \nu \, \bigl( \{ \xi \in \bar{B}^{n+m} : \exists \; (v,\eta) \in W_r \times Q \; \text{ with } \; |\xi - \Psi(v,\eta)| \leq \hat{\omega}(r) \} \bigr) \\
    &\leq r^n \int_{S_k} \bigl[ \, \bigl| \det ( D^2_{M_j} \hat{U}(x_0) - \la \mathrm{I\!I}_{M_j}(x_0), y \rg) \bigr| \, \rho ( |\nabla^{M_j} \hat{U}(x_0)|^2 + |y|^2) + k^{-1} \, \bigr] \, dy 
\end{align*}
for every $r \in (0,r_k)$ sufficiently small.
Moreover, we recall from Lemma that~\ref{lemma:BE-lemma-7} that $\mu_f(Q_r \cap \cB_j) \leq \nu(G_r)$, whereas Lemma~\ref{lemma:v-ae-q-gj} with $\mu_f = f^{\frac{n}{n-1}} \|V\|$ and $\theta(x_0) := \Theta^n ( \|V\|,x_0)$ shows that $\mu_f (Q_r \cap \cB_j) = f(x_0)^{\frac{n}{n-1}} \theta(x_0) r^n + o(r^n)$ at a good point $x_0$.
Consequently,
\begin{align*}
    f(x_0)^{\frac{n}{n-1}}\theta(x_0) r^n &= \mu_f ( Q_r \cap \cB_j) + o (r^n) \leq \nu(G_r) + o(r^n) \\
    &\leq r^n \int_{S_k} \bigl[ | \det ( D^2_{M_j} \hat{U}(x_0) - \la \mathrm{I\!I}_{M_j}(x_0), y \rg ) | \, \rho ( |\nabla^{M_j} \hat{U}(x_0)|^2 + |y|^2) + k^{-1}  \bigr] \, dy + o(r^n),
\end{align*}
Dividing by $r^n$ and sending $r \downarrow 0$, we conclude that
\[
f(x_0)^{\frac{n}{n-1}}\theta(x_0) \leq \int_{S_k} \bigl[ | \det ( D^2_{M_j} \hat{U}(x_0) - \la \mathrm{I\!I}_{M_j}(x_0), y \rg ) | \, \rho ( |\nabla^{M_j} \hat{U}(x_0)|^2 + |y|^2) + k^{-1}  \bigr] \, dy.
\]
This property holds for every $k$, while the sets $S_k$ decrease monotonically to $S(x_0)$ in the Hausdorff sense as $k \to \infty$ and the integrands are continuous in $y$ on a fixed compact ball and decrease monotonically.
Thus, we can send $k \to \infty$ and apply the dominated convergence theorem to obtain
\[
\theta(x_0) f(x_0)^{\frac{n}{n-1}} \leq \int_{S(x_0)} | \det ( D^2_{M_j} \hat{U}(x_0) - \la \mathrm{I\!I}_{M_j}(x_0), y \rg ) | \, \rho ( |\nabla^{M_j} \hat{U}(x_0)|^2 + |y|^2) \, dy.
\]
Because the matrix $D^2_{M_j} \hat{U}(x_0) - \la \mathrm{I\!I}_{M_j}(x_0), y \rg \geq 0$ is positive semi-definite on $S(x_0)$, we can remove the absolute value in this last step.
This proves the bound~\eqref{eqn:density-bound-step-1}.

\smallskip \noindent \textbf{Step 2:}
Next, we suppose for contradiction that the inequality~\eqref{eqn:laplacian-inequality} fails at some good point $x_0$, so there exists a real number $\hat{\alpha} > \alpha_{\rho}$ with the property that
\begin{align*}
     & f(x_0) \Delta_{M_j} \hat{U}(x_0) + \la \nabla^{M_j} f(x_0), \nabla^{M_j} \hat{U}(x_0) \rg + \textstyle { \sqrt{ |\nabla^{M_j} f(x_0)|^2 + |H_{M_j}(x_0)|^2 f(x_0)^2 } } \\
     &\leq n \hat{\alpha}^{- \frac{1}{n}} \Theta^n ( \|V\|, x_0)^{\frac{1}{n}} f(x_0)^{\frac{n}{n-1}}.
\end{align*}
Using the Cauchy-Schwarz inequality, we can bound
\begin{align*}
    &\la \nabla^{M_j} f(x_0), \nabla^{M_j} \hat{U}(x_0) \rg + f(x_0) \la H_{M_j}(x_0), y \rg \\
    &\geq  - \sqrt{ |\nabla^{M_j} f(x_0)|^2 + |H_{M_j}(x_0)|^2 f(x_0)^2 } \, \sqrt{ |\nabla^{M_j} \hat{U}(x_0)|^2 + |y|^2 }.
\end{align*}
Using $|\nabla^{M_j} \hat{U}(x_0)|^2 + |y|^2 \leq 1$ for every $y \in S(x_0)$, we therefore bound the above expression by
\begin{align*}
    &f(x_0) ( \Delta_{M_j} \hat{U}(x_0) - \la H_{M_j}(x_0), y \rg ) \\
    &\leq f(x_0) \Delta_{M_j} \hat{U}(x_0) + \la \nabla^{M_j} f(x_0), \nabla^{M_j} \hat{U}(x_0) \rg + \textstyle { \sqrt{ |\nabla^{M_j} f(x_0)|^2 + |H_{M_j}(x_0)|^2 f(x_0)^2 } } \\
     &\leq n \hat{\alpha}^{- \frac{1}{n}} \Theta^n ( \|V\|, x_0)^{\frac{1}{n}} f(x_0)^{\frac{n}{n-1}}.
\end{align*}
Moreover, for every $y \in S(x_0)$, the bilinear form $D^2_{M_j} \hat{U}(x_0) - \la \mathrm{I\!I}_{M_j}(x_0) , y \rg \geq 0$ is positive semidefinite.
Hence, the arithmetic-geometric mean inequality produces
\[
0 \leq \det ( D^2_{M_j} \hat{U}(x_0) - \la \mathrm{I\!I}_{M_j}(x_0), y \rg ) \leq \left( \frac{\Delta_{M_j} \hat{U}(x_0) - \la H_{M_j}(x_0), y \rg }{n} \right)^n \leq \hat{\alpha}^{-1} \Theta^n( \|V\|, x_0) f(x_0)^{\frac{n}{n-1}}.
\]
for every $y \in S(x_0)$, due to the above bound $\Delta_{M_j} \hat{U}(x_0) - \la H_{M_j}(x_0) , y \rg \leq n \hat{\alpha}^{- \frac{1}{n}} \Theta^n (\|V\|, x_0) f(x_0)^{\frac{1}{n-1}}$. 
Thus, applying the inequality~\eqref{eqn:density-bound-step-1}, we obtain
\begin{align*}
    \Theta^n ( \|V\|, x_0) f(x_0)^{\frac{n}{n-1}} &\leq \int_{S(x_0)} \det ( D^2_{M_j} \hat{U}(x_0) - \la \mathrm{I\!I}_{M_j}(x_0), y \rg ) \, \rho ( |\nabla^{M_j} \hat{U}(x_0)|^2 + |y|^2) \, dy \\
    &\leq \hat{\alpha}^{-1} \, \Theta^n( \|V\|, x_0) f(x_0)^{\frac{n}{n-1}} \, \int_{S(x_0)} \rho ( |\nabla^{M_j} \hat{U}(x_0)|^2 + |y|^2) \, dy \\
    &\leq \hat{\alpha}^{-1} \alpha_{\rho} \, \Theta^n( \|V\|, x_0) f(x_0)^{\frac{n}{n-1}}.
\end{align*}
Since $\hat{\alpha} > \alpha_{\rho}$, this produces a contradiction.
Therefore,~\eqref{eqn:laplacian-inequality} holds, and our assertion follows.
\end{proof}

\section{Proof of the main theorem}

We now combine the above observations to prove Theorems~\ref{thm:varifold-isoperimetric} and~\ref{thm:optimal-MSS}.
\begin{proof}[Proof of Proposition~\ref{prop:weighted-MSS}]
We first consider the case $f>0$.
Let $\Omega$ be a bounded open set containing the support of $\|V\|$, and on which $f$ is defined.
Using Proposition~\ref{prop:laplacian-inequality}, we obtain
\begin{align*}
    &n \alpha_{\rho}^{- \frac{1}{n}} \Theta^n ( \|V\|, x)^{\frac{1}{n}} f^{\frac{n}{n-1}} \leq f \Delta_V U + \la \nabla^V f, \nabla^V U \rg + \textstyle { \sqrt{ |\nabla^V f|^2 + |H_V|^2 f^2 } }
\end{align*}
for $\|V\|$-a.e.~ point, where $\Delta_VU(x) := \textup{tr} \, D^2_V U(x)$ is the trace of the Alexandrov Hessian of $U$ restricted to a $C^2$ sheet representing a good point, as in Definition~\ref{def:alexandrov-point}.
We multiply this inequality by some $\varphi \in \textup{Lip}_c(\Omega)$, with $\varphi \geq 0$, and apply Proposition~\ref{prop:fancy-integration-by-parts} with $\psi = \varphi f$.
This gives
\[
\int_{\Omega} \varphi f \Delta_V U \, d\|V\| + \int_{\Omega} \varphi \la \nabla^V f, \nabla^V U \rg \, d\|V\| + \int_{\Omega} f \la \nabla^V \varphi, \nabla^V U \rg \, d\|V\| \leq \int_{\Omega} \varphi f \, d |\delta_s V|.
\]
Moreover, since $U$ is $1$-Lipschitz, the third term of the left-hand side is bounded from below by $- \int_{\Omega} f |\nabla^V \varphi| \, d\|V\|$.
Combining these steps and writing $\theta(x) := \Theta^n (\|V\|,x)$, we obtain
\begin{align*}
        &n \alpha_{\rho}^{- \frac{1}{n}} \int \theta(x)^{\frac{1}{n}} f^{\frac{n}{n-1}} \varphi \, d\|V\| \leq \int \varphi f \, d |\delta_s V| + \int {\textstyle{\sqrt{ |\nabla^V f|^2 + |H_V|^2 f^2} }} \, \varphi \, d \|V\| + \int f |\nabla^V \varphi| \, d \|V\|
\end{align*}
for every non-negative Lipschitz function $\varphi \in \textup{Lip}_c(\Omega)$.
This proves our claim in the case when $f>0$ and $\int_{\Omega} f^{\frac{n}{n-1}} \, d\|V\| = 1$.
The general case $f>0$ follows by scaling.

Next, if $f \geq 0$ is non-negative, it suffices to consider the case when $f>0$ on a set of positive measure, since otherwise the inequality is trivial.
Then, $M := \int_{\Omega} f^{\frac{n}{n-1}} \, d\|V\| >0$.
We define the sequence of functions $f_{\ve} := f+\ve$ for $\ve >0$, so the above result applies to each $f_{\ve}$.
Then, $\nabla^V f_{\ve} = \nabla^V f$ everywhere and $\int_{\Omega} f_{\ve}^{\frac{n}{n-1}} \, d\|V\| \to M$ by the dominated convergence theorem.
For the same reason, $\int_{\Omega} \theta(x)^{\frac{1}{n}} f_{\ve}^{\frac{n}{n-1}} \varphi \, d\|V\| \to \int_{\Omega} \theta(x)^{\frac{1}{n}} f^{\frac{n}{n-1}} \varphi \, d\|V\|$ and
\[
\int_{\Omega} \varphi f_{\ve} \, d|\delta_s V| \to \int_{\Omega} \varphi f \, d |\delta_ sV|, \qquad \int_{\Omega} f_{\ve} \, |\nabla^V \varphi| \, d\|V\| \to \int_{\Omega} f \, |\nabla^V \varphi|\, d\|V\|
\]
because $f \in \textup{Lip}(\Omega)$ is bounded on $\on{spt} \|V\|$, which is compact.
Finally, the middle term satisfies $\sqrt{|\nabla^V f|^2 + |H_V|^2 f_{\ve}^2} \leq |\nabla^V f| + |H_V| (f+\ve)$ with $L^1_{\textup{loc}}(\|V\|)$ right-hand side, hence integrable on $\textup{spt} \, \varphi$.
Thus, we can apply the dominated convergence theorem and take limits in both sides of the inequality for $f_{\ve}$ as $\ve \downarrow 0$ to obtain the claim for any non-negative Lipschitz $f$.

Finally, for an arbitrary Lipschitz function $f$, we apply the preceding discussion to $|f|$, which is Lipschitz and satisfies $|\nabla^V |f|| = |\nabla^V f|$ for $\|V\|$-a.e.~ $x$.
This proves our assertion.
\end{proof}

\begin{proof}[Proof of Theorem~\ref{thm:optimal-MSS}]
        We will choose the density $\rho$ so that nearly all the mass of the measure $\rho ( |\xi|^2) \, d \xi$ on $\bar{B}^{n+m}$ is concentrated near the boundary.
Concretely, for $m,n \geq 2$, we can find a sequence of continuous functions $\rho_k: [0,\infty) \to (0,\infty)$ such that for $k \to \infty$, we have
\[
        \int_{\bar{B}^{n+m}} \rho_k ( |\xi|^2) \, d \xi = 1, \qquad \sup_{ [ 0, 1 - k^{-1}] } \rho_k \leq o(1), \qquad \sup_{ [ 1 - k^{-1} , 1] } \rho_k \leq \frac{2k}{(n+m ) |B^{n+m}|} + o(k).
\]
For each point $z \in \bR^n$, we obtain
\begin{align*}
    & \frac{1}{|B^m|} \int_{ \{ y \in \bR^m : |z|^2 + |y|^2 \leq 1 \} } \rho_k ( |z|^2 + |y|^2) \, dy \\
    &\leq (1 - |z|^2 - k^{-1})^{\frac{m}{2}}_+ \sup_{ [0,1 - k^{-1}] } \rho_k + \bigl[ (1 - |z|^2)^{\frac{m}{2}}_+ - (1 - |z|^2 - k^{-1})_+^{\frac{m}{2}} \bigr] \sup_{ [1 - k^{-1}, 1]} \rho_k \\
    &\leq \sup_{[0,1 - k^{-1}]} \rho_k + \tfrac{m}{2} k^{-1} \sup_{[1 - k^{-1},1]} \rho_k.
\end{align*}
Sending $k \to \infty$, we see that the quantity $\alpha_k := \alpha_{\rho_k}$ from~\eqref{eqn:alpha-rho-constant} satisfies $\alpha_k \leq \frac{m |B^m|}{(m+n) |B^{m+n}|} + o(1)$; here, we used $m \geq 2$.
Applying Proposition~\ref{prop:weighted-MSS}, we conclude that
\begin{align*}
    &n \Bigl( \frac{(n+m) |B^{n+m}|}{m |B^m|} \Bigr)^{\frac{1}{n}} \Bigl( \int_{\Omega} f^{\frac{n}{n-1}} \, d\|V\| \Bigr)^{- \frac{1}{n}} \int_{\Omega} \Theta^n ( \|V\|, x)^{\frac{1}{n}} f(x)^{\frac{n}{n-1}} \varphi(x) \, d \|V\|(x) \\
    &\leq \int_{\Omega} \varphi \sqrt{ |\nabla^V f|^2 + |H_V|^2 f^2 } \, d \|V\| + \int_{\Omega} f \varphi \, d |\delta_s V| + \int_{\Omega} f |\nabla^V \varphi| \, d \|V\|.
\end{align*}
Since $\textup{spt} \, \|V\|$ is compact, we can take $\varphi \in C_c^{\infty}(\Omega)$ to satisfy $\varphi \equiv 1$ on a neighborhood of this set, so the last term vanishes and the desired inequality follows.
This completes the proof.
\end{proof}

\begin{proof}[Proof of Theorem~\ref{thm:varifold-isoperimetric}]
    This result follows from Theorem~\ref{thm:optimal-MSS} upon taking $f=1$ and using the fact that integral varifolds have $\Theta^n( \|V\|,x) \in \bN^*$ for $\|V\|$-a.e.~ $x$, so $\Theta^n (\|V\|,x) \geq 1$ for $\|V\|$-a.e.~ $x$.
\end{proof}

\section{Rigidity of the isoperimetric inequality}\label{section:rigidity}

We now prove that equality in the sharp codimension-$2$ isoperimetric inequality for varifolds is attained precisely by flat round $n$-disks.
Let $V$ be a compactly supported $n$-varifold on $\bR^{n+2}$ attaining equality in the bound of Theorem~\ref{thm:varifold-isoperimetric}, scaled to satisfy $\|V\|(\bR^{n+2}) = 1$.

\begin{lemma}\label{lemma:indecomposable}
The varifold $V$ has density $\Theta^n (\|V\|,x) = 1$ for $\|V\|$-a.e.~ $x$.
Moreover, $V$ is indecomposable in the sense of~\cite{weakly-differentiable-menne}*{Definition 6.2}.
\end{lemma}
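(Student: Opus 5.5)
Both assertions will be read off from the sharp inequality of Theorem~\ref{thm:optimal-MSS}, which retains the density weight $\Theta^n(\|V\|,x)^{1/n}$, combined with equality in Theorem~\ref{thm:varifold-isoperimetric}.

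\textbf{Density one.} Apply Theorem~\ref{thm:optimal-MSS} with $m=2$ and $f\equiv 1$ on a neighborhood of $\on{spt}\|V\|$. Then $\nabla^V f=0$, and the normalization $\|V\|(\bR^{n+2})=1$ gives
\[
n|B^n|^{\frac1n}\int \Theta^n(\|V\|,x)^{\frac1n}\,d\|V\| \leq \int |H_V|\,d\|V\| + |\delta_s V|(\bR^{n+2}) = \|\delta V\|(\bR^{n+2}).
\]
By the equality hypothesis the right-hand side is $n|B^n|^{1/n}=n|B^n|^{1/n}\int 1\,d\|V\|$. Hence $\int(\Theta^{1/n}-1)\,d\|V\|\le 0$. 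Since $V$ is integral, $\Theta^n(\|V\|,x)\in\bN^*$, so $\Theta^{1/n}-1\ge0$ $\|V\|$-a.e. It follows that $\Theta^n(\|V\|,x)=1$ for $\|V\|$-a.e.\ $x$.

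\textbf{Indecomposability.} Argue by contradiction. By Menne's definition, if $V$ is decomposable there is a $\|V\|+\|\delta V\|$-measurable set $E$ with $0<\|V\|(E)<1$ such that the restricted varifolds $V_1=V\mres(E\times G(n,n+2))$ and $V_2=V\mres(E^c\times G(n,n+2))$ satisfy $\delta V=\delta V_1+\delta V_2$ with $\|\delta V\|=\|\delta V_1\|+\|\delta V_2\|$. This additivity of total variation is exactly what the definition supplies (its boundary term vanishes), and I will take it from \cite{weakly-differentiable-menne}.

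Each $V_i$ is then a compactly supported integral varifold with bounded first variation, so Theorem~\ref{thm:varifold-isoperimetric} applies to each. Writing $a=\|V\|(E)\in(0,1)$, summing the two inequalities gives
\[
n|B^n|^{\frac1n}\bigl(a^{\frac{n-1}{n}}+(1-a)^{\frac{n-1}{n}}\bigr)\le \|\delta V_1\|(\bR^{n+2})+\|\delta V_2\|(\bR^{n+2}) = \|\delta V\|(\bR^{n+2}) = n|B^n|^{\frac1n}.
\]
Because $\frac{n-1}{n}<1$, the function $t\mapsto t^{(n-1)/n}$ is strictly concave, so $a^{(n-1)/n}+(1-a)^{(n-1)/n}>1$ for $a\in(0,1)$. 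This contradicts the displayed inequality, so $V$ is indecomposable.

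\textbf{Expected obstacle.} The computations are routine. The only delicate point is confirming that Menne's notion of decomposition gives additivity of total first variation, $\|\delta V\|=\|\delta V_1\|+\|\delta V_2\|$, and not just additivity of $\delta V$. This should follow because the distributional boundary term of $E$ vanishes in the definition, so that $\delta V_i=\delta V\mres E_i$ with $E_1=E$ and $E_2=E^c$ disjoint.
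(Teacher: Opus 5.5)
Your proof is correct and follows the paper's argument. Density one comes from Theorem~\ref{thm:optimal-MSS} with $f\equiv 1$ together with $\Theta^n(\|V\|,x)\in\bN^*$, and indecomposability comes from applying the sharp inequality to the pieces and using strict concavity of $t\mapsto t^{\frac{n-1}{n}}$. The only difference is cosmetic. The paper invokes Menne's decomposition theorem (Theorem 6.12) to obtain $\|\delta V\|=\sum_\beta\|\delta V_\beta\|$, whereas you work directly from Definition 6.2 with a single set $E$ and read off $\|\delta V\|=\|\delta V\|\mres E+\|\delta V\|\mres E^c$ from $V\partial E=0$, which is slightly more self-contained.
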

\begin{proof}
Using Theorem~\ref{thm:optimal-MSS} with $f=1$ and $\Theta^n ( \|V\|,x) \geq 1$ for $\|V\|$-a.e.~ $x$ by integrality, we find
    \[
    \|V\| ( \bR^{n+2}) = \int \Theta^n ( \|V\|,x)^{\frac{1}{n}} \, d\|V\|(x) \leq n^{-1} |B^n|^{- \frac{1}{n}} \|V\|( \bR^{n+2})^{\frac{1}{n}} \| \delta V\| ( \bR^{n+2}) = \|V\| ( \bR^{n+2}).
    \]
    Therefore, equality forces $\Theta^n ( \|V\|,x) = 1$ for $\|V\|$-a.e.~ $x$.
    This proves our first assertion.

    Because $V$ is integral, compactly supported, and has locally bounded first variation, it admits a decomposition $V = \sum_{\beta} V_{\beta}$ by~\cite{weakly-differentiable-menne}*{Theorem 6.12}, which satisfies $\|V\| = \sum_{\beta} \|V_{\beta}\|$ and $\| \delta V\| = \sum_{\beta} \| \delta V_{\beta}\|$.
    The sharp inequality of Theorem~\ref{thm:varifold-isoperimetric} for each component $V_{\beta}$ implies
    \[
    n |B^n|^{\frac{1}{n}} \,\|V\|(\bR^{n+2})^{\frac{n-1}{n}} = \| \delta V\| ( \bR^{n+2}) = \textstyle{ \sum_{\beta} \| \delta V_{\beta} \| ( \bR^{n+2}) } \geq n |B^n|^{\frac{1}{n}} \, { \textstyle \sum_{\beta} \| V_{\beta}\| (\bR^{n+2})^{\frac{n-1}{n}} }
    \]
    If there were at least two non-trivial components, then the strict concavity of the function $t \mapsto t^{\frac{n-1}{n}}$, with $\sum_{\beta} \|V_{\beta}\| = \|V\|$, would produce a contradiction.
    Thus, $V$ is indecomposable as claimed.
\end{proof}

Following the arguments of Section~\ref{section:optimal-transportation-varifolds}, with $f=1$, we consider a sequence of optimal pairs $\{ (U_k, h_k) \}$ satisfying~\eqref{eqn:u-h-property} and~\eqref{eqn:u-property} for a sequence of densities $\rho_k$ with $\sup_{ [0, 1 - k^{-1}] } \rho_k \leq o(1)$.
The optimal data remain admissible under the transformation $(\tilde{U}_k, \tilde{h}_k) := (U_k - c_k, h_k - c_k)$, with $c_k = U_k(x_0)$ for some fixed $x_0 \in \bR^{n+2}$.
The functions $\tilde{U}_k$ are $1$-Lipschitz and satisfy $\tilde{U}_k(x_0)=0$, so the Arzel\`a-Ascoli theorem and a diagonal argument produce a $1$-Lipschitz, convex subsequential limit function $U$ on $\bR^{n+2}$, with $\tilde{U}_{k'} \to U$ locally uniformly.
On each $C^2$ sheet $M_j$, the restrictions $\tilde{U}_{k'}|_{M_j}$ are locally uniformly semiconvex; in a $C^2$ coordinate chart, after adding a fixed quadratic function, we can apply the convergence theorem for gradients of convex functions~\cite{rockafellar}*{Theorem 24.5}.
Thus, $\nabla^V U_{k'} \to \nabla^V U$ for $\|V\|$-a.e. point, and the uniform Lipschitz bound together with dominated convergence gives $\nabla^V U_{k'} \to \nabla^V U$ strongly in $L^p_{\textup{loc}}(\|V\|)$ for every $1 \leq p < \infty$.
We relabel the normalized subsequence as $U_k$.
Moreover, the measures $\nu_k$ weak-$\ast$ converge to the normalized surface measure $\sigma_{\bS^{n+1}}$ on $\partial \bar{B}^{n+2}$.
%By the compactness of the space of probability measures, after passing to a further subsequence, we can extract a limit measure $\gamma_k \xrightharpoonup{*} \gamma \in \cP ( \textup{spt} \, \|V\| \times \bar{B}^{n+2})$ with $\gamma \in \Pi (\|V\|, \sigma_{\bS^{n+1}})$ due to $(\pi_{\textup{spt} \, \|V\|})_{\#} \gamma = \|V\|$ and $( \pi_{\bar{B}^{n+2}})_{\#} \gamma = \sigma_{\bS^{n+1}}$.
%Moreover, $\gamma$ is the optimal plan with data $(U,h)$, for $\tilde{h}_{k''} \to h$ a further subsequential limit as above, and $\gamma$ is concentrated on the contact set $\Gamma_U = \{ (x,\xi) \in \textup{spt} \, \|V\| \times \bar{B}^{n+2} : U(x) - h(\xi) - \la x, \xi \rg = 0 \}$.
The set of good points for each $U_k$ has full $\|V\|$-measure, hence so does the set of points that are jointly good for all the functions $U_k$.

\begin{lemma}\label{lemma:positive-measure-subset}
For any $E \subset \textup{spt} \, \|V\|$ with $\|V\|(E)>0$, there exists a compact set $K \subset E$ contained in a single $C^2$ Borel set $\cB_j \subset M_j$ of $V$ such that $\|V\|(K)>0$, the map $x \mapsto \nabla^V U(x)$ is Lipschitz on $K$, and after passing to a further subsequence, $\nabla^V U_{k'} \to \nabla^V U$ uniformly in $K$.
\end{lemma}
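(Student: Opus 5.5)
We will obtain $K$ by successively shrinking $E$, using three standard measure-theoretic tools: countable additivity over the sheets, a Lusin-type theorem for the gradient of a semiconvex function on a $C^2$ sheet, and Egorov's theorem.

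\textbf{Step 1 (one sheet).} Since $\|V\|(\bR^{n+2}\setminus\bigcup_j\cB_j)=0$ by Proposition~\ref{prop:menne} and $\|V\|(E)>0$, countable additivity gives an index $j$ with $\|V\|(E\cap\cB_j)>0$. By inner regularity of the Radon measure $\|V\|$, we may first replace $E\cap\cB_j$ by a compact subset $K_0\subset E\cap\cB_j$ of positive measure. We may also assume that $K_0$ consists of good points for $U$ and for all the $U_k$, and that $K_0$ lies in a single coordinate chart $\Phi:B^n_R\to M_j$ in which $U\circ\Phi+\frac{\kappa}{2}|y|^2$ is convex. This is possible because $M_j$ is $C^2$ and $U$ is convex on $\bR^{n+2}$, so a countable cover by charts suffices.

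\textbf{Step 2 (Lipschitz gradient).} In the chart, set $v:=U\circ\Phi+\frac{\kappa}{2}|y|^2$, which is convex. By Alexandrov's theorem (cf.~\cite{villani}*{Theorem 14.25}), $Dv$ is $BV_{\textup{loc}}$ and is approximately differentiable a.e. The standard Lusin-type property of $BV$ (equivalently, of approximately differentiable) maps, namely Federer--Whitney's theorem, gives compact sets of almost full measure on which $Dv$ agrees with a $C^1$ map and is therefore Lipschitz.

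Since $\|V\|=\theta\,\cH^n\mres\cB_j$ with $\theta\geq1$ finite, $\|V\|\mres\cB_j$ and $\cH^n\mres M_j$ have the same null sets on $\cB_j$. So we can choose a compact $K_1\subset K_0$ of positive $\|V\|$-measure on which $y\mapsto Dv(y)$ is Lipschitz in chart coordinates. Then $\nabla^VU=\nabla^{M_j}U$ on $K_1$ is obtained from $Dv$ by subtracting $\kappa y$, composing with $D\Phi^{-1}$ (which is $C^1$), and composing with the tangential projection $\pi_{T_xM_j}$ (which is $C^1$ in $x$). Hence $\nabla^VU$ is Lipschitz on $K_1$. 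Here we use that at good points the ambient tangential gradient coincides with the sheet gradient, by Definition~\ref{def:alexandrov-point}.

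\textbf{Step 3 (uniform convergence).} The paragraph before the lemma gives $\nabla^VU_k\to\nabla^VU$ $\|V\|$-a.e., using the convergence of gradients of uniformly semiconvex functions, \cite{rockafellar}*{Theorem 24.5}. Since $\|V\|(K_1)<\infty$, Egorov's theorem yields a compact $K\subset K_1$ with $\|V\|(K)>0$ on which the convergence is uniform, again shrinking by inner regularity. No extra subsequence is strictly needed here, but passing to one is harmless. The Lipschitz property persists on subsets, so $K$ satisfies all the requirements.

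\textbf{Main obstacle.} The main obstacle is Step 2: justifying a Lipschitz, not merely continuous, restriction of the gradient of a semiconvex function on a positive-measure set. We would first try Alexandrov's theorem combined with Federer's $C^1$ Lusin theorem for approximately differentiable maps, \cite{federer}*{3.1.8}. An alternative is to bound the maximal function of $|D^2v|$, a Radon measure, and use the pointwise estimate $|Dv(y)-Dv(y')|\leq C(M(y)+M(y'))|y-y'|$ on a sublevel set $\{M\leq\Lambda\}$ of positive measure.
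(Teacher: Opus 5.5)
Your proposal is correct and follows essentially the same route as the paper. You restrict to a single sheet $M_j$ and coordinate patch, use the a.e.\ approximate differentiability of the gradient of the semiconvex restriction $U|_{M_j}$ together with a Lusin-type $C^1$ approximation theorem to get a compact positive-measure set on which $\nabla^V U$ is Lipschitz, and then apply Egorov's theorem and inner regularity to make $\nabla^V U_k \to \nabla^V U$ uniform. Your additional remarks, converting the Lipschitz bound from chart coordinates to the ambient metric and noting that no new subsequence is needed, are harmless refinements of the paper's sketch.
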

\begin{proof}
Let $\cG$ denote the full $\|V\|$-measure set of point which are good for $U$ and for every $U_k$, and at which the convergence $\nabla^V U_k \to \nabla^V U$ holds after passing to the subsequence fixed above.
Since the countably many $C^2$ sheets cover $V$ up to a $\|V\|$-null set, there is a sheet $M_j$, a relatively compact coordinate patch $\Omega_j \Subset M_j$, and a Borel set $\cB_j \subset M_j$ such that $\|V\|(E \cap \cB_j \cap \Omega_j \cap \cG) > 0$.

On $M_j$, the restriction $U|_{M_j}$ is semiconvex, so $\nabla^{M_j} (U|_{M_j})$ is approximately differentiable at $\cH^n$-a.e.~ Alexandrov point of $M_j$ in the sense of Federer~\cite{federer-gmt}*{\S 3.1.2}.
Equivalently, by the Lusin $C^1$ approximation theorem for approximately differentiable maps, the set of approximate differentiability points can be covered, up to a $\cH^n$-null et, by countably many sets on which $\nabla^{M_j}(U|_{M_j})$ agrees with a $C^1$ map; see~\cite{weakly-differentiable-menne}*{Lemma 11.1 and Theorem 11.2} and the $C^1$ approximation property~\cite{simon-gmt}*{Theorem 1.5}.
One such set meets $E\cap\cB_j\cap\Omega_j\cap\cG$ in positive $\|V\|$-measure; by inner regularity we choose a compact subset $K_0$ of positive measure on which $\nabla^VU$ is Lipschitz.
Using Egorov's theorem, and once again inner regularity, we obtain a compact set $K\subset K_0$ with $\|V\|(K)>0$ on which $\nabla^VU_k\to\nabla^VU$ uniformly.
This proves the assertion.
\end{proof}

\begin{lemma}\label{lemma:gradient-bound}
The limit function $U$ satisfies $|\nabla^V U(x)| < 1$ for $\|V\|$-a.e.~ $x$.
\end{lemma}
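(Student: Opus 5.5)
The plan is to argue by contradiction using the limiting optimal transport plan rather than the pointwise Jacobian inequality. The Jacobian route is delicate: the mass of $\rho_k$ on the normal disk $\{|y|^2 \leq 1-|\nabla U_k|^2\}$ is comparable to $\alpha_k \min\{k(1-|\nabla U_k|^2),1\}$, and uniform convergence $\nabla^V U_k \to \nabla^V U$ gives no control on the rate relative to $k^{-1}$.

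\textbf{Step 1: the limiting potential and plan.} Suppose $E := \{x : |\nabla^V U(x)| = 1\}$ has $\|V\|(E) > 0$. Alongside $\tilde U_k$, normalize $\tilde h_k = h_k - c_k$. We may take $\tilde h_k$ to be the $c$-transform $\tilde h_k(\xi) = \inf_{x \in \on{spt}\|V\|}(\tilde U_k(x) - \la x,\xi\rg)$. These functions are uniformly Lipschitz on $\bar B^{n+2}$, with constant $\sup_{\on{spt}\|V\|}|x|$, and uniformly bounded because $\tilde U_k(x_0) = 0$ and $\on{spt}\|V\|$ is compact. By Arzel\`a--Ascoli we get $\tilde h_{k'} \to h$ uniformly, and $U(x) - h(\xi) - \la x,\xi\rg \geq 0$ on $\on{spt}\|V\| \times \bar B^{n+2}$. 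The optimal plans $\gamma_k \in \Pi(\|V\|,\nu_k)$ (here $f=1$ and $\|V\|(\bR^{n+2})=1$) are probability measures on a fixed compact set, so a further subsequence converges weak-$*$ to some $\gamma$. Its marginals are $\|V\|$ and $\sigma_{\bS^{n+1}}$, since $\nu_k \xrightharpoonup{*} \sigma_{\bS^{n+1}}$. Each $\gamma_k$ is concentrated on the closed set $\{\tilde U_k - \tilde h_k - \la x,\xi\rg = 0\}$, and these functions converge uniformly. Hence $\gamma$ is concentrated on the limiting contact set $\Gamma_U = \{(x,\xi) : U(x) - h(\xi) - \la x,\xi\rg = 0\}$, and moreover $\on{spt}\gamma \subset \on{spt}\|V\| \times \bS^{n+1}$.

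\textbf{Step 2: contact points over $E$ are forced.} Apply Lemma~\ref{lemma:positive-measure-subset} to $E$ to get a compact $K \subset E \cap \cB_j$ with $\|V\|(K) > 0$ on which $\nabla^V U$ is Lipschitz; we may also assume $K$ consists of good points for $U$. Let $x \in K$ and $(x,\xi) \in \Gamma_U$. The argument of Lemma~\ref{lemma:BE-lemma-4} (i.e.\ \cite{brendle-eichmair}*{Lemma 4}) applies verbatim to the limit pair $(U,h)$ and gives $\pi_{T_x M_j}\xi \in \partial_{M_j} U(x)$. Since $U|_{M_j}$ is differentiable at $x$, the quadratic subdifferential is the singleton $\{\nabla^V U(x)\}$, by the same two-sided curve argument used in Proposition~\ref{prop:measure-zeta-q}. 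Thus $\xi^\top = \nabla^V U(x)$, and $|\xi| \leq 1 = |\nabla^V U(x)|$ forces $\xi^\perp = 0$. Hence
\[
\Gamma_U \cap (K \times \bar B^{n+2}) \subset \{(x,\nabla^V U(x)) : x \in K\}.
\]

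\textbf{Step 3: a measure contradiction.} Set $G := \nabla^V U(K) \subset \bS^{n+1}$. It is the image of the $n$-dimensional rectifiable set $K \subset M_j$ under a Lipschitz map, so $\cH^{n+1}(G) = 0$ and therefore $\sigma_{\bS^{n+1}}(G) = 0$. On the other hand, as in Lemma~\ref{lemma:E-Sigma-perturb},
\[
0 < \|V\|(K) = \gamma(K \times \bar B^{n+2}) = \gamma\bigl(\Gamma_U \cap (K \times \bar B^{n+2})\bigr) \leq \gamma(\on{spt}\|V\| \times G) = \sigma_{\bS^{n+1}}(G) = 0,
\]
which is a contradiction. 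Hence $|\nabla^V U| < 1$ holds $\|V\|$-a.e.

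\textbf{Main obstacle.} The delicate step is Step 1: the passage to the limit must keep $\gamma$ concentrated on the contact set of the limit pair $(U,h)$. This requires equicontinuity of the $\tilde h_k$, which I obtain from the $c$-transform representation, together with uniform convergence so that the closed contact sets behave well under weak-$*$ limits. A secondary point is in Step 2: applying Lemma~\ref{lemma:BE-lemma-4} to the limit pair needs the tangential differentiability of $U$ at points of $K$, which is arranged by choosing $K$ inside the good set.
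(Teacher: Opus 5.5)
Your argument is correct in outline but takes a different route from the paper. Like the paper, you avoid the pointwise Jacobian estimate. The difference is where the limit $k \to \infty$ is taken.

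\textbf{The paper's route.} It never forms a limiting plan and stays at finite $k$. It applies Lemma~\ref{lemma:BE-lemma-4} to the pair $(U_k,h_k)$ and uses the uniform convergence $\nabla^V U_k \to \nabla^V U$ on $K$; this is why Lemma~\ref{lemma:positive-measure-subset} contains an Egorov step. These show that every contact $(x,\xi) \in \Gamma_{U_k}$ with $x \in K$ has $\xi$ in an $\ve_k$-neighborhood $G_k$ of $\nabla^V U(K)$, with $\ve_k \to 0$ controlled by $\sup_K |\nabla^V U_k - \nabla^V U|^{1/2}$. Lemma~\ref{lemma:E-Sigma-perturb} then gives $\|V\|(K) \leq \nu_k(G_k)$. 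Radial symmetry of $\nu_k$ bounds $\nu_k(G_k)$ by the $\cH^{n+1}$-measure of a shrinking neighborhood of the null compact set $\nabla^V U(K) \subset \bS^{n+1}$.

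\textbf{Your route.} You pass to a limit pair $(U,h)$ and a limit plan $\gamma \in \Pi(\|V\|, \sigma_{\bS^{n+1}})$. The contact analysis then becomes exact, $\xi = \nabla^V U(x)$ with no $\ve_k$, and the contradiction reduces to $\sigma_{\bS^{n+1}}(\nabla^V U(K)) = 0$.

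The cost is a compactness argument for the dual potentials. Your $c$-transform over $\on{spt}\|V\|$ is the right device for equicontinuity, and the contacts of $\gamma_k$ persist because $\tilde h_k \geq h_k - c_k$. You also need stability of $\int c_k \, d\gamma_k = 0$ under uniform convergence, which holds. In exchange you no longer need uniform convergence of $\nabla^V U_k$ on $K$ or the cone estimate. You also obtain a limiting optimal plan that could be reused later in Section~\ref{section:rigidity}.

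\textbf{One point in Step 2 needs repair.} The paper's $U_k$ is defined as $\sup_{\xi}(h_k(\xi) + \la x, \xi \rg)$ over all $x \in \bR^{n+2}$, so the supporting inequality holds everywhere and Lemma~\ref{lemma:BE-lemma-4} applies as stated. Your $\tilde h_k$ is a $c$-transform over $\on{spt}\|V\|$ only, and $\tilde h_k \geq h_k - c_k$. So $\tilde U_k(z) \geq \tilde h_k(\xi) + \la z, \xi \rg$ is no longer guaranteed for $z \notin \on{spt}\|V\|$. Your limit pair therefore satisfies $U(z) - h(\xi) - \la z,\xi\rg \geq 0$ only for $z \in \on{spt}\|V\|$. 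But membership in $\partial_{M_j} U(x)$ requires the inequality for all $z \in M_j \cap B_\rho(x_0)$, and $M_j$ need not lie in $\on{spt}\|V\|$. Hence Lemma~\ref{lemma:BE-lemma-4} does not apply verbatim.

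The conclusion survives, as follows:
\begin{enumerate}[(i)]
\item For $z \in \cB_j$ near $x$ you still have $U(z) - U(x) \geq \la z - x, \xi \rg = \la z - x, \xi^{\top} \rg + O(d_j(x,z)^2)$, since $\xi^\perp$ is normal to the $C^2$ sheet $M_j$ at $x$.
\item Comparing this with the first-order expansion of $U|_{M_j}$ at the good point $x$ gives $\la \nabla^V U(x) - \xi^{\top}, w \rg \geq 0$ for every unit $w \in T_x M_j$ that is a limit of directions $(z-x)/|z-x|$ with $z \in \cB_j$, $z \to x$.
\item Because $x$ is an $\cH^n$-density point of $\cB_j$ in $M_j$, every unit direction arises this way, so $\xi^{\top} = \nabla^V U(x)$.
\end{enumerate}
With this replacement, the rest of your argument goes through.
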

\begin{proof}
    We argue by contradiction: suppose that the set $E := \{ x \in \on{spt} \|V\| : |\nabla^V U(x)| = 1 \}$ has positive $\|V\|$-measure.
    By Lemma~\ref{lemma:positive-measure-subset},  there is a compact set $K \subset E$ contained in a $C^2$ coordinate patch inside a Borel set $\cB_j \subset M_j$, containing points that are jointly good for the functions $\{ U_k \}$ and $U$, such that $\|V\|(K)>0$, $\nabla^V U_k \to \nabla^V U$ uniformly, and $x \mapsto \nabla^V U(x)$ is Lipschitz on $K$.
    We set
    \[
    \ve_k := \sup_{x \in K} |\nabla^V U_k(x) - \nabla^V U(x)| + 2 \, \sup_{x \in K} |\nabla^V U_k(x) - \nabla^V U(x)|^{\frac{1}{2}}, \qquad \ve_k \to 0 \quad \text{as } \; k \to \infty.
    \]
    We now claim that any contact set point $(x,\xi) \in \Gamma_{U_k}$ with $x \in K$ has $\textup{dist}( \xi, \nabla^V U(K)) \leq \ve_k$.
    By Lemma~\ref{lemma:BE-lemma-4}, the tangential projection $\xi^{\top} = \pi_{T_x M_j} \xi \in \partial_{M_j} U_k(x)$.
    Since $U_k|_{M_j}$ is differentiable at $x$, the tangential projection of $\partial_{M_j} U_k$ consists of the single vector $\nabla^V U_k(x)$, hence $\xi^{\top} = \nabla^V U_k(x)$.
    Using $|\xi| \leq 1$ and $|\nabla^V U(x)|  = 1$ on $K$, we obtain $|\xi^{\perp}|^2 \leq 1 - |\xi^{\top}|^2 \leq 1 - |\nabla^V U_k(x)|^2$.
    On the other hand, $|\nabla^V U_k(x)| \geq 1 - |\nabla^V U_k(x) - \nabla^V U(x)|$.
    Consequently, 
    \begin{align*}
    |\xi^{\perp}|^2 &\leq 1 - (1 - |\nabla^V U_k(x) - \nabla^V U(x)|)^2 \leq 2 \, |\nabla^V U_k(x) - \nabla^V U(x)|, \\
    |\xi - \nabla^V U(x)| &\leq |\xi^{\top} - \nabla^V U(x)| + |\xi^{\perp}| = |\nabla^V U_k - \nabla^V U| + |\xi^{\perp}| \leq \ve_k.
    \end{align*}
    Thus, for $G_k := \{ \xi \in \bar{B}^{n+m} : \textup{dist}( \xi, \nabla^V U(K)) \leq \ve_k \}$, we have $U_k(x) - h_k(\xi) - \la x, \xi \rg > 0$ for every $(x,\xi) \in K \times (\bar{B}^{n+m} \setminus G_k)$, and Lemma~\ref{lemma:E-Sigma-perturb} implies $\|V\|(K) \leq \nu_k(G_k)$ for $d \nu_k(\xi) = \rho_k(|\xi|^2) \, d \xi$.
    
    On the other hand, since the function $\nabla^V U$ is Lipschitz on $K$, the set $\nabla^V U(K)$ is $n$-rectifiable and satisfies $\nabla^V U(K) \subset \bS^{n+m-1}$, because $K \subset E$ and $|\nabla^V U| = 1$ on $E$.
    Since $m \geq 2$, this implies $\cH^{n+m-1} ( \nabla^V U(K)) = 0$.
    Because the measures $\nu_k$ are radial probability measures, working in polar coordinates shows that for every Borel set $A \subset \bS^{m+n-1}$, we have
    \[
    \nu_k ( \{ r \omega : 0 \leq r \leq 1, \; \omega \in A \} ) = C_{n,m}\cH^{n+m-1}(A), \qquad C_{n,m} := \cH^{n+m-1}(\bS^{n+m-1})^{-1}.
    \]
    Therefore, $\xi = r \omega \in \bar{B}^{n+m}$ has $\textup{dist} ( \xi, \nabla^VU(K)) \leq \ve_k$, so for all $k$ sufficiently large, we obtain $\textup{dist}_{\bS^{n+m-1}} ( \omega, \nabla^V U(K)) \leq C \ve_k$.
    This implies that
    \[
    \nu_k(G_k) \leq C_{n,m} \, \cH^{n+m-1} ( \{ \omega \in \bS^{n+m-1} : \textup{dist}_{\bS^{n+m-1}} ( \omega, \nabla^V U(K)) \leq C \ve_k\}  ) .
    \]
    However, the set $\nabla^V U(K)$ is compact and has zero $\cH^{n+m-1}$-measure, so the measures of the above $\ve_k$-neighborhoods tend to zero as $\ve_k \to 0$ and $0 < \|V\|(K) \leq \nu_k(G_k) \to 0$, a contradiction.
    Thus, $\|V\|(E) = 0$ and $|\nabla^V U(x)| < 1$ for $\|V\|$-a.e.~ $x$.
    This completes the proof of our assertion.
\end{proof}

\begin{lemma}\label{lemma:converge-to-uniform-measure}
Consider a sequence of densities $\rho_k : [0,\infty) \to [0,\infty)$ with $\sup_{ [0, 1 - k^{-1}] } \rho_k \leq o(1)$ and a sequence of vectors $p_k \in \bR^n$ with $p_k \to p$, where $|p|<1$.
We define a finite measure $\tilde{\nu}_k$ on $\bR^2$ by
\[
d \tilde{\nu}_k(y) := \alpha_k^{-1} \, \rho_k ( |p_k|^2 + |y|^2) \, \mathbf{1}_{ \{ |p_k|^2 + |y|^2 \leq 1 \} } \, dy.
\]
Then, we have $\tilde{\nu}_k \xrightharpoonup{*} \tilde{\nu}_{\infty}$ weakly as Radon measures on $\bR^2$, where $\tilde{\nu}_{\infty}$ denotes the uniform probability measure on the circle $\{ y \in \bR^2 : |y| = \sqrt{1 - |p|^2} \}$.
\end{lemma}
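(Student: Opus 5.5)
The plan is to test $\tilde{\nu}_k$ against a fixed $g \in C_c(\bR^2)$, pass to polar coordinates, and reduce everything to the one-dimensional masses $\int_{1-k^{-1}}^1 \rho_k(t)\,dt$. I use the standing normalization of the densities $\rho_k$ from the proof of Theorem~\ref{thm:optimal-MSS}: $\int_{\bar{B}^{n+2}} \rho_k(|\xi|^2)\,d\xi = 1$, $\sup_{[0,1-k^{-1}]}\rho_k = o(1)$ and $\sup_{[1-k^{-1},1]}\rho_k \leq \frac{2k}{(n+2)|B^{n+2}|} + o(k)$. Here $\alpha_k = \alpha_{\rho_k}$ is the constant of~\eqref{eqn:alpha-rho-constant} with $m=2$.

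\textbf{Step 1 (normalizing constants).} In polar coordinates on $\bR^{n+2}$, with $t = |\xi|^2$, the normalization reads
\[
\tfrac{(n+2)|B^{n+2}|}{2}\int_0^1 \rho_k(t)\, t^{\frac{n}{2}}\,dt = 1 .
\]
On $[0,1-k^{-1}]$ the integrand is $o(1)$. On $[1-k^{-1},1]$ we have $t^{n/2} = 1 + O(k^{-1})$, and that interval carries mass at most $O(1)$ by the upper bound on $\rho_k$. Hence
\[
\int_{1-k^{-1}}^1 \rho_k\,dt \to \frac{2}{(n+2)|B^{n+2}|}, \qquad \int_0^1 \rho_k\,dt \to \frac{2}{(n+2)|B^{n+2}|}.
\]
For any $z$, the substitution $t = |z|^2 + |y|^2$ (so $y\,dy$ becomes $\pi\,dt$ after angular integration in $\bR^2$) gives
\[
\int_{\{|z|^2+|y|^2\leq 1\}} \rho_k(|z|^2+|y|^2)\,dy = \pi\int_{|z|^2}^1 \rho_k(t)\,dt \leq \pi \int_0^1 \rho_k\,dt .
\]
Equality holds at $z=0$, so $\alpha_k = \pi\int_0^1\rho_k\,dt \to \frac{2\pi}{(n+2)|B^{n+2}|} > 0$. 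This matches the upper bound found in the proof of Theorem~\ref{thm:optimal-MSS}, and in particular $\alpha_k$ is bounded away from zero.

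\textbf{Step 2 (testing against $g$).} Set $a_k := 1-|p_k|^2 \to a := 1-|p|^2 > 0$. The same substitution, now with $t = |p_k|^2 + s^2$ and $y = s\omega$, gives
\[
\int g\,d\tilde{\nu}_k = \frac{1}{2\alpha_k}\int_{|p_k|^2}^1 \rho_k(t)\int_{\bS^1} g\bigl(\sqrt{t-|p_k|^2}\,\omega\bigr)\,d\omega\,dt .
\]
The portion with $t \leq 1-k^{-1}$ is bounded by $\frac{\pi\|g\|_\infty}{\alpha_k}\, o(1) \to 0$, using Step 1. For $t \in [1-k^{-1},1]$ we have $\sqrt{t - |p_k|^2} \to \sqrt{a}$ uniformly, because $a > 0$ and $p_k \to p$. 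The uniform continuity of $g$ then lets us replace the inner integral by $\int_{\bS^1} g(\sqrt{a}\,\omega)\,d\omega$ with an $o(1)$ error; this error is multiplied by the bounded mass $\int_{1-k^{-1}}^1\rho_k$.

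\textbf{Step 3 (conclusion).} Combining the two steps,
\[
\int g\,d\tilde{\nu}_k = \frac{2\pi}{2\alpha_k}\int_{1-k^{-1}}^1\rho_k\,dt\cdot \frac{1}{2\pi}\int_{\bS^1} g(\sqrt{a}\,\omega)\,d\omega + o(1).
\]
By Step 1 the prefactor tends to $\frac{\pi \cdot 2/((n+2)|B^{n+2}|)}{2\pi/((n+2)|B^{n+2}|)} = 1$. The remaining factor is exactly $\int g\,d\tilde{\nu}_\infty$, which gives the weak-$*$ convergence, and the total masses converge to $1$.

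The only delicate point is Step 1. We need the exact limit of $\alpha_k$, not merely the upper bound proved earlier, and this requires the normalization and concentration properties of $\rho_k$ rather than only the hypothesis $\sup_{[0,1-k^{-1}]}\rho_k = o(1)$. The observation that the supremum defining $\alpha_k$ is attained at $z=0$ settles this. Everything else is uniform continuity, together with the requirement $|p|<1$ so that the limiting circle has positive radius.
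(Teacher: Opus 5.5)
Your proposal is correct and follows essentially the same route as the paper: polar coordinates with $t=|p_k|^2+s^2$, the identification $\alpha_k=\pi\int_0^1\rho_k\,dt$ (supremum attained at $z=0$), a lower bound on $\alpha_k$ from the normalization $\int_{\bar{B}^{n+2}}\rho_k(|\xi|^2)\,d\xi=1$, and uniform continuity of the test function near $t=1$. The only difference is that you compute the exact limits of $\alpha_k$ and $\int_{1-k^{-1}}^1\rho_k\,dt$ using the extra upper bound on $\rho_k$ near $1$. The paper instead works directly with the probability measures $\rho_k\,dt/\int_0^1\rho_k\,dt$, which is exactly what the factor $\pi/\alpha_k$ produces, so it needs only the lower bound $\int_0^1\rho_k\,dt\geq\int_0^1\rho_k\,t^{n/2}\,dt=\frac{2}{(n+2)|B^{n+2}|}$ and neither the exact value of $\lim\alpha_k$ nor that upper bound.
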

\begin{proof}
We use polar coordinates $y = s \omega$ with $\omega \in \bS^1$ and write $d \bar{\omega}$ for the normalized probability measure on $\bS^1$, so $d y = 2 \pi s \, ds \, d \bar{\omega}$.
Let $t = |p_k|^2 + s^2$, so $s \, ds = \frac{1}{2} dt$ and any $\varphi \in C_c(\bR^2)$ satisfies
\begin{align*}
\int_{\bR^2} \varphi \, d \tilde{\nu}_k &= \frac{1}{\alpha_k} \int_{ \{ |p_k|^2 + |y|^2 \leq 1 \} } \varphi(y) \rho_k ( |p_k|^2 + |y|^2) \, dy = \frac{\pi}{\alpha_k} \int_{ |p_k|^2 }^1 \rho_k(t) \int_{\bS^1} \varphi \bigl( \sqrt{ t - |p_k|^2 } \, \omega \bigr) \, d \bar{\omega} \, dt \\
&= \bigl( \int_0^1 \rho_k(t) \, dt \Bigr)^{-1} \int_{|p_k|^2}^1 \rho_k(t) \int_{\bS^1} \varphi( { \textstyle \sqrt{t - |p_k|^2} } \, \omega) \, d \bar{\omega} \, dt .
\end{align*}
In the last step, we used the fact that $\alpha_k = \pi \int_0^1 \rho_k(t) \, dt$ for $m=2$.
We now observe that
\[
\int_0^1 \rho_k(t) \, dt \geq \int_0^1 \rho_k(t) t^{\frac{n}{2}} \,dt = \frac{2}{(n+2) |B^{n+2}|} \int_{\bar{B}^{n+2}} \rho_k ( |\xi|^2) \, d \xi = \frac{2}{(n+2) |B^{n+2}|}.
\]
Thus, the normalized measures $\frac{\rho_k(t) \, dt}{\int_0^1 \rho_k(s) \, ds}$ have denominator bounded below while $\sup_{ [0, 1- k^{-1}] } \rho_k = o(1)$, so they converge weakly to $\delta_1$ on $[0,1]$ because $|p_k| \to |p| < 1$.
Let $\Phi_k(t) = \int_{\bS^1} \varphi( \sqrt{t - |p_k|^2} \, \omega) \, d \bar{\omega}$, for $t \in [ |p_k|^2, 1]$, and $\Phi_{\infty} = \int_{\bS^1} \varphi ( \sqrt{1 - |p|^2} \, \omega) \, d \bar{\omega}$, so the uniform continuity of $\varphi$ implies that
\begin{equation}\label{eqn:small-eps-Phi-infty}
\lim_{\ve \downarrow 0} \limsup_{k \to \infty} \sup_{ t \in [1-\ve, 1] } \Bigl| \int_{\bS^1} \varphi ( \textstyle{ \sqrt{t - |p_k|^2} } \, \omega) \, d \bar{\omega} - \int_{\bS^1} \varphi ( \textstyle{ \sqrt{1-|p_k|^2} } \, \omega) \, d \bar{\omega} \Bigr| = 0.
\end{equation}
We may therefore express
\[
\int_{|p_k|^2}^1 \rho_k(t) \Phi_k(t) \, dt = \int_{|p_k|^2}^{1-\ve} \rho_k(t) \Phi_k(t) \, dt + \int_{1-\ve}^1 \rho_k(t) \Phi_k(t) \, dt.
\]
We send $k \to \infty$, so the first region contributes $o(1)$ due to $\frac{\rho_k(t) \, dt}{\int_0^1 \rho_k(s) \, ds} \to \delta_1$.
In view of~\eqref{eqn:small-eps-Phi-infty}, the second integral is uniformly close to $\Phi_{\infty}$ after sending $k \to \infty$ followed by $\ve \downarrow 0$.
Combining these computations, we obtain $\lim_{k \to \infty} \int_{\bR^2} \varphi(y) \, d \tilde{\nu}_k(y) = \Phi_{\infty} = \int_{\bS^1} \varphi ( \sqrt{1 - |p|^2} \, \omega) d \bar{\omega}$.
This implies the claimed convergence $\tilde{\nu}_k \xrightharpoonup{*} \tilde{\nu}_{\infty}$, completing the proof of our assertion.
\end{proof}

We conclude from the above discussion that there is a set of full $\|V\|$-measure consisting of the $x \in \bR^{n+2}$ that are jointly good points for $U$ and each of the functions $U_k$, with $\Theta^n( \|V\|,x) = 1$ and
\[
U_k(x) \to U(x), \qquad \nabla^V U_k(x) \to \nabla^V U(x), \qquad \text{and} \qquad |\nabla^V U(x)|<1.
\]
\begin{proposition}\label{prop:reduce-to-nice-configuration}
For every compactly supported Lipschitz function $\psi$, we have
\begin{equation}\label{eqn:equality-limit-psi-theta(x)}
    n |B^n|^{\frac{1}{n}} \int \psi \, d \|V \| = - \int \la \nabla^V \psi, \nabla^V U \rg \, d \|V\| + \int \psi \, d |\delta_s V|.
\end{equation}
Moreover, for $\|V\|$-a.e.~ $x$ we have that $H_V=0$, $\mathrm{I\!I}_V =0$, $D^2_V U = |B^n|^{\frac{1}{n}} \textup{id}_{T_x V}$, and
\begin{equation}\label{eqn:subdifferential-U}
    \partial U(x) = \nabla^V U(x) + \{ y \in T_x^{\perp} V : |y| \leq \textstyle{ \sqrt{1 - |\nabla^V U(x)|^2} } \}. 
\end{equation}
\end{proposition}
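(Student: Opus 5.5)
We plan to pass to the limit $k\to\infty$ in the pointwise Laplacian inequality and in the integration by parts inequality, and then use the equality assumption to force every intermediate inequality to be an equality $\|V\|$-a.e. Take $f=1$ in Proposition~\ref{prop:laplacian-inequality} for $U_k$ and $\rho_k$, and recall $\Theta^n(\|V\|,x)=1$ from Lemma~\ref{lemma:indecomposable}. This gives $n\alpha_k^{-1/n}\le \Delta_V U_k+|H_V|$ $\|V\|$-a.e., with $\alpha_k\to |B^n|^{-1}$ when $m=2$ (by the computation in the proof of Theorem~\ref{thm:optimal-MSS}; the reverse inequality $\alpha_k\ge \pi\int_0^1\rho_k\,dt\ge\ldots$ also holds, as in Lemma~\ref{lemma:converge-to-uniform-measure}). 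Integrate against $\psi\ge0$ and apply Proposition~\ref{prop:fancy-integration-by-parts}. Then $n\alpha_k^{-1/n}\int\psi\,d\|V\|\le -\int\langle\nabla^V\psi,\nabla^VU_k\rangle\,d\|V\|+\int\psi\,d\|\delta V\|$. Choosing $\psi\equiv1$ near the support, the equality hypothesis makes the total defect tend to zero. The defect is the sum of three nonnegative measures: the pointwise defect $(\Delta_VU_k+|H_V|-n\alpha_k^{-1/n})\|V\|$, the integration-by-parts defect from Proposition~\ref{prop:measure-zeta-q} (namely $\lambda_k-\Delta_VU_k\|V\|$), and $(1-\langle q_k,\zeta\rangle)|\delta_sV|$.

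Passing to weak-$*$ limits with $\nabla^VU_k\to\nabla^VU$ in $L^1(\|V\|)$, each defect tends to zero as a measure. Hence, in the limit, the measure $\lambda:=\lim \Delta_VU_k\|V\|$ satisfies $\lambda+|H_V|\|V\|=n|B^n|^{1/n}\|V\|$ and $q=\zeta$ $|\delta_sV|$-a.e. This yields the identity $n|B^n|^{1/n}\int\psi\,d\|V\|=-\int\langle\nabla^V\psi,\nabla^VU\rangle\,d\|V\|+\int\psi\,d|\delta_sV|-\int\psi|H_V|\,d\|V\|$. To obtain~\eqref{eqn:equality-limit-psi-theta(x)}, we must show $H_V=0$. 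Here we revisit the Cauchy--Schwarz step of Proposition~\ref{prop:laplacian-inequality}: equality requires $\langle H_V,y\rangle=-|H_V|$ for the essentially relevant $y$. By Lemma~\ref{lemma:converge-to-uniform-measure}, however, the limiting normal fibers fill the whole circle of radius $\sqrt{1-|\nabla^VU|^2}>0$ (Lemma~\ref{lemma:gradient-bound}). Both Step~1 and the AM--GM step must then be asymptotically sharp on almost all of that circle, which is incompatible with $H_V\ne0$.

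To obtain the pointwise conclusions, we quantify Step~1 and Step~2 in the limit. At a jointly good point, Step~1 gives $1\le\int_{S_k(x)}\det(D^2_VU_k-\langle\mathrm{I\!I}_V,y\rangle)\,d\tilde\nu_k(y)\cdot\alpha_k$. AM--GM bounds the determinant by $\bigl((\Delta_VU_k-\langle H_V,y\rangle)/n\bigr)^n$, and we use the vanishing defect. Since $\Delta_VU_k$ converges only in the weak sense, we would first pass to a subsequence along which the pointwise defect tends to zero $\|V\|$-a.e. (it tends to zero in $L^1$). Lemma~\ref{lemma:converge-to-uniform-measure} then converts the fiber integral into an average over the circle. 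Equality in AM--GM for $\tilde\nu_\infty$-a.e.\ $y$ on the circle, together with the linearity of $y\mapsto D^2_VU-\langle\mathrm{I\!I}_V,y\rangle$, forces $\langle\mathrm{I\!I}_V,y\rangle$ to be a multiple of the identity that vanishes on average, hence zero. Thus $\mathrm{I\!I}_V=0$, $H_V=0$, and the limiting Hessians equal $|B^n|^{1/n}\mathrm{id}$. The main obstacle is to identify this limit with the Alexandrov Hessian $D^2_VU$ of the limit function, since the Hessians $D^2_VU_k$ do not converge pointwise. We would handle this by proving the measure identity $\lambda=n|B^n|^{1/n}\|V\|$ with $\lambda_s=0$, and then applying Proposition~\ref{prop:measure-zeta-q} together with an upper bound argument. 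That argument uses the supporting quadratics $\langle \xi, x\rangle+h_k(\xi)$ at contact points, which give $D^2_VU\ge |B^n|^{1/n}\mathrm{id}$ via the fiber covering. Matching this lower bound against the trace bound yields equality of the full matrix.

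Finally, for~\eqref{eqn:subdifferential-U}, note that $\gamma_k$ converges to an optimal plan between $\|V\|$ and $\sigma_{\mathbb S^{n+1}}$ concentrated on $\Gamma_U$. By Lemma~\ref{lemma:BE-lemma-4}, every $\xi\in\partial U(x)$ has $\xi^\top=\nabla^VU(x)$ and $|\xi|\le1$, which gives the inclusion ``$\subset$''. For ``$\supset$'', Lemma~\ref{lemma:converge-to-uniform-measure} and the vanishing defects show that the contact fiber over $\|V\|$-a.e.\ $x$ contains the full circle $\nabla^VU(x)+\sqrt{1-|\nabla^VU|^2}\,\mathbb S^1$. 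Since $\partial U(x)$ is convex and closed, it contains the whole disk.
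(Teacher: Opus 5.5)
Up to \eqref{eqn:equality-limit-psi-theta(x)} and $H_V=0$, $\mathrm{I\!I}_V=0$, you follow the paper's Steps 1--3: you split the defect into three nonnegative measures, then use Lemma~\ref{lemma:converge-to-uniform-measure} and equality in AM--GM. The genuine gap is the step you flag yourself, namely identifying the Alexandrov Hessian of the limit $U$. Your proposed mechanism does not reach $U$, for three reasons.
\begin{enumerate}[(1)]
\item The functions $\la\xi,x\rg+h_k(\xi)$ are affine in $x$, so they encode only the convexity of $U_k$.
\item Any covering argument run with $(U_k,h_k,\nu_k)$ returns information about $D^2_VU_k$, which you already have. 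Alexandrov Hessians are not stable under locally uniform convergence.
\item A mass-balance argument controls only a Jacobian determinant, not the matrix inequality $D^2_VU\ge|B^n|^{1/n}\textup{id}$ you assert.
\end{enumerate}
The paper uses a different idea. On a sheet $M_j$, the shifted distributional Hessian of the semiconvex function $U_k|_{M_j}$ is a nonnegative measure. It dominates its absolutely continuous part $(D^2_VU_k+C_Wg_{M_j})\,\cH^n\mres(\cB_j\cap W)$. The former converges weak-$*$ to the distributional Hessian of $U|_{M_j}$. The latter converges because $D^2_VU_k\to|B^n|^{1/n}\textup{id}$ in $L^1(\|V\|)$, by Vitali and $|A|\le\textup{tr}A$ for $A\ge0$. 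The absolutely continuous part of the limit is $D^2_VU$, so $D^2_VU\ge|B^n|^{1/n}\textup{id}$ a.e. The trace bound from Proposition~\ref{prop:measure-zeta-q} then forces equality.

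A covering route can be salvaged by running it for $U$ itself, using the limit plan on $\Gamma_U$ with target $\sigma_{\bS^{n+1}}$. Since $\sigma_{\bS^{n+1}}$ is singular, the coarea step of Proposition~\ref{prop:laplacian-inequality} must be replaced. Using $\mathrm{I\!I}_V(x_0)=0$, you can project onto $T_{x_0}V$, under which $\sigma_{\bS^{n+1}}$ becomes the uniform measure on the unit $n$-ball. This gives $\det D^2_VU(x_0)\ge|B^n|$. AM--GM against $\Delta_VU\le n|B^n|^{1/n}$ then yields $D^2_VU=|B^n|^{1/n}\textup{id}$.

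Your inclusion $\supset$ in \eqref{eqn:subdifferential-U} is also only asserted. Lemma~\ref{lemma:converge-to-uniform-measure} describes the fibers of $\nu_k$, not where the contacts of $U$ lie. Suppose $\xi_0=p_0+y_0\notin\partial U(x_0)$, with $p_0=\nabla^VU(x_0)$ and $y_0$ on the circle. You must show that no contacts occur near $(x_0,\xi_0)$ and that this costs a definite fraction of mass. The paper does this at level $k$. Kuratowski convergence of $\partial U_k$ to $\partial U$ excludes $U_k$-contacts in $B_{\eta_1}(\xi_0)$ over $Q_r$. A cutoff $\chi$ vanishing near $\xi_0$ is then inserted into the covering estimate. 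Together with $\alpha_k\det D^2_VU_k(x_0)\to1$, this gives $1\le\int\chi(p_0+y)\,d\tilde\nu_\infty(y)<1$.

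Two smaller repairs are needed in your Step 2.
\begin{itemize}
\item You need a bound on $D^2_VU_k(x)$ to extract a limit $A$. The paper gets it from $S(U_k;x)\neq\varnothing$ together with the Laplacian convergence.
\item The inference ``a multiple of the identity that vanishes on average, hence zero'' is a non sequitur, since $\la\mathrm{I\!I}_V,y\rg$ always averages to zero over the circle. What you actually get is that the normalized determinant attains its maximum at \emph{every} point of the circle. Hence $A-\la\mathrm{I\!I}_V,y\rg=|B^n|^{1/n}\textup{id}$ there, and taking traces gives $\la H_V,y\rg=-|H_V|$ on the whole circle. So $H_V=0$, and then $y\mapsto -y$ gives $\mathrm{I\!I}_V=0$.
\end{itemize}
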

\begin{proof}
We proceed in a number of steps.

\smallskip \noindent \textbf{Step 1: The Laplacian.}
    Using Proposition~\ref{prop:laplacian-inequality} and the property $\Theta^n ( \|V\|,x) = 1$ a.e., we have
    \[
    F_k(x) := \Delta_V U_k(x) + |H_V(x)| - n \alpha_k^{- \frac{1}{n}} \geq 0
    \]
    at every good point $x \in \on{spt} \|V\|$.
    Recall that $F_k \in L^1( \|V\|)$ because the Alexandrov Hessian of $U_k|_{M_j}$ is a measurable symmetric tensor field on each Borel sheet $\cB_j \subset M_j$, while $\Delta_V U_k \geq - |H_V|$ with $H_V \in L^1(\|V\|)$.
    Also, the integration by parts inequality of Proposition~\ref{prop:fancy-integration-by-parts} gives an upper bound on $\int \Delta_V U_k \, d\|V\|$, so the negative part of $\Delta_V U_k$ is integrable and the integral is bounded from above; thus, $\Delta_V U_k \in L^1(\|V\|)$.
    Moreover, with $\|V\|(\bR^{n+2}) = 1$ and $\alpha_k \to |B^n|^{-1}$, we find
    \begin{align*}
        \Bigl[ |\delta_s V| (\bR^{n+2}) - \int \Delta_V U_k \, d \|V\| \Bigr] + \int F_k \, d\|V\| = \| \delta V\|(\bR^{n+m}) - n \alpha_k^{- \frac{1}{n}} \to 0 \quad \text{as } \; k \to \infty.
    \end{align*}
    Taking a sequence of compactly supported functions $\varphi$ converging to $1$ in Proposition~\ref{prop:fancy-integration-by-parts}, we obtain $\int_{\Omega} \Delta_V U_k \, d \|V\| \leq |\delta_s V|(\Omega)$ for each $U_k$, so both left-hand side summands are non-negative, hence $\int F_k \, d \|V\| \to 0$ and $\int \Delta_V U_k \, d\|V\| \to |\delta_s V|(\bR^{n+2}) $. 
    Because $F_k \in L^1 (\|V\|)$ are non-negative functions, after passing to a further subsequence, this implies that $F_{k} \to 0$ for $\|V\|$-a.e.~ $x$, and hence 
    \begin{equation}\label{eqn:convergence-laplacian}
        \Delta_V U_k + |H_V| \to n |B^n|^{\frac{1}{n}} \qquad \text{for } \; \|V\| \text{-a.e.~ } x.
    \end{equation}

\noindent \textbf{Step 2: Mean curvature and second fundamental form.}
    At a point $x$ that is jointly good for $U$ and each $U_k$ and satisfies all the $\|V\|$-a.e.~ properties deduced above, we let $M_j$ be a representing $C^2$ sheet for $V$ at such a point and we denote by $S(U_k;x)$ the set $S(x)$ from Step 1 of Proposition~\ref{prop:laplacian-inequality} adapted to the function $U_k$. 
    The inequality~\eqref{eqn:density-bound-step-1} therein shows, in particular, that $S(U_k;x_0) \neq \varnothing$ for $\|V\|$-a.e.~ jointly good point $x_0$ and every $k$, so we can find a $y_k$ with $|y_k| \leq 1$ and $D^2_{M_j} U_k(x) \geq \la \mathrm{I\!I}_{M_j}(x), y_k \rg \geq - |\mathrm{I\!I}_{M_j}(x)|$.
    Combined with the property~\eqref{eqn:convergence-laplacian}, this implies uniform upper and lower bounds on $\textup{tr}(D^2_{M_j} U_k(x))$ depending only on $\mathrm{I\!I}_{M_j}(x)$.
    Thus, the matrices $D^2_{M_j} U_k(x) \in \textup{Sym}^2 (T_x^{\vee} V)$ are uniformly bounded for $\|V\|$-a.e.~ $x$, so we can extract a subsequential limit $A \in \textup{Sym}^2(T_x^{\vee} V)$.
    At a good point $x$, the role of the smooth function $\hat{U}$ in Proposition~\ref{prop:laplacian-inequality} is played by any smooth second-order representative of the Alexandrov expansion of $U_k|_{M_j}$ at $x$, so we can write $\nabla^{M_j} U_k(x)$ and $D^2_{M_j}(x)$ in place of the corresponding derivatives of $\hat{U}$.
    We also write
    \begin{align*}
    \det\!{}_+ L &= \det L \quad \text{for } \; L \geq 0, \qquad \det\!{}_+ L = 0 \quad \text{otherwise}, \\
    f_k(y) &= n^n \bigl(\textup{tr} \, D^2_{M_j} U_k(x) + |H_V(x)| \bigr)^{-n} \det\!{}_+ \bigl( D^2_{M_j} U_k(x) - \la \mathrm{I\!I}_{M_j}(x), y \rg \bigr), \qquad \text{for each } \; k,
    \end{align*}
    which is a continuous function of $y$, by the continuity of $\det_+$.
    In terms of the measures $\tilde{\nu}_k$ on $T_x^{\perp} V$, the inequality~\eqref{eqn:density-bound-step-1}, applied to each sufficiently large $k$ and using $\Theta^n ( \|V\|,x) = 1$, assumes the form
        \[
    n^n \alpha_k^{-1} \, \bigl( \textup{tr} \, D^2_{M_j} U_k(x) + |H_V(x)| \bigr)^{-n} \leq \int_{ \{ |p_k|^2 + |y|^2 \leq 1 \} } f_k \, d \tilde{\nu}_k \leq 1.
    \]
    Here, we used~\eqref{eqn:convergence-laplacian} to find $\Delta_{M_j} U_k(x) + |H_V(x)| > 0$ for all sufficiently large $k$, so if $D^2_{M_j} U_k(x) - \la \mathrm{I\!I}_{M_j}(x), y \rg \geq 0$, the arithmetic-geometric mean inequality gives $0 \leq f_k (y) \leq 1$ due to 
    \[
    \det ( D^2_{M_j} U_k(x) - \la \mathrm{I\!I}_{M_j}(x) , y \rg ) \leq n^{-n} ( \textup{tr} \, D^2_{M_j} U_k(x) - \la H_V(x), y \rg )^n
    \]
    and $- \la H_V , y \rg \leq |H_V| \cdot |y| \leq |H_V|$ since $|y| \leq 1$ on $\textup{spt} \, \tilde{\nu}_k$. 
    Because $D^2_{M_j} U_k(x) \to A$, we obtain $f_k(y) \to f(y)$ uniformly, where $f(y) = \frac{1}{|B^n|} \det\!{}_+ (A - \la \mathrm{I\!I}_{M_j}(x), y \rg )$.
    The same inequality shows that on the circle $\{ |y| = \sqrt{1 - |\nabla^V U|^2} \}$, we have $f(y) = 0$ if $A - \la \mathrm{I\!I}_{M_j}(x), y \rg$ is not positive semidefinite, and $0 \leq f(y) \leq 1$ otherwise, due to
    \begin{equation}\label{eqn:kappa-n-inequality}
        \det ( A - \la \mathrm{I\!I}_{M_j}(x), y \rg ) \leq \Bigl( \frac{\on{tr} A - \la H_V(x), y \rg}{n} \Bigr)^n \leq \Bigl( \frac{\on{tr} A + |H_V(x)|}{n} \Bigr)^n = |B^n|.
    \end{equation}
    We now apply Lemma~\ref{lemma:converge-to-uniform-measure} with $p_k = \nabla^{M_j} U_k \to p = \nabla^{M_j} U$ for a jointly good point $x$, so $|p|<1$ and the measures $\tilde{\nu}_k$ on the two-dimensional vector space $T_x^{\perp} V$ satisfy $\tilde{\nu}_k \xrightharpoonup{*} \tilde{\nu}_{\infty}$, with $\tilde{\nu}_{\infty}$ the uniform measure on the circle $\{ y \in T_x^{\perp} V : |y| = \sqrt{1 - |p|^2} \}$.
    Sending $k \to \infty$, the property~\eqref{eqn:convergence-laplacian} shows that the left-hand side tends to $1$, and hence $\int f_k \, d \tilde{\nu}_k \to 1$.
    Using $f_k \to f$ and $\tilde{\nu}_k \xrightarrow{*} \tilde{\nu}_{\infty}$, this leads to $\int f \, d\tilde{\nu}_{\infty} = 1$.
    Because $0 \leq f(y) \leq 1$ on the limiting circle and $\tilde{\nu}_{\infty}$ is a probability measure, this forces $f(y) = 1$, $\tilde{\nu}_{\infty}$-a.e.~ $y$ on $\{ |y| = \sqrt{1 - |\nabla^V U|^2} \}$.
    By the continuity of $f$, this implies $f(y)=1$ for all such $y$, hence $\det_+ ( A - \la \mathrm{I\!I}_{M_j}(x), y \rg) = |B^n| > 0$.
    Thus, the matrix $A - \la \mathrm{I\!I}_{M_j}(x), y \rg$ is positive-definite and attains equality in all steps of the inequality~\eqref{eqn:kappa-n-inequality}, whereby
    \begin{equation}\label{eqn:A-identity-y}
    A - \la \mathrm{I\!I}_{M_j}(x), y \rg = |B^n|^{\frac{1}{n}}  \, \textup{id}_{T_x V}
    \end{equation}
    for every $y \in T_x^{\perp} V$ with $|y| = \sqrt{1 - |\nabla^V U|^2}$.
    Using $\textup{tr}(A) = n |B^n|^{\frac{1}{n}} - |H_V(x)|$ from~\eqref{eqn:convergence-laplacian}, we obtain $\la H_V(x), y \rg = - |H_V(x)|$ for every $y$ on this circle; thus, $H_V(x)=0$.
    Also, taking $y \mapsto -y$ in the equality~\eqref{eqn:A-identity-y}, we obtain $\la \mathrm{I\!I}_{M_j}(x), y \rg = 0$ for every vector on a circle of positive radius in $T_x^{\perp} V$, so $\mathrm{I\!I}_{M_j}(x) = 0$ and $A = |B^n|^{\frac{1}{n}} \, \textup{id}_{T_x V}$.
    Thus, every subsequential limit of the bounded sequence $D^2_V U_k(x)$ equals $A = |B^n|^{\frac{1}{n}} \, \textup{id}_{T_x V}$, so
    the whole sequence converges and $D^2_V U_k(x) \to |B^n|^{\frac{1}{n}} \textup{id}_{T_x V}$ at $\|V\|$-a.e.~ jointly good point $x$.
    Moreover, $H_V =0$ and $\mathrm{I\!I}_V =0$ proves the first part of the claim.

    \smallskip \noindent \textbf{Step 3: The integration by parts.}
    By the above convergence of $D^2_VU_k$ for $\|V\|$-a.e.~ $x$ and $H_V=0$, we have $\delta V = \delta_s V$ and $|\delta_s V|(\bR^{n+2}) = n |B^n|^{\frac{1}{n}}$.
    The convergence~\eqref{eqn:convergence-laplacian} becomes $\int \Delta_V U_k \, d\|V\| \to |\delta_s V| ( \bR^{n+2})$ and $\Delta_V U_k \to n |B^n|^{\frac{1}{n}}$ for $\|V\|$-a.e.~ $x$.
    By Step 1, the functions $\Delta_V U_k \in L^1( \|V\|)$ are uniformly bounded below by an $L^1$ function, converge a.e., and their integrals converge to the integral of the limit, hence Vitali's theorem gives $\Delta_V U_k \to n |B^n|^{\frac{1}{n}}$ strongly in $L^1( \|V\|)$.
    Next, we observe that if $A_k, A$ are positive semidefinite symmetric $n \times n$ matrices, then
    \begin{equation}\label{eqn:Ak-A-inequality}
        |A_k - A| \leq C_n ( \on{tr} A_k + \on{tr} A).
    \end{equation}
    Indeed, a symmetric matrix with $A \geq 0$ and eigenvalues $\lambda_1, \dots, \lambda_n \geq 0$ has Hilbert-Schmidt norm $|A|_{\textup{HS}}^2 = \sum_{i=1}^n \lambda_i^2 \leq ( \sum_{i=1}^n \lambda_i)^2 = ( \on{tr} A)^2$, hence $|A|_{\textup{HS}} \leq \on{tr} A$.
    Thus, by the triangle inequality, 
    \[
    |A_k - A|_{\textup{HS}} \leq |A_k|_{\textup{HS}} + |A|_{\textup{HS}} \leq \on{tr} A_k + \on{tr} A.
    \]
    The bound~\eqref{eqn:Ak-A-inequality} now follows from the equivalence of norms on a finite-dimensional vector space.

    In our situation, the matrices $A_k(x) = D^2_V U_k(x)$ are positive semidefinite and satify $A_k(x) \to A(x) =: |B^n|^{\frac{1}{n}} \textup{id}_{T_x V}$ for $\|V\|$-a.e.~ $x$ and $\textup{tr} \, A_k(x) \to \textup{tr} \, A(x) = n |B^n|^{\frac{1}{n}}$ strongly in $L^1( \|V\|)$.
    Applying the inequality~\eqref{eqn:Ak-A-inequality}, the right-hand side is uniformly integrable and $A_k(x) \to A(x)$ in measure, so Vitali's theorem implies that $D^2_V U_k(x) \to |B^n|^{\frac{1}{n}} \,\textup{id}_{T_x V}$ strongly in $L^1( \|V\|)$.

    We now recall the argument of Propositions~\ref{prop:fancy-integration-by-parts} and~\ref{prop:measure-zeta-q}: writing $\delta_s V = \zeta \, |\delta_s V|$ for $\zeta$ a $\bR^{n+2}$-valued function with $|\zeta|=1$ for $|\delta_s V|$-a.e.~ points, we obtain for each $k$ a signed Radon measure $\lambda_k$ and a function $q_k \in L^{\infty}_{\textup{loc}}( |\delta_s V| ; \bR^{n+2})$ such that $|q_k| \leq 1$ and $\lambda_k - \Delta_V U_k \, \|V\| \geq 0$, while
    \begin{equation}\label{eqn:IBP-for-lambda-k}
    \int \psi \, d \lambda_k = - \int \la \nabla^V \psi, \nabla^V U_k \rg \, d\|V\| + \int \psi \la q_k , \zeta \rg \, d |\delta_s V| \qquad \text{for all } \; \psi \in \textup{Lip}_c (\Omega),
    \end{equation}
    where $\Omega$ is a bounded open neighborhood of $\textup{spt} \, \|V\|$.
    Taking a sequence of $\psi \in C_c^1(\Omega)$ converging to $1$ on $\on{spt} \|V\|$, we obtain $\lambda_k(\Omega) = \int \la q_k, \zeta \rg \, d |\delta_s V| \leq |\delta_s V|(\Omega)$.
    Thus,
    \[
    |\delta_s V|(\Omega) - \int \Delta_V U_k \, d \|V\| = \Bigl[ \lambda_k(\Omega) - \int \Delta_V U_k \, d \|V\| \Bigr] + \bigl[ |\delta_s V| (\Omega) - \lambda_k(\Omega) \bigr]
    \]
    where both terms on the right-hand side are non-negative, and the left-hand side tends to zero by Step 1.
    Therefore, $(\lambda_k - \Delta_V U_k \|V\|)( \Omega) \to 0$ and $\int (1 - \la q_k, \zeta \rg) \, d |\delta_s V| \to 0$.
    Also,
    \[
    |q_k - \zeta|^2 = |q_k|^2 + |\zeta|^2 - 2 \la q_k, \zeta \rg = |q_k|^2 + 1 - 2 \la q_k, \zeta \rg \leq 2 ( 1 - \la q_k, \zeta \rg).
    \]
    Thus, $q_k \to \zeta$ strongly in $L^2( |\delta_s V|)$, hence also in $L^1( |\delta_s V|)$ due to $|\delta_s V|(\bR^{n+2}) < \infty$.
    Moreover, the measures $\lambda_k - \Delta_V U_k \, \|V\| $ and their total masses tend to zero, so $\lambda_k - \Delta_V U_k \, \|V\| \to 0$ in total variation.
    Combining this with the strong $L^1(\|V\|)$ convergence of $\Delta_V U_k$, we obtain $\lambda_k \to n |B^n|^{\frac{1}{n}} \|V\|$ in total variation, hence also in the weak-$\ast$ sense as Radon measures.
    We send $k \to \infty$ and pass to the limit in the integration by parts identity~\eqref{eqn:IBP-for-lambda-k} for $\lambda_k$, using $\nabla^V U_k \to \nabla^V U$ strongly in $L^1(\|V\|)$ and $q_k \to \zeta$ strongly in $L^1 (|\delta_s V|)$.
    This proves the identity~\eqref{eqn:equality-limit-psi-theta(x)} for every $\psi \in \textup{Lip}_c(\Omega)$.

    \smallskip \noindent \textbf{Step 4: The Hessian of $U$.}
    We now study $D^2_VU$.
    Fix a Borel set $\cB_j \subset M_j$ and a relatively compact coordinate patch $W \Subset M_j$.
    Becuase $\mathrm{I\!I}_{M_j}=0$ $\|V\|$-a.e.~ and the functions $U_k$ are convex and $1$-Lipschitz in ambient space, their restrictions to $W$ are uniformly semiconvex, so there is a constant $C_W$ such that the tensor-valued Radon measures $D^2_{M_j} (U_k|_{M_j}) + C_W g_{M_j} \cH^n \mres W$ are positive semidefinite.
    Moreover, their masses are uniformly bounded as in Proposition~\ref{prop:fancy-integration-by-parts}, with 
    \[
    |D^2_{M_j} (U_k|_{M_j}) + C_W g_{M_j} \, \cH^n \mres M_j|(K) \leq \la \Delta_{M_j} (U_k|_{M_j}) + n C_W, \varphi_K \rg
    \]
    for every compact set $K \Subset W$ and some non-negative $\varphi_K \in C^{\infty}_c(W)$ with $\varphi \equiv 1$ near $K$.
    The right-hand side is uniformly bounded because $U_k \to U$ locally uniformly.
    Thus, we can apply weak-$\ast$ compactness and use the uniqueness of the distributional limit to obtain
    \begin{equation}\label{eqn:hessian-convergence}
    D^2_{M_j}(U_k|_{M_j}) + C_W g_{M_j} \, \cH^n \mres M_j \xrightharpoonup{*} D^2_{M_j}(U|_{M_j}) + C_W g_{M_j} \,  \cH^n \mres M_j.
\end{equation}
On $\cB_j$, we have $d \|V\| = d \cH^n$ for $\cH^n$-a.e.~ point due to $\Theta^n ( \|V\|,x) = 1$ for $\|V\|$-a.e.~ $x$, while $D^2_V U_k \to |B^n|^{\frac{1}{n}} \, \textup{id}_{T_x V}$ from Step 2; hence, the property~\eqref{eqn:hessian-convergence} implies that $(D^2_V U_k + C_W g_{M_j}) \, \cH^n \to (|B^n|^{\frac{1}{n}} + C_W) g_{M_j} \, \cH^n$ in total variation.
For semiconvex functions, the absolutely continuous density of the distributional Hessian is the Alexandrov Hessian, so
\begin{equation}\label{eqn:D2UkM_j}
D^2_{M_j} (U_k|_{M_j}) + C_W g_{M_j} \, \cH^n - ( D^2_V U_k + C_W g_{M_j}) \, \cH^n \mres ( \cB_j \cap W) \geq 0
\end{equation}
as positive semidefinite tensor-valued measures. 
We may now pass to the limit as $k \to \infty$ in~\eqref{eqn:D2UkM_j}: the first sequence of measures converge due to~\eqref{eqn:hessian-convergence}, while the subtracted absolutely continuous measures converge in total variation, hence also weak-$\ast$, by the integral bound.
We therefore obtain $D^2_{M_j} (U|_{M_j}) \geq |B^n|^{\frac{1}{n}} \, g_{M_j}$ on $\cB_j \cap W$, hence $D^2_V U \geq |B^n|^{\frac{1}{n}} \, \textup{id}_{T_x V}$ for $\|V\|$-a.e.~ $x$.

On the other hand, the equality~\eqref{eqn:equality-limit-psi-theta(x)} allows us to apply Proposition~\ref{prop:measure-zeta-q} with $\lambda := n |B^n|^{\frac{1}{n}} \|V\|$ and $q := \zeta$ and obtain $n |B^n|^{\frac{1}{n}} \|V\| - \Delta_V U \, \|V\| \geq 0$, so $\Delta_V U \leq n |B^n|^{\frac{1}{n}}$ holds $\|V\|$-a.e.
Moreover, taking traces in $D^2_V U \geq |B^n|^{\frac{1}{n}} \, \textup{id}_{T_x V}$ proves the reverse bound, since $H_V(x)=0$ for $\|V\|$-a.e.~ $x$, so $\Delta_V U = n |B^n|^{\frac{1}{n}}$ for $\|V\|$-a.e.~ $x$.
Thus, the symmetric tensor $D^2_V U - |B^n|^{\frac{1}{n}} \, \textup{id}_{T_x V} \geq 0$ is positive semidefinite and has zero trace for $\|V\|$-a.e.~ $x$, so it vanishes; this proves that $D^2_V U = |B^n|^{\frac{1}{n}} \textup{id}_{T_x V}$.

    \smallskip \noindent \textbf{Step 5: The subdifferential $\partial U$.}
Finally, we show that the limiting optimal map $U$ takes the full normal disk into the Euclidean subdifferential of $U$.
To show~\eqref{eqn:subdifferential-U} at jointly good points, it suffices to prove that $\nabla^V U(x) + y \in \partial U(x)$ for every $y \in T_x^{\perp} V$ with $|y| = \sqrt{1 - |\nabla^V U(x)|^2}$: since $\partial U(x)$ is closed and convex, this will imply that $\nabla^V U(x) + \bar{B}_{\sqrt{1 - |\nabla^V U(x)|^2}}(0) \cap T_x^{\perp} V \subset \partial U(x)$.
The reverse inclusion holds because $U$ is $1$-Lipschitz, so $|\xi| \leq 1$ and $|\xi^{\perp}| \leq \sqrt{1 - |\nabla^V U(x)|^2}$ for any $\xi \in \partial U(x)$.

To prove this weaker statement, we argue by contradiction and suppose that there is a good point $x_0$ with $p_0 := \nabla^V U(x_0)$ and a $y_0 \in T_{x_0}^{\perp} V \cap \partial B_{\sqrt{1- |p_0|^2}}(0)$ with $\xi_0 := p_0 +y_0 \not\in \partial U(x_0)$, so $B_{\eta_0}(\xi_0) \cap \partial U(x_0) = \varnothing$ for $\eta_0>0$, because $\partial U(x_0)$ is closed and convex.
Now, $x_0$ is a good point, and the locally uniform convergence $U_k \to U$ of convex $1$-Lipschitz functions implies that their subdifferentials converge in the Kuratowski sense, cf.~\cite{rockafellar}*{Theorem 24.5}.
Therefore,
\begin{equation}\label{eqn:Uk-subdifferential-contradiction}
\exists \; \{ r_0 > 0, \; \eta_1 >0 , \; k_0\}, \qquad U_k(x) - h_k(\xi) - \la x, \xi \rg > 0 \quad \text{for all } \; k \geq k_0, 
\end{equation}
whenever $x \in Q_r \cap \on{spt} \|V\|$ with $r \in (0,r_0)$ and $\xi \in B_{\eta_1}(\xi_0)$.
We let $E_r := Q_r \cap \cB_j \cap \on{spt} \|V\|$ and define the sets $A_{r,k} , G_{r,k}$ as in Proposition~\ref{prop:laplacian-inequality} in terms of smooth second-order representatives $\hat{U}_k$ of $U_k|_{M_j}$ at $x_0$.
Then, the argument used therein shows that all contacts of $(U_k, h_k)$ over $E_r$ have second coordinate in $G_{r,k}$, while the assumption~\eqref{eqn:Uk-subdifferential-contradiction} implies that no contact can lie in $B_{\eta_1}(\xi_0)$.
Therefore, Lemmas~\ref{lemma:E-Sigma-perturb} and~\ref{lemma:BE-lemma-7}, with $G$ replaced by $G_{r,k} \setminus B_{\eta_1}(\xi_0)$, imply that $\|V\|(E_r) \leq \nu_k ( G_{r,k} \setminus B_{\eta_1}(\xi_0))$.

We now estimate the right-hand side.
Fix $\eta \in (0, \frac{1}{10} \eta_1)$ and let $\chi \in C^{\infty}( \bR^{n+2})$ be a non-negative function satisfying $\chi \equiv 0$ on $B_{\eta}(\xi_0)$ and $\chi \equiv 1$ on $\bR^{n+2} \setminus B_{2 \eta}(\xi_0)$, with $0 \leq \chi \leq 1$ and $\chi' \geq 0$, so
\begin{equation}\label{eqn:VEr-bound}
\|V\| (E_r) \leq \nu_k (G_{r,k} \setminus B_{\eta_1}(\xi_0)) \leq \int_{G_{r,k}} \chi(\xi) \, d \nu_k(\xi).
\end{equation}
As in Step 1 of Proposition~\ref{prop:laplacian-inequality}, we consider, for each $k$, the normal coordinate map
\[
\Psi_k(v,y) = \nabla^{M_j} \hat{U}_k (\Xi^{M_j}_{x_0} (v)) + \tau_v^{M_j}(\eta) , \qquad \text{for} \quad \tau_v^{M_j}: T_{x_0}^{\perp} V \to T_{\Xi^{M_j}_{x_0}(v)}^{\perp} M_j \quad \text{a $C^1$ map},
\]
with $\Psi_k(0,y) = p_k + y$ and $J \Psi_k(0,y) = \det D^2_V U_k(x_0)$ due to $\mathrm{I\!I}_V (x_0) =0$.
Using $H_V = 0$, $\mathrm{I\!I}_V = 0$, and $D^2_V U_k(x_0) \to |B^n|^{\frac{1}{n}} \, \textup{id}_{T_x V}$, $\nabla^V U_k(x_0) \to p_0$, we can apply the local coarea estimate from Step 1 of Proposition~\ref{prop:laplacian-inequality} with weight $\hat{\rho}_k(\xi) = \chi(\xi) \rho_k(|\xi|^2)$ to obtain, for $p_k := \nabla^V U_k(x_0)$,
\[
\limsup_{r \downarrow 0} r^{-n} \int_{G_{r,k}} \chi(\xi) \, d\nu_k(\xi) \leq \int_{S_k(x_0)} \chi(p_k + y) \det (D^2_V U_k (x_0) - \la \mathrm{I\!I}_V(x_0),y \rg ) \, \rho_k ( |p_k|^2 + |y|^2) \, dy.
\]
Because $\mathrm{I\!I}_V(x_0) = 0$, we have $S_k (x_0) = \{ |p_k|^2 + |y|^2 \leq 1\}$ and hence, using the bound~\eqref{eqn:VEr-bound} together with $\|V\|(E_r) = \theta(x_0) r^n + o(r^n)$ by Lemma~\ref{lemma:v-ae-q-gj}, we obtain
\[
1 \leq \alpha_k \det D^2_V U_k(x_0) \int \chi(p_k + y) \, d \tilde{\nu}_k(y), \qquad d \tilde{\nu}_k(y) = \alpha_k^{-1} \, \rho_k ( |p_k|^2 + |y|^2) \, \mathbf{1}_{ \{ |p_k|^2 + |y|^2 \leq 1\} } \, dy.
\]
By Lemma~\ref{lemma:converge-to-uniform-measure}, the fiber measures $\alpha_k^{-1} \rho_k (|p_k|^2 + |y|^2) \, \mathbf{1}_{ \{ |p_k|^2 + |y|^2 \leq 1 \} } \, dy$ converge to the uniform probability measure $\tilde{\nu}_{\infty}$ on the circle $\{ y \in T_{x_0}^{\perp} V : |y| = \sqrt{1 - |p_0|^2} \}$, and the set $\{ y : p_0 + y \in B_{\eta}(\xi_0) \}$ contains a non-empty open arc of this circle.
Because $\chi =0$ in a neighborhood of $\xi_0 = p_0 + y_0$, hence on such a non-empty arc, we obtain $\int \chi (p_0 + y) \, d \tilde{\nu}_{\infty}(y) < 1$.
Moreover, the convergence $\alpha_k^{-1} \to |B^n|$, $D^2_V U_k(x_0) \to |B^n|^{\frac{1}{n}} \textup{id}_{T_x V}$, and $p_k \to p_0$ implies that $\alpha_k \det D^2_V U_k(x_0) \to 1$, hence
\[
1 \leq \limsup_{k \to \infty} \alpha_k \det D^2_V U_k(x_0) \int \chi(p_k + y) \, d \tilde{\nu}_k(y) \leq \int \chi(p_0 +y) \, d \tilde{\nu}_{\infty}(y) < 1,
\]
a contradiction.
Since the data $(x_0, p_0, y_0)$ were arbitrary, the characterization~\eqref{eqn:subdifferential-U} follows.
\end{proof}

\begin{lemma}\label{lemma:zeta-limit}
    We express the singular part of the first variation measure as $\delta_s V = \zeta \, |\delta_s V|$, for $\zeta$ a $\bR^{n+2}$-valued function with $|\zeta|=1$ for $|\delta_s V|$-a.e.~ points.
    Then, $\partial U(x) = \{ \zeta(x) \}$ for $|\delta_s V|$-a.e.~ $x$.
    Moreover, for $|\delta_s V|$-a.e.~ $x_0$, any sequence of good points $\{ x_i \}$ with $x_i \to x_0$ satisfies $\nabla^V U(x_i) \to \zeta(x_0)$ and $\zeta(x_0)$ lies on every subsequential limit of the tangent spaces $T_{x_i}V$.
\end{lemma}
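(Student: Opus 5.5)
The plan is to extract the information about $\zeta$ from the limiting identity \eqref{eqn:equality-limit-psi-theta(x)}, which came out of Step 3 of Proposition~\ref{prop:reduce-to-nice-configuration}, together with the convergence $q_k \to \zeta$ in $L^1(|\delta_s V|)$ obtained there. Recall that $q_k$ was produced in the proof of Proposition~\ref{prop:fancy-integration-by-parts} as a weak-$\ast$ limit of $DU_{k,\ve}$, with $U_{k,\ve}$ mollifications of $U_k$. I first want to show that $q_k(x) \in \partial U_k(x)$ for $|\delta_s V|$-a.e.~ $x$. This should hold because weak-$\ast$ limits of gradients of mollified convex functions lie in the convex, closed, upper semicontinuous subdifferential. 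Concretely, for any nonnegative test function $\phi$ and any $z$, the subgradient inequality $U_{k,\ve}(z) \geq U_{k,\ve}(x) + \la DU_{k,\ve}(x), z-x\rg$ can be integrated against $\phi\,d|\delta_s V|$ and passed to the limit, using uniform convergence $U_{k,\ve} \to U_k$. Applying the same argument with $k \to \infty$ and $U_k \to U$ locally uniformly gives $\zeta(x) \in \partial U(x)$ for $|\delta_s V|$-a.e.~ $x$, where one takes countable dense families of $z$.

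Next I would show that $\partial U(x)$ is a singleton at such points. Since $|\zeta|=1$ and $U$ is $1$-Lipschitz, $\zeta(x)$ is an extreme point of $\bar B^{n+2}$ lying in $\partial U(x) \subset \bar B^{n+2}$. To rule out further elements of $\partial U(x)$, I would use the convexity of $U$: if $\partial U(x)$ contains $\zeta$ and some $\xi \neq \zeta$, then $U$ grows at slope exactly $1$ in direction $\zeta$ but is not differentiable. My first attempt would be to test \eqref{eqn:equality-limit-psi-theta(x)} with $\psi$ localized near $x$, comparing the contributions of $\zeta$ and of the blowup of $U$. Failing that, I would invoke the weaker conclusion $\zeta(x) \in \partial U(x)$ together with the following structure: $U(x+t\zeta) - U(x) = t$ for small $t$ along the ray, by $1$-Lipschitz plus the subgradient, and the subdifferential at interior points of such a segment is $\{\zeta\}$. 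I expect this singleton step to be the main obstacle, since genuinely the statement needs more than what the limit gives, and it may require using the Step~5 normal disk description \eqref{eqn:subdifferential-U} at nearby good points.

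For the second assertion, take good points $x_i \to x_0$. By \eqref{eqn:subdifferential-U}, $\partial U(x_i)$ contains $\nabla^V U(x_i) + y$ for all normal $y$ with $|y| = \sqrt{1-|\nabla^V U(x_i)|^2}$. By upper semicontinuity of $\partial U$ for convex functions, every limit point of such vectors lies in $\partial U(x_0) = \{\zeta(x_0)\}$. Passing to a subsequence with $T_{x_i}V \to T$ and $\nabla^V U(x_i) \to p \in T$, the normal circles converge to the circle of radius $\sqrt{1-|p|^2}$ in $T^\perp$. All of its points equal $\zeta(x_0)$, which forces this radius to be $0$. Hence $|p|=1$ and $p = \zeta(x_0)$, so $\zeta(x_0) \in T$. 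Since every subsequence has a further subsequence with this limit, $\nabla^V U(x_i) \to \zeta(x_0)$ for the full sequence.
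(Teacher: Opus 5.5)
There is a genuine gap: the singleton step. Your first step ($\zeta(x)\in\partial U(x)$ for $|\delta_s V|$-a.e.\ $x$, via $q_k(x)\in\partial U_k(x)$ and $q_k\to\zeta$) is correct. Your argument for the second assertion is also fine \emph{once} $\partial U(x_0)=\{\zeta(x_0)\}$ is known. But the singleton step is the core of the lemma, your second paragraph depends on it, and you leave it open. Neither fallback can close it:
\begin{itemize}
\item Membership of a unit vector in $\partial U(x)$ says nothing about the size of $\partial U(x)$: for $U(y)=|y-x|$, every unit vector lies in $\partial U(x)=\bar B^{n+2}$.
\item The ray argument only gives $\partial U(x+t\zeta)=\{\zeta\}$ for $t>0$, not at $t=0$.
\item The normal disks of \eqref{eqn:subdifferential-U} at nearby good points, passed to the limit through the closed graph of $\partial U$, only produce elements of $\partial U(x_0)$. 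These are lower bounds on $\partial U(x_0)$, never upper bounds.
\end{itemize}
In fact the information you carry through the $U_k$ is intrinsically too weak. The function $U_k(y)=|y-x+\zeta/k|$ has $\partial U_k(x)=\{\zeta\}$ for all $k$, while $U_k\to|\cdot-x|$ locally uniformly and $\partial U(x)$ is the whole ball.

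The missing idea is to mollify $U$ itself, identify $DU_\ve(x)$ pointwise as an \emph{average} over $\partial U(x)$, and then use strict convexity of the Euclidean ball. The argument runs as follows.
\begin{enumerate}
\item Compare the integration by parts for $U_\ve$ (Propositions~\ref{prop:fancy-integration-by-parts} and~\ref{prop:measure-zeta-q}) with \eqref{eqn:equality-limit-psi-theta(x)} and $\Delta_V U=n|B^n|^{\frac1n}$. This shows that any weak-$\ast$ limit $q$ of $DU_{\ve_k}$ in $L^\infty(|\delta_s V|)$ satisfies $\la q,\zeta\rg=1$, hence $q=\zeta$.
\item Let $h_x$ be the support function of $\partial U(x)$. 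The blow-ups $(U(x+ry)-U(x))/r$ converge locally uniformly to $h_x$, so for \emph{every} $x$
\[
DU_\ve(x)=\int\frac{U(x-\ve y)-U(x)}{\ve}\,D\chi(y)\,dy\longrightarrow\int \nabla h_x(-y)\,\chi(y)\,dy ,
\]
with $\nabla h_x(-y)\in\partial U(x)\subset\bar B^{n+2}$ for a.e.\ $y$.
\item By dominated convergence, this average equals $\zeta(x)$ for $|\delta_sV|$-a.e.\ $x$.
\item An average of vectors in the closed unit ball that has norm one forces $\nabla h_x\equiv\zeta(x)$ a.e.\ on a ball around $0$ where $\chi>0$.
\item By $1$-homogeneity, $h_x=\la\zeta(x),\cdot\rg$, i.e.\ $\partial U(x)=\{\zeta(x)\}$.
\end{enumerate}
With this in hand, your normal-circle argument yields the second assertion. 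The paper's shorter route also works: use only $\nabla^V U(x_i)\in\partial U(x_i)$ and the closed graph of $\partial U$.
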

\begin{proof}
Mollifying $U_{\ve} := U \ast \chi_{\ve}$, the argument used in Proposition~\ref{prop:measure-zeta-q} and in Step 3 of Proposition~\ref{prop:reduce-to-nice-configuration} gives, after passing to a subsequence $\ve_k \downarrow 0$, $DU_{\ve_k} \xrightharpoonup{*} q \in L^{\infty}_{\textup{loc}} ( |\delta_s V|; \bR^{n+2})$ and $q = \zeta$ $|\delta_s V|$-a.e.
For a fixed point $x$, let $h_x(y) := \sup_{\xi \in \partial U(x)} \la \xi, y \rg$ be the support function of $\partial U(x) \subset \bar{B}_1^{n+2}$.
Because $U$ is convex and $1$-Lipschitz, the function $h_x$ is Lipschitz, hence differentiable a.e., with $\nabla h_x(-y) \in \partial U(x)$ for a.e.~ $y$.
Moreover, the rescaled functions $U_{x,r}(y) := \frac{U(x+ry) - U(x)}{r}$ are convex, $1$-Lipschitz, and satisfy $U_{x,r} \to h_x$ locally uniformly as $r \downarrow 0$, while $D U_{\ve}(x) = \int_{\bR^{n+2}} \frac{U(x-\ve y) - U(x)}{\ve} D \chi(y) \, dy$, with integrand converging uniformly on $\on{spt} \chi$ to $h_x(-y) D \chi(y)$.
Therefore,
\[
DU_{\ve}(x) \to \int_{\bR^{n+2}} h_x(-y) D \chi(y) \, dy = D (h_x \ast \chi)(0) = \int_{\bR^{n+2}} \nabla h_x(-y) \chi(y) \, dy =: S(x) \in \partial U(x).
\]
Since $|DU_{\ve}| \leq 1$, the dominated convergence theorem gives $DU_{\ve} \to S(x)$ strongly in $L^1(K , |\delta_s V|)$, for every compact $K \Subset \Omega$, so $q = \zeta = S$ for $|\delta_sV|$-a.e.~ $x$.
The convolution formula for $S(x)$ leads to
\[
1 = |\zeta(x)| =   \Bigl| \int_{\bR^{n+2}} \nabla h_x(-y) \chi(y) \, dy \Bigr| \leq \int_{\bR^{n+2}} |\nabla h_x(-y)| \chi(y) \, dy \leq 1
\]
due to $\nabla h_x(-y) \in \partial U(x) \in \bar{B}_1$ for a.e.~ $y$.
Thus, equality holds in both steps and $\nabla h_x(-y) = \zeta(x)$ for $\chi(y) \, dy$-a.e.~ $y$, so $\nabla h_x(y) = \zeta(x)$ for a.e.~ $y$ in a ball $B_{\rho}(0)$ with $\chi>0$.
Because $h_x$ is Lipschitz and $1$-homogeneous, this implies $h_x(y) = \la \zeta(x), y \rg$ for $y \in \bR^{n+2}$.
Moreover, any $\xi \in \partial U(x)$ has $\la \xi, y \rg \leq h_x(y) = \la \zeta(x) , y \rg$ for every $y$, so $\xi = \zeta(x)$ and $\partial U(x) = \{ \zeta(x) \}$ for $|\delta_s V|$-a.e.~ $x$.

For the second assertion, we use $|\nabla^V U| \leq 1$ at good points and the compactness of the Grassmannian $\bG(n+2,n)$ to extract a subsequence with $\nabla^V U(x_i) \to p$ and $T_{x_i} V \to P$ for some $P \in \bG(n+2,n)$ and a vector $p \in P$.
At every good point, $\nabla^V U(x_i) \in \partial U(x_i)$, and because the graph of the subdifferential of a convex function is closed, we obtain $p \in \partial U(x_0) = \{ \zeta(x_0)\}$.
Therefore, $p = \zeta(x_0)$ is the unique subsequential limit, and $\nabla^V U(x_i) \to \zeta(x_0) \in P$.
This completes the proof.
\end{proof}

With these properties at hand, we complete the proof of Corollary~\ref{cor:sharp-isoperimetric}.
\begin{proof}[Proof of Corollary~\ref{cor:sharp-isoperimetric} (i)]
    Suppose, additionally, that the tangent plane map $P :x \mapsto T_x V \in \bG(n+2,n)$ is weakly differentiable on $V$.
    In what follows, we abbreviate $c_n := |B^n|^{\frac{1}{n}}$ and write $V \mathbf{D} P$ for the weak varifold derivative as in~\cite{weakly-differentiable-menne}*{Definition 8.3}.
    By Menne's weak differentiability theorem~\cite{weakly-differentiable-menne}*{Theorem 11.2}, this expression is equal to the approximate derivative $V \, \textup{ap} \, DP$.
    On each $C^2$ sheet $M_j$ representing $V$, the map $P_{M_j}(x) = T_x M_j$ satisfies, for each $u \in T_x M_j$,
    \[
    (D^{M_j} P_{M_j}(x)[u] ) v = \mathrm{I\!I}_{M_j} (x) (u,v) \; \text{ for } \; v \in T_x M_j, \quad (D^{M_j} P_{M_j}(x) [u]) \eta = A_{\eta}(x) u \; \text{ for } \; \eta \in T_x^{\perp} M_j.
    \]
    Here, $A_{\eta}$ denotes the shape operator, defined by $\la A_{\eta} u, v \rg = \la \mathrm{I\!I}_{M_j}(u,v), \eta \rg$ for $u,v \in T_x M_j$.
    Therefore, taking $\{ e_1, \dots, e_n \}$ to be an orthonormal basis for $T_x M_j$, we obtain
    \[
    |D^{M_j}P_{M_j}(x) [u]|^2 = 2 \sum_{i=1}^n |\mathrm{I\!I}_{M_j}(x) (u,e_i)|^2, \qquad \text{and hence} \quad |D^{M_j} P_{M_j}(x)|^2 = 2 \, |\mathrm{I\!I}_{M_j}(x)|^2.
    \]
    Thus, for $\|V\|$-a.e.~ good point $x_0$, which is a density point of $\cB_j \subset M_j$ with $T_{x_0} V = T_{x_0} M_j$ and $\mathrm{I\!I}_{M_j}(x_0) = \mathrm{I\!I}_V(x_0) = 0$, we have $P_{M_j}(x) = P_{M_j}(x_0) + o ( |x - x_0|)$ as $x \to x_0$ on $M_j$.
    On the density-one set $\cB_j$, we have $P(x) = P_{M_j}(x)$ for $\|V\|$-a.e.~ $x$, hence for every $\ve>0$, we obtain
    \[
    \Theta^n \bigl( \|V\| \mres \{ x : |P(x) - P(x_0)| > \ve |x - x_0| \}, x_0 \bigr) = 0.
    \]
    This property implies that $V \, \textup{ap} \, DP = 0$ for $\|V\|$-a.e.~ such $x_0$, hence $V \mathbf{D} P = 0$ $\|V\|$-a.e.
By Menne's constancy result~\cite{weakly-differentiable-menne}*{Theorem 8.33} and the fact that $V$ is indecomposable, due to Lemma~\ref{lemma:indecomposable}, we obtain $P(x) = \Pi$ for $\|V\|$-a.e.~ $x$ and a fixed $n$-plane $\Pi$.

For any vector $e \in \Pi^{\perp}$, we have $\nabla^V \la x, e \rg = \pi_{T_x V} e = \pi_{\Pi} e = 0$ for $\|V\|$-a.e.~ $x$, so the constancy theorem and the indecomposability of $V$ imply that $\la x,e \rg$ is constant $\|V\|$-a.e.
Because this holds for every $e \in \Pi^{\perp}$, we obtain $\textup{spt} \,\|V\| \subset a + \Pi$ up to $\|V\|$-null sets, for some fixed $a \in \bR^{n+2}$.
Since $\Theta^n (\|V\|,x)=1$ a.e., we have $V = \cH^n \mres E$ for a measurable set $E \subset a + \Pi \simeq \bR^n$, which is a Caccioppoli set because $\delta V$ is a finite Radon measure with vanishing absolutely continuous mean curvature.
Moreover, $\|V\| (\bR^{n+2}) = \cH^n(E) = 1$ and $|\delta_s V| = |D \chi_E|$ with $|\delta_s V|( \bR^{n+2}) = \cH^{n-1}(\partial^*E) = n c_n$.

Now, let $u := U|_{\Pi}$, which is a convex $1$-Lipschitz function with $\nabla^{\Pi} u$ a BV vector field whose derivative is the Hessian measure $D^2 u$.
By convexity, $D^2 u$ is a positive semidefinite matrix-valued measure.
On $E$, we have $\nabla^V U = \nabla^{\Pi} u$ $\cH^n \mres E$-a.e., so the identity $D^2_V U = c_n \textup{id}_{T_x V}$ implies that $D^2 u = c_n \textup{id}_{\Pi} + (D^2 u)_s$, for $(D^2 u)_s \geq 0$ a non-negative singular measure.
Using~\eqref{eqn:equality-limit-psi-theta(x)}, we find
\[
\textup{div} ( \chi_E \nabla^{\Pi} u) = n c_n \chi_E \, \cL^n - |\delta_s V|, \qquad |\delta_s V| = |D \chi_E|
\]
as distributions.
Restricting to the measure-theoretic interior $E^{(1)}$ and using the fact that $|\delta_s V| = |D \chi_E|$ is concentrated on the measure-theoretic boundary, we obtain $\Delta u \mres E^{(1)} = n c_n \cL^n \mres E$, and hence $(D^2 u)_s = 0$ on $E^{(1)}$.
Thus, $D^2 u \mres E^{(1)} = c_n \textup{id}_{\Pi} \, \cL^n \mres E$ and $D^2_V U = V \mathbf{D} (\nabla^V U)$ agrees with the varifold weak derivative, so $D^2_V U = c_n \, \textup{id}_{T_x V}$ implies $V \mathbf{D} ( \nabla^V U - c_n x) = 0$ and $\nabla^V U = c_n( x - p)$ for some $p \in \bR^{n+2}$, by~\cite{weakly-differentiable-menne}*{Theorem 8.33}.
Since $|\nabla^V U|<1$ for $\|V\|$-a.e.~ $x$, we find $|x-p| < c_n^{-1}$, so $\bar{E} = \textup{spt} \, \|V\| \subset \bar{B}^{\Pi}_{c_n^{-1}}(p)$ up to an $\cH^n$-null set.
Also, $1 = \cH^n(E) = \cH^n( B^{\Pi}_{c_n^{-1}}(p))$, so $E = B^{\Pi}_{c^{-1}_n}(p)$ up to an $\cH^n$-null set and $V = |B^{\Pi}_{c_n^{-1}}(p)|$ is a flat round $n$-disk.
This completes the proof.
\end{proof}

\begin{proof}[Proof of Corollary~\ref{cor:sharp-isoperimetric} (ii)]
Let $\Sigma := \textup{sing} \, V$ denote the set of points at which $V$ is not locally a smooth multiplicity-one manifold.
Let $\cR := \textup{spt} \, \|V\| \setminus \Sigma$ denote the regular set of $V$, which is open, and write $\cG \subset \cR$ for the set of good points that satisfy the properties of Proposition~\ref{prop:reduce-to-nice-configuration} and are density points for $\cR$, with $\|V\|(\cG) = \|V\|(\cR)$.
For each $x_0 \not\in \Sigma$, there exists a $\rho_0>0$ such that $V = |M_0|$ on $B_{\rho_0}(x_0)$, for $M_0$ a smooth graph.
Because $\mathrm{I\!I}_V = 0$ a.e., this implies that each connected component of $\textup{spt} \, \|V\| \cap B_{\rho_0/2}(x_0)$ is an open subset of an affine $n$-plane $a+\Pi$, hence $\| \delta V\| ( B_{\rho_0/2}(x_0)) = 0$ and $x_0 \not\in \textup{spt} \, \| \delta V\|$.
Moreover, the argument of Corollary~\ref{cor:sharp-isoperimetric}$\,(i)$ shows that $D_{\Pi}( \nabla^{\Pi} (U|_{\Pi}) - c_n x) = 0$ on each such plane, hence $\nabla^V U = \nabla^{\Pi} U = c_n (x - p_{\Pi})$ in $B_{\rho_0}(x_0)$ for some $p_{\Pi} \in a+ \Pi$.
Combined with the condition $\cH^{n-1} \mres \Sigma \ll \| \delta V\|$  and the fact that $H_V = 0$ a.e., this implies that $\cH^{n-1} ( \Sigma \setminus \textup{spt} \, |\delta_s V|) = 0$.
Because $\| \delta V\| = n c_n $, with $c_n = |B^n|^{\frac{1}{n}}$ and $\|V\| (\bR^{n+2}) = 1$, we obtain $\cH^{n-1}(\Sigma) < \infty$ and $\|V\| (\Sigma) = \cH^n(\Sigma) = 0$.
Therefore, $\|V\|(\cG) = \|V\|( \bR^{n+2})=1$ and the connected components $M_j$ of $\cR$ cover $V$ up to $\|V\|$-measure zero.

The set $\cR$ is a second-countable, locally connected smooth $n$-manifold, so its connected components $\{ M_j \}$ form a countable family and each $M_j \subset \Pi_j$ is contained in an $n$-plane, with $\nabla^V U = c_n (x - p_j)$ for some $p_j \in \Pi_j$ and each $x \in M_j$.
Because $|\nabla^V U| <1$ for $\|V\|$-a.e.~ $x$, we have $M_j \subset \Pi_j \cap B_{c_n^{-1}}(p_j) =: B^{\Pi_j}_{c_n^{-1}} (p_j)$ as sets.
Moreover, any $x \in \partial_{\Pi_j} M_j$ has $x \in \bar{M}_j \subset \textup{spt} \, \|V\|$, so $x \not\in \cR$, else there would exist some $\rho>0$ with $B_{\rho}(x) \subset M_j$, contradicting the fact that $x$ is a boundary point.
Thus, $\cH^{n-1} ( \partial_{\Pi_j} M_j) \leq \cH^{n-1} (\Sigma)< \infty$, and since the essential boundary satisfies $\partial^e_{\Pi_j} M_j \subset \partial_{\Pi_j} M_j$, Federer's criterion~\cite{federer-gmt}*{4.5.11} implies that $M_j$ is a Caccioppoli set in $\Pi_j$.

We now define $\sigma_j := \cH^{n-1} \mres \partial^*_{\Pi_j} M_j$.
Because $\partial^*_{\Pi_j} M_j \subset \Sigma$, the assumed absolute continuity gives $\sigma_j \ll \cH^{n-1} \mres \Sigma \ll |\delta_s V|$.
We fix a point $x \in \partial^*_{\Pi_j} M_j$ outside a $\sigma_j$ null set, such that $|\zeta|=1$ Lemma~\ref{lemma:zeta-limit} applies.
Because the good points have full measure in $M_j$, we can take a sequence of good points $\{ x_k \} $ in $M_j$ with $x_k \to x$, so Lemma~\ref{lemma:zeta-limit} implies that $c_n (x_k - p_j) = \nabla^V U(x_k) \to \zeta(x)$.
Thus, $\zeta(x) = c_n (x - p_j)$ and $c_n |x-p_j|=1$.
Since $M_j \subset B^{\Pi_j}_{c_n^{-1}}(p_j)$ and $x$ lies on the boundary sphere, while $M_j$ is a Caccioppoli set, the reduced blow-up of $M_j$ at $x$ is a half-space through the origin contained in the tangent half-space of the ball, by De Giorgi's blow-up theorem~\cite{maggi}*{Theorem 15.5}.
Both half-spaces have density $\frac{1}{2}$, so because $M_j \subset B^{\Pi_j}_{c_n^{-1}}(p_j)$, they coincide and the outward conormal $\nu_j$ of $M_j$ agrees with the outward conormal of the ball.
This means that
\begin{equation}\label{eqn:nu-j(x)}
\nu_j(x) = \tfrac{x - p_j}{|x - p_j|} = c_n (x - p_j) = \zeta(x), \qquad \text{for } \; \sigma_j\text{-a.e.~ } \; x.
\end{equation}
Therefore, $\la \nabla^{\Pi_j} U, \nu_j \rg = 1$ at $\cH^{n-1}$-a.e.~ reduced boundary point of every regular component $M_j$.
Thus, applying Green's theorem in each $\Pi_j$ for a non-negative $\varphi \in C_c^1( \bR^{n+2})$, we obtain
\begin{align*}
    \int_{\partial^*_{\Pi_j} M_j} \varphi \, d \cH^{n-1} &= \int_{\partial^*_{\Pi_j} M_j} \varphi \la \nabla^V U, \nu_j \rg \, d \cH^{n-1} = \int_{M_j} \textup{div}_{\Pi_j} ( \varphi \nabla^{\Pi_j} U ) \, d \cH^n \\
    &= \int_{M_j} ( \la \nabla^{\Pi_j} \varphi, \nabla^{\Pi_j} U \rg + n c_n \varphi ) \, d \cH^n.
\end{align*}
In the last step, we used the fact that $\Delta_V U = n c_n$.
Summing over the regular components $M_j$, we observe that the right-hand side converges absolutely, because $M_j$ are pairwise disjoint up to $\cH^n$-null sets and $|\la \nabla^{\Pi_j} \varphi, \nabla^{\Pi_j} U \rg| \leq |\nabla \varphi| \leq C_{\varphi}$.
Because $\varphi \geq 0$, the left-hand side is monotone as more components are included.
Thus, the two series are absolutely summable and
\[
\sum_j \int_{\partial^*_{\Pi_j} M_j} \varphi \, d \cH^{n-1} = \int ( \la \nabla^V \varphi, \nabla^VU \rg + n c_n \varphi) \, d \|V\| = \int \varphi \, d |\delta_s V|
\]
by applying the identity~\eqref{eqn:equality-limit-psi-theta(x)}.
Therefore, $|\delta_s V| = \sum_j \sigma_j$ as Radon measures.
We show that the $\sigma_i$ are pairwise mutually singular: if $\cH^{n-1} ( \partial^*_{\Pi_i} M_i \cap \partial^*_{M_j} M_j) > 0$ for some $i \neq j$, then at a.e.~ point $x$ of this intersection, the reduced boundaries have the same approximate tangent $(n-1)$-space $H$.
Then,~\eqref{eqn:nu-j(x)} implies that the outward conormals $\nu_i = \nu_j = \zeta$ agree, so $T \Pi_i = H \oplus \bR \zeta = T \Pi_j$ shows that the ambient planes agree.
Both affine planes pass through $x$, so $\Pi_i = \Pi_j$, and since $\nu_i = \nu_j$, the reduced blowups of both $M_i, M_j$ are the same half-space $\{ y \in \Pi_i : \la y, \zeta(x) \rg < 0 \}$.
Therefore,
\[
\cH^n( M_i \cap M_j \cap B_r(x)) = \tfrac{1}{2} |B^n| r^n + o(r^n) \qquad \text{as } \; r \downarrow 0,
\]
contradicting $\cH^n (M_i \cap M_j) = \varnothing$.
We conclude that $\sigma_i \perp \sigma_j$ for $i \neq j$.
Since the countable family $\{ \sigma_j \}$ is mutually singular and $\sum_j \sigma_j = |\delta V|$, there are pairwise disjoint Borel sets $\cB_j \subset \Sigma$ satisfying $\sigma_j = |\delta_s V| \mres \cB_j$ and $|\delta V| ( \Sigma \setminus \bigcup_j \cB_j) = 0$.
We define $E_j := M_j \cup \cB_j$, so $\|V\|(\Sigma) = 0$ implies that the varifolds $V_j := \cH^n \mres M_j$ have first variation $\delta V_j = \nu_j \sigma_j = \zeta \sigma_j$ and
\[
V \mres (E_j \times \bG(n+2,n) ) = V_j, \qquad ( \delta V) \mres E_j = \zeta \, |\delta V| \mres \cB_j = \zeta \sigma_j = \delta V_j.
\]
Thus, with the definition~\cite{weakly-differentiable-menne}*{5.1} of Menne, $V \partial E := ( \delta V) \mres E - \delta \bigl( V \mres ( E \times \bG(n+2,n)) \bigr)$, we obtain $V \partial E_j = 0$ for every $j$.
The varifolds $V_j$ produce a decomposition of $V$ in the sense of~\cite{weakly-differentiable-menne}*{Definition 6.2}.
Applying Lemma~\ref{lemma:indecomposable}, $V$ is indecomposable, so there is exactly one index $k$ with $\cH^n (M_k) > 0$.
Therefore, $V = \cH^n \mres M_k$ with $\partial^* M_k \subset \partial^* B_{c_n^{-1}}^{\Pi_k}(p_k)$ and $M \subset B_{c_n^{-1}}^{\Pi_k}(p_k)$.
Since $1 = \cH^n(M_k) = \cH^n ( B^{\Pi_k}_{c_n^{-1}}(p_k))$ and $n c_n = \cH^n(\partial^*_{\Pi_k} M_k) = \cH^n ( \partial B_{c_n^{-1}}^{\Pi_k}(p_k))$, we conclude that $M_k = B^{\Pi_k}_{c_n^{-1}}(p_k)$ up to an $\cH^n$-null set, so $V = |M_k|$ is a flat round $n$-disk.
\end{proof}

\bibliography{ref}

\end{document}